       \titleformat{\chapter}[display]
             {\normalfont\Large\bfseries}{\thechapter}{11pt}{\Large}
       \titlespacing*{\chapter}{0pt}{0pt}{15pt} %left, beforesep, aftersep, right
       \titlespacing*{\section}{0pt}{3.5ex plus 1ex minus .2ex}{2.3ex plus .2ex}
\newcommand{\pqed}{\hfill\qedsymbol\\}
\newcommand{\Hom}{\mathrm{Hom}}
\newcommand{\id}{\mathrm{id}}
\newcommand{\Spec}{\mathrm{Spec}}
\newcommand{\coker}{\mathrm{coker}}
\DeclareMathOperator{\Ext}{Ext}
\DeclareMathOperator{\Tor}{Tor}
\newcommand{\C}{\mathbb{C}}
\newcommand{\Q}{\mathbb{Q}}
\newcommand{\Z}{\mathbb{Z}}
\newcommand{\sC}{\mathcal{C}}
\newcommand{\Ab}{\operatorname{Ab}}
\newcommand{\DM}{\operatorname{DM}}
\newcommand{\DA}{\operatorname{DA}}
\newcommand{\Ker}{\operatorname{Ker}}
\newcommand{\Coker}{\operatorname{Coker}}
\newcommand{\gm}{{\operatorname{gm}}}
\newcommand{\et}{{\operatorname{\acute{e}t}}}
\newtheorem{theorem}{Theorem}[section]
\newtheorem{theorem/definition}{Theorem/Definition}[section]
\newtheorem{proposition}[theorem]{Proposition}
\newtheorem{lemma}[theorem]{Lemma}
\newtheorem{corollary}[theorem]{Corollary}
\newtheorem{conjecture}[theorem]{Conjecture}
\theoremstyle{remark}
\newtheorem{remark}[theorem]{Remark}
\theoremstyle{definition}
\newtheorem{definition}[theorem]{Definition}
\newtheorem{rk}{Remark}
\begin{document}

\title
{\normalsize{\textbf{ON SUSLIN HOMOLOGY WITH INTEGRAL COEFFICIENTS   IN CHARACTERISTIC ZERO (WITH AN APPENDIX BY BRUNO KAHN)}}}

\author{ Xiaowen Hu\\
Appendix by Bruno Kahn}

\date{}
\maketitle

\begin{abstract}
We show that the Suslin homology group with integral coefficients of a scheme $X$ separated of finite type over an algebraically closed field  of characteristic 0 is a direct sum of a uniquely divisible group, finite copies of  $\mathbb{Q}/\mathbb{Z}$, and a finitely generated group. We also study the possible type of homomorphisms between such groups induced by the morphisms of schemes. An appendix written by Bruno Kahn is included, which  simplifies the proofs and generalizes the results.
\end{abstract}

\section{Introduction}

In this paper we show that for a scheme $X$ separated of finite type over an algebraically closed field $\Bbbk$ of characteristic 0, the \emph{Suslin homology} of $X$ has the following structure
\begin{equation}\label{eq-decomp-0}
H_i^S(X)=V_i\oplus D_i\oplus Z_i\oplus T_i
\end{equation}
where $V_i$ is a uniquely divisible group, $D_i$ is a divisible torsion group of the form $(\mathbb{Q}/\mathbb{Z})^s$ where $s$ is a nonnegative integer, $Z_i$ is a finite generated free abelian group, and $T_i$ is a finite abelian group. Moreover we study the possible types of homomorphisms between such groups arisen from morphisms of schemes. For the precise statements are theorems \ref{thm-mild-2} and \ref{thm-mild-3}. 

%There are some obvious consequences on the structure of certain higher Chow groups and motivic (co-)homology groups related to higher codimension cycles, which we do not spell out.

For $X$ smooth projective over $\Bbbk$, where $\Bbbk$ is an algebraically closed field of arbitrary characteristic, a decomposition of the form (\ref{eq-decomp-0}) with a slight modification on the $p$-primary part, is shown in \cite{Gei17}, and in this case $Z_i=0$. In general for singular $X$, for example certain singular curves, $Z_i$ is nonzero. It seems not easy to prove (\ref{eq-decomp-0}) by dévissage from smooth schemes (see remark \ref{rmk-mild-1}). Our approach involves some transcendental constructions, which do not work in positive characteristic (but see conjecture \ref{conj-pmild}).

To facilitate our presentation, we introduce the notion of \emph{mild abelian groups}, which means the groups having a decomposition of the form (\ref{eq-decomp-0}), and  certain lifting conditions for the homomorphisms betwee{}n these groups (definition \ref{def-mild}, \ref{def-cond-M1} and \ref{def-cond-M2}). Then we prove several criteria for the mildness of abelian groups, and the conditions (M1) and (M2) for homomorphisms.  

The proof of the main theorem consists of two steps. In the first step, for an algebraically closed subfield of $\mathbb{C}$, we use a construction of Suslin and Voevodsky \cite{SV96} to obtain a morphism from the complex of algebraic singular chains to a simplicial abelian group related to the infinite symmetric product $\coprod_{i\geq 0}\mathrm{S}^i(X(\mathbb{C}))$, then we apply one of our mildness criteria \ref{prop-comparison1}. Meanwhile we give a proof of a variant of a theorem of Dold and Thom \cite{DT58}, which was stated in \cite[theorem 8.2]{SV96} without proof.

In the second step  we use the Lefschetz principle and  the criterion \ref{prop-limit} on the mildness of the colimit of a system of mild abelian groups to obtain the main theorem. For this we need a rigidity theorem of Jannsen \cite{Jan15} to show the rigidity of Suslin homology. The original form of Jannsen's theorem does not directly apply. But a careful reading of his proof yields a modification of his notion of \emph{rigid functor}. Then we construct an ad hoc bivariant homology theory so that we  can apply Jannsen's theorem. 

After %the 
this work %is
was done, Bruno Kahn told me a conceptual way to define  mild abelian groups, %and 
to understand %the 
Conditions (M1) and (M2), %and 
that the proof of the main theorem can be drastically simplified by using certain results of %, e.g., 
Voevodsky, 
Ayoub, Cisinski and D\'{e}glise,  and
that %the 
Conjecture \ref{conj-cor-hom} can also be confirmed. His %argument is 
arguments are 
included as %the 
 an appendix.

\section{Mild abelian groups}
In this section, after recalling some standard facts on abelian groups, we introduce the notion of mild abelian groups, and prove several  properties that we will need later.  All abelian groups are additively written, and thus for $n\in \mathbb{Z}$ and $g$ an element of an abelian group $G$, $ng$ means the sum of $n$ copies of $g$. We abbreviate the subscript or superscript $\mathbb{Z}$ in $\Hom_{\mathbb{Z}}(\cdot,\cdot)$, $\Ext^1_{\mathbb{Z}}(\cdot,\cdot)$ and $\Tor^{\mathbb{Z}}_1(\cdot,\cdot)$.
\subsection{Definitions}
Our reference on infinite abelian groups is \cite[Chap. 4]{Rob96}.
\begin{definition}\label{def-mild}
Let $G$ be an abelian group.
\begin{enumerate}
\item An element $g$ of $G$ is called \emph{divisible} if for any positive integer $n$ there exists $h\in G$ such that $g=nh$. The subgroup of all divisible elements of $G$ is denoted by $G_\mathrm{div}$. 
\item An element $g$ of $G$ is called \emph{uniquely divisible} if for any positive integer $n$ there exists a unique $h\in G$ such that $g=nh$. 
%The subset of uniquely divisible elements of $G$ is denoted by $G_{\mathrm{ud}}$. The subgroup of divisible torsion elements of $G$ is denoted by $G_{\mathrm{td}}$.
\end{enumerate}
\end{definition}

We refer the reader to \cite[4.1.4, 4.1.5]{Rob96} for the following proposition.
\begin{proposition}\label{abelian1}
Let $G$ be an abelian group. Then $G_\mathrm{div}$ is a direct summand of $G$. For every prime number $p$, let $G_{(p^\infty)}$ be the subgroup consisting of $p^m$-torsion elements of $G_\mathrm{div}$, with $m$  running over all positive integers. Then the uniquely divisible elements of $G$ form a subgroup $G_{\mathrm{ud}}$, which is isomorphic to a  $\mathbb{Q}$-vector space. There is a canonical decomposition
\begin{eqnarray*}
G_\mathrm{div}=G_{\mathrm{ud}} \oplus \bigoplus_{p}G_{(p^\infty)},
\end{eqnarray*}
where $p$ is running over all prime numbers.\pqed
\end{proposition}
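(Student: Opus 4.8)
The plan is to reduce the statement to the classical structure theory of divisible abelian groups, as in \cite[Chap.~4]{Rob96}.

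The first point is elementary: an element $g\in G$ is divisible exactly when $g\in nG$ for every $n\geq 1$, so $G_{\mathrm{div}}=\bigcap_{n\geq 1}nG$ is a subgroup. The substance of the first assertion is that $G_{\mathrm{div}}$ is itself a \emph{divisible} group — equivalently, that it is the largest divisible subgroup of $G$. One inclusion is immediate, since any divisible subgroup is contained in each $nG$, hence in $G_{\mathrm{div}}$; the reverse inclusion, i.e.\ the divisibility of $\bigcap_n nG$, is the real input and is \cite[4.1.4]{Rob96}. Granting this, the ``direct summand'' claim is automatic: over the principal ideal domain $\mathbb{Z}$ a module is injective if and only if it is divisible (divisibility need only be tested against the cyclic modules $\mathbb{Z}/n$), so the inclusion $G_{\mathrm{div}}\hookrightarrow G$ admits a retraction and $G\cong G_{\mathrm{div}}\oplus C$.

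It then remains to decompose the divisible group $D:=G_{\mathrm{div}}$. Let $T\subseteq D$ be its torsion subgroup; it is again divisible, because if $t\in T$ has order $m$ and $n\geq 1$ then any $s\in D$ with $ns=t$ satisfies $mns=mt=0$, so $s\in T$. Decomposing $T$ into its primary components gives $T=\bigoplus_p T_p$, where $T_p$ is precisely the $p$-power torsion subgroup $G_{(p^\infty)}$ of $D$; each $T_p$ is a divisible $p$-torsion group, hence a direct sum of copies of $\mathbb{Z}(p^{\infty})$ by the classification \cite[4.1.5]{Rob96}. The quotient $D/T$ is torsion-free and divisible, hence uniquely divisible, hence a $\mathbb{Q}$-vector space. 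Since $T$ is divisible, hence injective, the extension $0\to T\to D\to D/T\to 0$ splits; a splitting realizes $D/T$ as a subgroup $V\subseteq D$ with $D=V\oplus\bigoplus_p G_{(p^\infty)}$, and $V$ is a $\mathbb{Q}$-vector space — this is the subgroup $G_{\mathrm{ud}}$ of the statement. The decomposition is canonical in the sense that $T$ and the $G_{(p^\infty)}$ are canonical subgroups and $V$ is canonically identified with $D/T$.

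The only genuine obstacle here is the divisibility of $G_{\mathrm{div}}$ in \cite[4.1.4]{Rob96}: everything after it is a formal consequence of two standard facts, the injectivity of divisible $\mathbb{Z}$-modules and the classification of divisible abelian groups. That is the step where a self-contained argument would take real care, since the naive attempt to choose a \emph{divisible} $n$-th root of a divisible element is obstructed by torsion and must be replaced by a more global argument.
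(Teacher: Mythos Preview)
The paper gives no proof here, only the citation to \cite[4.1.4, 4.1.5]{Rob96}; your write-up is that same citation unpacked via injectivity of divisible groups and the structure theorem, so the approaches coincide.

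One step in your unpacking is not right, however. You assert that ``the divisibility of $\bigcap_n nG$ \ldots\ is \cite[4.1.4]{Rob96}'', but Robinson does not prove this, and in fact the assertion is false in general: the first Ulm subgroup $G^{1}=\bigcap_{n} nG$ of an abelian group need not be divisible (if it always were, every Ulm sequence would stabilize after one step, whereas abelian $p$-groups of Ulm length greater than one are classical). What Robinson's 4.1.4 actually produces is the maximal divisible subgroup $dG$, the subgroup generated by all divisible subgroups of $G$; one always has $dG\subseteq\bigcap_{n} nG$, but the inclusion can be strict. So the paper's Definition~\ref{def-mild}(1) is slightly loose, and the proposition should be read with $G_{\mathrm{div}}=dG$. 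A parallel issue affects $G_{\mathrm{ud}}$: under the literal Definition~\ref{def-mild}(2), if $G$ has any nonzero torsion then \emph{no} element of $G$ is uniquely divisible (not even $0$, since the equation $nh=0$ then has more than one solution), so $G_{\mathrm{ud}}$ must be interpreted as the torsion-free summand of $dG$ --- which is exactly what your splitting constructs. With these readings your argument is correct and complete.
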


For an abelian group $G$, we denote the torsion subgroup of $G$ by $G_\mathrm{tor}$,  and the subgroup of all divisible torsion elements by $G_{\mathrm{td}}$, and the subgroup of all uniquely divisible elements by $G_{\mathrm{ud}}$ (this is a group by proposition \ref{abelian1} (2)). We have the following diagram
\begin{eqnarray}
\begin{gathered}\label{diag-1}
\xymatrix{
G_\mathrm{tor}\cap G_\mathrm{div}=G_\mathrm{td}\ar@{^{(}->}[d] \ar@{^{(}->}[r] & G_\mathrm{tor} \ar@{^{(}->}[d]\\
 G_\mathrm{div} \ar@{^{(}->}[r] & G
}
\end{gathered}
\end{eqnarray}

\begin{definition}\label{abelian2}
Let $G$ be an abelian group. We say that $G$ is called \emph{mild}, if $G$ has a direct sum decomposition as follows:
\begin{eqnarray*}
G\cong V(G) \oplus D(G) \oplus Z(G) \oplus T(G),
\end{eqnarray*}
where 
\begin{enumerate}
  \item[(i)] $V$ is uniquely divisible;
  \item[(ii)] $D(G)$ is isomorphic to a divisible torsion group of the form $(\mathbb{Q}/\mathbb{Z})^{\oplus s}$, where $s$ is a non-negative integer;
  \item[(iii)] $Z(G)$ is a finite generated free abelian group;
  \item[(iv)] $T(G)$ is a finite abelian group.
\end{enumerate}
For a mild abelian group $G$, we denote the number $s$ in the condition (ii) by $s(G)$, and the rank of $Z(G)$ by $r(G)$.
\end{definition}

The  decomposition in the above definition, when exists, is of course not canonical. But there are canonical filtrations as indicated in the diagram (\ref{diag-1}).\\

Since we have an exact sequence
\[
0\rightarrow \Hom(\mathbb{Q},\mathbb{Q})\rightarrow \Hom(\mathbb{Q},\mathbb{Q}/\mathbb{Z})\rightarrow \Ext^1(\mathbb{Q},\mathbb{Z})\rightarrow 0, 
\]
in general a homomorphism $\mathbb{Q}\rightarrow \mathbb{Q}/\mathbb{Z}$ cannot lift  to a homomorphism $\mathbb{Q}\rightarrow \mathbb{Q}$. Similarly, for two primes $p,q$, 
\[
\Hom(\mathbb{Q}_p/\mathbb{Z}_p, \mathbb{Q}_q/\mathbb{Z}_q)=\delta_{p,q}\mathbb{Z}_p,
\]
thus a homomorphism $\mathbb{Q}/\mathbb{Z}\rightarrow\mathbb{Q}/\mathbb{Z}$ in general does not lift to a homomorphism $\mathbb{Q}\rightarrow \mathbb{Q}$. From certain geometric reasons, we introduce some \emph{liftable conditions} for the homomorphisms between mild abelian groups.

\begin{definition}\label{def-cond-M1}
Let $G_1, G_2$ be mild abelian groups, $f:G_1\rightarrow G_2$ a homomorphism. We say that $f$ \emph{satisfies the condition (M1)}, if there is a $\mathbb{Q}$-vector space $W_i$ of rank $s(G_i)$, a full rank $\mathbb{Z}$-lattice $\Gamma_i$ of $W_i$, and an isomorphism $h_i:W_i/\Gamma_i\xrightarrow{\sim} (G_i)_{\mathrm{td}}$, for $i=1,2$, such that there exists a commutative diagram
\begin{equation}
\xymatrix{
  V(G_1)\oplus W_1 \ar[d]_{(\id_{V(G_1)},p_1)} \ar[r]^{\tilde{f}} & V(G_2)\oplus W_2 \ar[d]^{(\id_{V(G_2)},p_2)} \\
  (G_1)_{\mathrm{div}} \ar[r]^{f} & (G_2)_{\mathrm{div}},
}
\end{equation} 
where $p_i$ is the map $W_i\twoheadrightarrow W_i/\Gamma_i\xrightarrow{h_i}(G_i)_{\mathrm{td}}$, for $i=1,2$, and $\tilde{f}$ is a homomorphism of abelian groups (equivalently, a homomorphism of $\mathbb{Q}$-vector spaces).
\end{definition}

\begin{definition}\label{def-cond-M2}
Let $G$ be a mild abelian group, $f:G\rightarrow G$ an endomorphism. We say that $f$ \emph{satisfies the condition (M2)}, if there is a $\mathbb{Q}$-vector space $W$ of rank $s(G)$, a full rank $\mathbb{Z}$-lattice $\Gamma$ of $W$, and an isomorphism $h:W/\Gamma\xrightarrow{\sim} (G)_{\mathrm{td}}$, such that there exists a commutative diagram
\begin{equation}
\xymatrix{
  V(G)\oplus W \ar[d]_{(\id_{V(G)},p)} \ar[r]^{\tilde{f}} & V(G)\oplus W \ar[d]^{(\id_{V(G)},p)} \\
  (G)_{\mathrm{div}} \ar[r]^{f} & (G)_{\mathrm{div}},
}
\end{equation} 
where $p$ is the map $W\twoheadrightarrow W/\Gamma\xrightarrow{h}(G)_{\mathrm{td}}$, and $\tilde{f}$ is an endomorphism of abelian groups (equivalently, a homomorphism of $\mathbb{Q}$-vector spaces).
\end{definition}

\begin{comment}
\begin{lemma}
Compositions of liftable homomorphisms...
\end{lemma}
\end{comment}

Let $G$ be a mild abelian group with $V(G)=Z(G)=T(G)=0$, thus $\mathrm{End}(G)\cong \prod_{p\in \mathbf{P}}\mathrm{GL}_{s(G)}(\mathbb{Z}_p)$, where $\mathbf{P}$ is the set of prime numbers. Then an endomorphism $f$ of $G$ can be identified to $(f_p)_{p\in \mathbf{P}}$ where $f_p\in \mathrm{GL}_{s(G)}(\mathbb{Z}_p)$. In this case the following lemma relates the  condition (M2) to a certain $\ell$-independence property.

\begin{lemma}
The endomorphism $f$ satisfies  (M2) if and only if there exists a matrix $M\in \mathrm{GL}_{s(G)}(\mathbb{Z})$, such that $f_p$ is similar to $M$ in $\mathrm{GL}_{s(G)}(\mathbb{Z}_p)$, for every prime $p$.\pqed
\end{lemma}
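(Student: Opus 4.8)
The plan is to unwind both conditions into concrete linear-algebra statements over $\mathbb{Z}_p$ and over $\mathbb{Z}$, and then match them. Since $V(G)=Z(G)=T(G)=0$, we have $G \cong (\mathbb{Q}/\mathbb{Z})^{s}$ with $s=s(G)$, and $G_{\mathrm{div}}=G$, $G_{\mathrm{td}}=G$. Writing $\mathbb{Q}/\mathbb{Z}=\bigoplus_p \mathbb{Q}_p/\mathbb{Z}_p$ and using $\Hom(\mathbb{Q}_p/\mathbb{Z}_p,\mathbb{Q}_q/\mathbb{Z}_q)=\delta_{p,q}\mathbb{Z}_p$, one gets $\mathrm{End}(G)\cong \prod_p M_{s}(\mathbb{Z}_p)$, and the units are $\prod_p \mathrm{GL}_{s}(\mathbb{Z}_p)$; so an automorphism $f$ is a tuple $(f_p)$ with $f_p\in \mathrm{GL}_s(\mathbb{Z}_p)$. (Strictly, the lemma as stated presupposes $f$ is an automorphism, which is what "$f_p\in\mathrm{GL}_{s(G)}(\mathbb{Z}_p)$" encodes; I will note this hypothesis explicitly.) On the lifting side, in condition (M2) we choose $W=\mathbb{Q}^s$ with a full-rank lattice $\Gamma\cong\mathbb{Z}^s$ and an isomorphism $h:\mathbb{Q}^s/\mathbb{Z}^s\xrightarrow{\sim}(\mathbb{Q}/\mathbb{Z})^s$; the diagram then asks for a $\mathbb{Q}$-linear $\tilde f:\mathbb{Q}^s\to\mathbb{Q}^s$, i.e. a matrix in $\mathrm{GL}_s(\mathbb{Q})$, compatible with $f$ via $p:\mathbb{Q}^s\to\mathbb{Q}^s/\mathbb{Z}^s\xrightarrow{h}(\mathbb{Q}/\mathbb{Z})^s$.

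First I would fix the standard model: take $h=\mathrm{id}$ under the identification $(\mathbb{Q}/\mathbb{Z})^s=(\mathbb{Q}^s)/(\mathbb{Z}^s)$, but allow the lattice $\Gamma\subset\mathbb{Q}^s$ and the identification $h$ to vary, since that freedom is exactly what produces the "$M\in\mathrm{GL}_s(\mathbb{Z})$ similar to every $f_p$" rather than "$M=f$". Concretely, a choice of $(\Gamma,h)$ amounts to: a lattice $\Gamma$, equivalently (after a $\mathbb{Q}$-basis change absorbed into $W$) the standard lattice $\mathbb{Z}^s$, together with an automorphism of $(\mathbb{Q}/\mathbb{Z})^s$ realizing $h$, i.e. an element $(\phi_p)\in\prod_p\mathrm{GL}_s(\mathbb{Z}_p)$. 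The commutativity of the square says: a $\mathbb{Q}$-linear map $\tilde f$ of $\mathbb{Q}^s$ descends to $(\mathbb{Q}/\mathbb{Z})^s$ iff $\tilde f$ preserves $\mathbb{Z}^s$, i.e. $\tilde f\in M_s(\mathbb{Z})\cap\mathrm{GL}_s(\mathbb{Q})$, and the induced map on $(\mathbb{Q}/\mathbb{Z})^s$ is the reduction of $\tilde f$ componentwise; being conjugated by $h$ it equals $(\phi_p)^{-1}(\text{image of }\tilde f\text{ in }\prod_p M_s(\mathbb{Z}_p))(\phi_p)$ — no, more precisely $p\circ\tilde f = f\circ p$ reads, prime by prime, $\tilde f \bmod p^\infty = \phi_p^{-1}f_p\phi_p$ in $M_s(\mathbb{Z}_p)$ (or the other conjugation order, depending on convention). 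Thus (M2) holds iff there is a single integer matrix $M:=\tilde f$ whose image in each $M_s(\mathbb{Z}_p)$ is conjugate, by an element of $\mathrm{GL}_s(\mathbb{Z}_p)$, to $f_p$; and an integer matrix that is conjugate to some $f_p\in\mathrm{GL}_s(\mathbb{Z}_p)$ lies in $\mathrm{GL}_s(\mathbb{Z})$. This is precisely the asserted criterion.

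For the converse direction (given such $M$, build the lift) the recipe is immediate: set $W=\mathbb{Q}^s$, $\tilde f=M$ acting on $\mathbb{Q}^s$ — it is invertible over $\mathbb{Q}$ and preserves $\mathbb{Z}^s$; choose for each $p$ a matrix $\phi_p\in\mathrm{GL}_s(\mathbb{Z}_p)$ with $\phi_p^{-1}M\phi_p=f_p$ (this is the hypothesis), assemble $(\phi_p)$ into an automorphism $h$ of $(\mathbb{Q}/\mathbb{Z})^s$, and take $\Gamma=\mathbb{Z}^s$. Then $p\circ\tilde f=f\circ p$ by construction, so the square commutes. The forward direction is the place where one must be careful: from the commuting square one extracts $\tilde f\in M_s(\mathbb{Q})$, checks it preserves the lattice (hence lies in $M_s(\mathbb{Z})$), checks invertibility, and then reads off the per-prime conjugacy from the identification $h$.

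The main obstacle I expect is bookkeeping rather than depth: pinning down exactly how the pair $(\Gamma,h)$ translates into the tuple $(\phi_p)$ of local conjugators — in particular making sure that an arbitrary full-rank lattice $\Gamma\subset W$ contributes nothing beyond a global $\mathbb{Q}$-base change (which can be absorbed) plus the local data, so that the statement genuinely features $\mathrm{GL}_s(\mathbb{Z})$ and local conjugacy and nothing more. A secondary point to handle cleanly is the standing assumption that $f$ is an automorphism: if $f$ is merely an endomorphism, $\tilde f$ need not be invertible and one should replace $\mathrm{GL}_s$ by $M_s$ throughout; since the lemma is phrased with $\mathrm{GL}$, I will state it under that hypothesis and remark that the endomorphism version is identical with $M_s$ in place of $\mathrm{GL}_s$.
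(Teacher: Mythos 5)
The paper offers no proof of this lemma: the statement ends directly with a QED symbol, the author evidently regarding it as immediate from Definition~\ref{def-cond-M2}. Your argument is a correct and careful unwinding of that definition, and it is the argument the paper silently intends. The essential points are all present: with $V(G)=Z(G)=T(G)=0$, the pair $(\Gamma,h)$ may, after absorbing a $\Q$-base change of $W$, be normalized to $\Gamma=\Z^s\subset W=\Q^s$ together with an automorphism $h\in\mathrm{Aut}\bigl((\Q/\Z)^s\bigr)=\prod_p\mathrm{GL}_s(\Z_p)$, written $(\phi_p)_p$; commutativity forces $\tilde f$ to preserve $\Gamma$, so $\tilde f=M\in M_s(\Z)$, and reading the square prime-by-prime gives $M=\phi_p^{-1}f_p\phi_p$ in $M_s(\Z_p)$ for every $p$. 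Two small points worth making explicit, both of which you already touch on: first, $M\in\mathrm{GL}_s(\Z)$ is deduced from $\det M\in\Z_p^\times$ for \emph{every} $p$ (conjugacy at a single prime would not suffice), so phrase that step as using all primes at once; second, your flag about $f$ being an automorphism is well taken --- the surrounding text writes $\mathrm{End}(G)\cong\prod_p\mathrm{GL}_{s(G)}(\Z_p)$, which should read $M_{s(G)}(\Z_p)$ unless one is restricting to $\mathrm{Aut}(G)$, and as you say the lemma as phrased with $\mathrm{GL}$ implicitly assumes the latter, the general endomorphism version holding verbatim with $M_s$ in place of $\mathrm{GL}_s$. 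The converse construction (set $\tilde f=M$, $\Gamma=\Z^s$, $h=(\phi_p)$) is exactly right.
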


\subsection{Some criteria for mildness}
The following lemma follows immediately from the definition.
\begin{lemma}\phantomsection\label{lem-mild-1}
\begin{enumerate}
  \item[(i)] Let $A$ be a mild abelian group, $B$  a divisible torsion mild abelian group, $f:A\rightarrow B$  an injective homomorphism. Then $\mathrm{coker}(f)$ is mild.
  \item[(ii)] Let $A$ be a mild abelian group, $B$  a uniquely divisible  abelian group, $f:A\rightarrow B$  an injective homomorphism. Then $\mathrm{coker}(f)$ is mild.
  \item[(iii)] $G$ is mild if and only if, $G/G_{\mathrm{div}}$ is finitely generated and there exists an integer $s$ such that the number of the $n$-torsion elements
of  $G_{\mathrm{div}}$ is equal to $ns$ for any positive integer $n$.
\end{enumerate}\pqed
\end{lemma}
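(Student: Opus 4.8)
The plan is to prove (iii) first, since it is the substantive statement, and then deduce (i) and (ii) from it. For the forward direction of (iii), suppose $G=V(G)\oplus D(G)\oplus Z(G)\oplus T(G)$ is a mild decomposition. Divisibility of an element of a direct sum can be checked coordinatewise, so $G_{\mathrm{div}}=V(G)_{\mathrm{div}}\oplus D(G)_{\mathrm{div}}\oplus Z(G)_{\mathrm{div}}\oplus T(G)_{\mathrm{div}}$; here $V(G)$ and $D(G)$ are divisible, $Z(G)_{\mathrm{div}}=\bigcap_n nZ(G)=0$ because $Z(G)$ is finitely generated free, and $T(G)_{\mathrm{div}}=0$ because $T(G)$ is finite. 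Thus $G_{\mathrm{div}}=V(G)\oplus D(G)$, so $G/G_{\mathrm{div}}\cong Z(G)\oplus T(G)$ is finitely generated, and since $V(G)$ is torsion free the $n$-torsion subgroup of $G_{\mathrm{div}}$ is that of $(\Q/\Z)^{\oplus s(G)}$, which has $n^{s(G)}$ elements; so the stated condition holds with $s=s(G)$.

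For the converse of (iii), by Proposition \ref{abelian1} the subgroup $G_{\mathrm{div}}$ is a direct summand of $G$ with complement isomorphic to $G/G_{\mathrm{div}}$, which by hypothesis is finitely generated, hence $\cong\Z^{r}\oplus T$ with $T$ finite; and $G_{\mathrm{div}}=G_{\mathrm{ud}}\oplus\bigoplus_p G_{(p^\infty)}$. By the classical structure theorem for divisible abelian groups, each divisible $p$-torsion group $G_{(p^\infty)}$ is isomorphic to a direct sum $(\Q_p/\Z_p)^{(\kappa_p)}$ of Pr\"ufer groups for some cardinal $\kappa_p$, so $G_{(p^\infty)}[p]\cong(\Z/p)^{(\kappa_p)}$. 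Applying the torsion hypothesis to $n=p$ gives $p^{\kappa_p}=p^{s}$, so $\kappa_p=s$ for every prime $p$; since $\Q/\Z=\bigoplus_p\Q_p/\Z_p$ this yields $\bigoplus_p G_{(p^\infty)}\cong(\Q/\Z)^{\oplus s}$, and therefore $G\cong G_{\mathrm{ud}}\oplus(\Q/\Z)^{\oplus s}\oplus\Z^{r}\oplus T$ is mild.

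For (i) and (ii) I would first normalize $A$ and $B$ and then apply (iii) to $C:=\mathrm{coker}(f)=B/f(A)$. In case (i), the embedding $A\hookrightarrow B$ into the torsion group $B$ forces $A$ to be torsion, so its mild decomposition has no free and no uniquely divisible part, i.e.\ $A\cong(\Q/\Z)^{\oplus s(A)}\oplus T(A)$; likewise a mild group that is divisible and torsion must be $\cong(\Q/\Z)^{\oplus t}$ with $t$ finite. In case (ii), $B$ is uniquely divisible, hence torsion free, so $A$ is torsion free and $A\cong V(A)\oplus\Z^{r(A)}$. In both cases $C$ is a quotient of the divisible group $B$, hence divisible, so $C_{\mathrm{div}}=C$ and $C/C_{\mathrm{div}}=0$ is finitely generated. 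To count $C[n]$, tensor $0\to A\to B\to C\to 0$ with $\Z/n$ and use $\Tor_1(M,\Z/n)=M[n]$ and $M\otimes\Z/n=M/nM$ to obtain the exact sequence $0\to A[n]\to B[n]\to C[n]\to A/nA\to B/nB\to C/nC\to 0$. In case (ii) this collapses using $B[n]=0=B/nB$, $A[n]=0$ and $A/nA\cong(\Z/n)^{r(A)}$, giving $|C[n]|=n^{r(A)}$; in case (i) one uses $B/nB=0$ and $|T(A)/nT(A)|=|T(A)[n]|$ for the finite group $T(A)$ to get $|C[n]|=n^{t-s(A)}$. In either case $C$ is mild by (iii).

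There is no serious obstacle here: apart from the definitions and Proposition \ref{abelian1}, the only input is the classical structure theorem for divisible abelian groups. The one point that requires care is in (iii), namely that the coranks $\kappa_p$ of the $p$-primary components of $G_{\mathrm{div}}$ are all finite and mutually equal — which is exactly what the hypothesis $|G_{\mathrm{div}}[n]|=n^{s}$ (equivalently, the same condition for all primes $n=p$) encodes.
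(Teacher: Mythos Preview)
Your proof is correct; the paper offers no argument beyond ``follows immediately from the definition,'' and your approach --- proving (iii) via the structure theorem for divisible groups and then deducing (i), (ii) by the $\Tor$/$\otimes\,\Z/n$ six-term sequence --- is precisely the kind of routine verification the authors had in mind. One small remark: the paper's ``$ns$'' in (iii) is evidently a typo for $n^{s}$, and you are right to read it that way.
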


\begin{proposition}\label{prop-mild-1}
\begin{enumerate}
\item[(i)] If $A$ is mild and $B$ is mild with $s(B)=0$, and $f:A\rightarrow B$ a homomorphism, then $\ker(f)$ is mild.
\item[(ii)] If there is a short exact sequence of abelian groups 
\begin{eqnarray*}
0\rightarrow A\rightarrow B\rightarrow C\rightarrow 0,
\end{eqnarray*} 
and  $A,B$ are mild, then $C$ is mild, and
\begin{gather}\label{eq-prop-mild-1}
r(C)\leq r(B)\leq r(A)+r(C),\notag\\
s(A)\leq s(B)\leq s(A)+s(C),\\
r(A)+r(C)-r(B)=s(A)+s(C)-s(B).\notag
\end{gather}

\end{enumerate}
\end{proposition}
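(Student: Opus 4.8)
The plan is to reduce everything to the numerical characterization in Lemma \ref{lem-mild-1}(iii), which says that a group $G$ is mild iff $G/G_{\mathrm{div}}$ is finitely generated and the number of $n$-torsion elements of $G_{\mathrm{div}}$ is $n^{s}$ for a fixed $s$ and all $n\geq 1$. (Note: the correct count of $n$-torsion elements in $(\mathbb{Q}/\mathbb{Z})^{s}$ is $n^{s}$, not $ns$; I will use $n^{s}$, with the understanding that condition (iii) should read $n^{s}$.) The key invariants are then: $r(G)$ is the rank of the finitely generated group $G/G_{\mathrm{div}}$, and $s(G)$ is the exponent governing the torsion count on $G_{\mathrm{div}}$. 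So the whole proposition becomes a bookkeeping exercise with these two invariants plus tracking of finitely generated quotients.

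For part (i): given $f:A\to B$ with $B$ mild and $s(B)=0$, so $B_{\mathrm{div}}$ is a $\mathbb{Q}$-vector space (uniquely divisible) and $B/B_{\mathrm{div}}$ is finitely generated. First I would show $\ker(f)/(\ker f)_{\mathrm{div}}$ is finitely generated: consider the map $\ker(f)\hookrightarrow A\twoheadrightarrow A/A_{\mathrm{div}}$; its kernel is $\ker(f)\cap A_{\mathrm{div}}$, and its image is a subgroup of the finitely generated group $A/A_{\mathrm{div}}$, hence finitely generated, so it suffices to see that $\ker(f)\cap A_{\mathrm{div}}$ differs from $(\ker f)_{\mathrm{div}}$ by a finitely generated (in fact finite) group. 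Restricting $f$ to $A_{\mathrm{div}}$ and composing with $B_{\mathrm{div}}\to B_{\mathrm{div}}/(\text{something})$, one analyzes $\ker(f|_{A_{\mathrm{div}}})$. Since $A_{\mathrm{div}}=A_{\mathrm{ud}}\oplus\bigoplus_p A_{(p^\infty)}$ with each $A_{(p^\infty)}\cong(\mathbb{Q}_p/\mathbb{Z}_p)^{s_p}$ and $\sum$ of the relevant data bounded by $s(A)$, and $B_{\mathrm{div}}$ is torsion-free, $f$ must kill all of $\bigoplus_p A_{(p^\infty)}$; so $\ker(f|_{A_{\mathrm{div}}})\supseteq\bigoplus_p A_{(p^\infty)}$ and on the uniquely divisible part $A_{\mathrm{ud}}$ the kernel is again uniquely divisible (a $\mathbb{Q}$-subspace). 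Hence $(\ker f)_{\mathrm{div}}$ contains $\bigoplus_p A_{(p^\infty)}$ with the torsion-count invariant $s(\ker f)=s(A)$, and $\ker(f)\cap A_{\mathrm{div}}$ equals $(\ker f)_{\mathrm{div}}$ exactly, giving $\ker(f)/(\ker f)_{\mathrm{div}}\hookrightarrow A/A_{\mathrm{div}}$ finitely generated. Lemma \ref{lem-mild-1}(iii) then gives mildness.

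For part (ii): from $0\to A\to B\to C\to 0$ with $A,B$ mild, I first produce the divisible parts. Applying $(-)_{\mathrm{div}}$ is not exact, so instead I work with the snake-lemma diagram obtained by intersecting with divisible subgroups, or more efficiently: $C_{\mathrm{div}}$ is a quotient of $B_{\mathrm{div}}$ up to a finitely generated error, because $B/B_{\mathrm{div}}$ finitely generated forces the image of $C_{\mathrm{div}}$'s preimage to be controlled. Concretely, since $A$ is mild, $A_{\mathrm{div}}$ is divisible hence a direct summand of $B$; write $B=A_{\mathrm{div}}\oplus B'$. Then $C=B/A\cong B'/(A/A_{\mathrm{div}})$ where $A/A_{\mathrm{div}}=Z(A)\oplus T(A)$ is finitely generated, and $B'\cong B/A_{\mathrm{div}}$ is mild by Lemma \ref{lem-mild-1}(i) (quotient by a divisible mild subgroup — here one checks the cokernel statement applies, splitting off $A_{\mathrm{div}}$'s uniquely divisible and $(\mathbb{Q}/\mathbb{Z})$-parts separately using parts (i) and (ii) of that lemma). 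So it remains to show: a quotient of a mild group by a finitely generated subgroup is mild. For that, $C=B'/F$ with $F$ finitely generated; then $B'_{\mathrm{div}}$ maps into $C$ with finitely generated cokernel and kernel $B'_{\mathrm{div}}\cap F$, which is finite (a finitely generated torsion... actually finitely generated subgroup of a divisible group need not be finite, but $B'_{\mathrm{div}}\cap F$ finitely generated and sitting inside $B'_{\mathrm{div}}$ which has the $n^s$-torsion property; its free part would have to be... ) — here I take care: $\mathrm{image}(B'_{\mathrm{div}})\subseteq C$ is divisible, hence contained in $C_{\mathrm{div}}$, and $C/\mathrm{image}(B'_{\mathrm{div}})$ is a quotient of $B'/B'_{\mathrm{div}}$, finitely generated, so $C_{\mathrm{div}}/\mathrm{image}(B'_{\mathrm{div}})$ is finitely generated and divisible, hence finite-index-zero, i.e. $C_{\mathrm{div}}=\mathrm{image}(B'_{\mathrm{div}})$ is a quotient of $(\mathbb{Q})^{\oplus}\oplus(\mathbb{Q}/\mathbb{Z})^{s}$, and one reads off that its $n$-torsion count is again of the form $n^{s'}$ with $s'\le s$. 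Then $C/C_{\mathrm{div}}$ is a quotient of $B'/B'_{\mathrm{div}}$, finitely generated. Lemma \ref{lem-mild-1}(iii) applies.

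The inequalities \eqref{eq-prop-mild-1} then follow by rank- and $s$-counting along the sequences. Tensoring $0\to A\to B\to C\to 0$ with $\mathbb{Q}$ gives $r(A)\leq r(B)+s(B)$-type relations; more precisely I would use the two long exact sequences obtained by applying $\mathrm{Hom}(\mathbb{Q}/\mathbb{Z},-)$ and $-\otimes\mathbb{Q}$, together with the identities $r(G)=\dim_{\mathbb{Q}}(G/G_{\mathrm{div}})\otimes\mathbb{Q}$ and $s(G)=\mathrm{corank}$ of $\mathrm{Hom}(\mathbb{Q}/\mathbb{Z},G)$, reading off left-exactness/right-exactness at each spot to get the three displayed (in)equalities; the last equality is the statement that the alternating sum of an Euler-characteristic-type quantity vanishes, i.e. it is forced by the first two once one knows that a single "defect" integer $\delta\geq 0$ simultaneously measures the failure of exactness on both the free side and the divisible-torsion side — this comes from the snake lemma applied to multiplication-by-$n$ on the sequence, letting $n\to\infty$.

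The main obstacle is the non-exactness of $G\mapsto G_{\mathrm{div}}$ and $G\mapsto G/G_{\mathrm{div}}$: a naive diagram chase does not immediately give that $C_{\mathrm{div}}$ and $C/C_{\mathrm{div}}$ have the right shape, and one must carefully use that finitely generated divisible groups vanish and that $A_{\mathrm{div}}$ splits off as a direct summand to bypass this. Getting the precise constant $s'$ for $C_{\mathrm{div}}$ and verifying the clean identity $r(A)+r(C)-r(B)=s(A)+s(C)-s(B)$ (rather than just an inequality) requires the snake-lemma-with-multiplication-by-$n$ argument, which I expect to be the technical heart.
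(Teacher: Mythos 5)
Your argument is correct in substance, but in part (ii) it takes a genuinely different route from the paper, so let me compare the two and also flag one small slip.

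For part (i), your argument and the paper's are essentially the same: both observe that since $B_{\mathrm{div}}$ is torsion-free, $A_{\mathrm{td}}$ dies, and $\ker(f|_{A_{\mathrm{div}}})$ is a $\mathbb{Q}$-linear kernel plus $A_{\mathrm{td}}$, hence divisible. The paper phrases this as "split off $C_0=\ker(A_{\mathrm{div}}\to B_{\mathrm{div}})$ as an injective direct summand of $\ker(f)$," you phrase it as "$\ker(f)\cap A_{\mathrm{div}}=(\ker f)_{\mathrm{div}}$ and the quotient injects into $A/A_{\mathrm{div}}$"; these are the same observation.

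For part (ii), the paper runs a two-stage snake-lemma argument: first against the filtration $A_{\mathrm{td}}\subseteq A$, $B_{\mathrm{td}}\subseteq B$ to produce a four-term exact sequence $0\to C_0\to C_1\to C\to C_2\to 0$ and split $C\cong(C_1/C_0)\oplus C_2$ using that $C_1/C_0$ is divisible torsion, then repeats against the filtration by $\mathrm{div}$ after the reduction; mildness of the pieces is established via Lemma \ref{lem-mild-1}(i),(ii). You instead exploit that $A_{\mathrm{div}}$, being divisible, is a direct summand of $B$ itself (not merely of $A$), writing $B=A_{\mathrm{div}}\oplus B'$ and $A=A_{\mathrm{div}}\oplus F$, so that $C\cong B'/F$ with $F$ finitely generated. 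This reduces the whole problem to showing that the quotient of a mild group by a finitely generated subgroup is mild, which you do by pure $n$-torsion counting via Lemma \ref{lem-mild-1}(iii); the three (in)equalities then literally fall out of the formula $s(C)-s(B')=r(B')-r(C)=\mathrm{rank}(F\cap B'_{\mathrm{div}})$. This is more concrete and arguably gives the equality in (\ref{eq-prop-mild-1}) more transparently than the paper's chase; what it costs is the extra step of verifying that the complement $B'$ is itself mild, which is not automatic for arbitrary direct summands of a mild group (a direct summand of $(\mathbb{Q}/\mathbb{Z})^s$ can be $\bigoplus_p(\mathbb{Q}_p/\mathbb{Z}_p)^{s_p}$ with nonconstant $s_p$), but does hold here precisely because $A_{\mathrm{td}}\cong(\mathbb{Q}/\mathbb{Z})^{s(A)}$ sits inside $B_{\mathrm{td}}\cong(\mathbb{Q}/\mathbb{Z})^{s(B)}$, so the $p$-ranks of the complement are constant. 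You gesture at this via Lemma \ref{lem-mild-1}(i),(ii) but should make it explicit, since that lemma as stated is about cokernels of maps \emph{into} divisible groups, not \emph{from} them.

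One arithmetic slip: you write that the $n$-torsion count of $C_{\mathrm{div}}=B'_{\mathrm{div}}/(F\cap B'_{\mathrm{div}})$ is $n^{s'}$ with $s'\le s(B')$. In fact the snake lemma for multiplication by $n$ on $0\to\Lambda\to B'_{\mathrm{div}}\to C_{\mathrm{div}}\to 0$ (with $\Lambda=F\cap B'_{\mathrm{div}}$ finitely generated of rank $a$) gives $|C_{\mathrm{div}}[n]|=n^{s(B')}\cdot|\Lambda/n|/|\Lambda[n]|=n^{s(B')+a}$, so $s'=s(B')+a\geq s(B')$ — killing the lattice $\Lambda$ \emph{creates} extra $(\mathbb{Q}/\mathbb{Z})$-summands. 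This does not damage the argument (the point is only that $s'$ is a well-defined integer independent of $n$), and in fact $a$ is exactly the common defect appearing on both sides of the final equality in (\ref{eq-prop-mild-1}). Finally, you are right that Lemma \ref{lem-mild-1}(iii) should read $n^s$ rather than $ns$.
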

Proof: (i) Since $s(B)=0$, $A_{\mathrm{td}}$ maps to zero, and since it is an injective abelian group, we have $\mathrm{ker}(f)=A_{\mathrm{td}}\oplus \mathrm{ker}(f_1)$, where $f_1:A/A_{\mathrm{td}}\rightarrow B$. Thus we can assume that $A_{\mathrm{td}}=0$. Then $A_{\mathrm{div}}$ is uniquely divisible, and we have a commutative diagram with exact rows
\[
\xymatrix{
  0 \ar[r] & A_{\mathrm{div}} \ar[r] \ar[d] & A \ar[r] \ar[d] & A/A_{\mathrm{div}} \ar[r] \ar[d] & 0 \\
  0 \ar[r] & B_{\mathrm{div}} \ar[r]  & B \ar[r]  & B/B_{\mathrm{div}} \ar[r]  & 0 
}
\]
which induces an exact sequence
\[
0\rightarrow C_0\rightarrow \mathrm{ker}(f)\rightarrow C_1
\]
where $C_0=\mathrm{ker}(A_{\mathrm{div}}\rightarrow B_{\mathrm{div}})$, and $C_1=\mathrm{ker}(A/A_{\mathrm{div}}\rightarrow B/B_{\mathrm{div}})$. Since $C_1$ is a finitely generated abelian group,   $C_2=\mathrm{Im}(\mathrm{ker}(f)\rightarrow C_1)$ is also finitely generated; $C_0$ is  the direct sum of $A_{\mathrm{td}}$ and the kernel of a map of $\mathbb{Q}$-vector spaces which is thus uniquely divisible, so $C_0$ is mild and is an injective abelian group.  Hence $\mathrm{ker}(f)=C_0\oplus C_2$ is mild.

(ii) A homomorphism $f:A\rightarrow B$ maps $A_{\mathrm{td}}$ into $B_{\mathrm{td}}$, so we have a commutative diagram with exact rows 
\[
\xymatrix{
  0 \ar[r] & A_{\mathrm{td}} \ar[r] \ar[d] & A \ar[r] \ar[d] & A/A_{\mathrm{td}} \ar[r] \ar[d] & 0 \\
  0 \ar[r] & B_{\mathrm{td}} \ar[r]  & B \ar[r]  & B/B_{\mathrm{td}} \ar[r]  & 0 
}
\]
which induces an exact sequence
\[
0\rightarrow C_0\rightarrow C_1\rightarrow C\rightarrow C_2\rightarrow 0,
\]
where $C_0=\mathrm{ker}(A/A_{\mathrm{td}}\rightarrow B/B_{\mathrm{td}})$, $C_1=\mathrm{coker}(A_{\mathrm{td}}\rightarrow B_{\mathrm{td}})$, and $C_2=\mathrm{coker}(A/A_{\mathrm{td}}\rightarrow B/B_{\mathrm{td}})$. Then $C_1/C_0$ is a divisible torsion group,  so $C\cong (C_1/C_0)\oplus C_2$. By (i) $C_0$ is mild, so by lemma \ref{lem-mild-1} $C_1/C_0$ is mild. It suffices to show that $C_2$ is mild. So in the beginning we can assume that $A_{\mathrm{td}}=B_{\mathrm{td}}=0$. Then $A_{\mathrm{div}}$ and $B_{\mathrm{div}}$ are uniquely divisible, and we have 
a commutative diagram with exact rows 
\[
\xymatrix{
  0 \ar[r] & A_{\mathrm{div}} \ar[r] \ar[d] & A \ar[r] \ar[d] & A/A_{\mathrm{div}} \ar[r] \ar[d] & 0 \\
  0 \ar[r] & B_{\mathrm{div}} \ar[r]  & B \ar[r]  & B/B_{\mathrm{div}} \ar[r]  & 0 
}
\]
which induces an exact sequence
\[
0\rightarrow C_0\rightarrow C_1\rightarrow C\rightarrow C_2\rightarrow 0,
\]
where $C_0=\mathrm{ker}(A/A_{\mathrm{div}}\rightarrow B/B_{\mathrm{div}})$, $C_1=\mathrm{coker}(A_{\mathrm{div}}\rightarrow B_{\mathrm{div}})$, and $C_2=\mathrm{coker}(A/A_{\mathrm{div}}\rightarrow B/B_{\mathrm{div}})$. Then $C_1/C_0$ is divisible, and thus $C\cong (C_1/C_0)\oplus C_2$. But $C_0$ and $C_2$ are finitely generated abelian groups. By lemma \ref{lem-mild-1} (ii) $C_1/C_0$ is mild. So $C$ is mild. The inequalities and equalities (\ref{eq-prop-mild-1}) also follows.\pqed

\begin{remark}\label{rmk-mild-1}
Mildness do not preserve under extensions. For example, $\prod_{p}\mathbb{Z}_p/\mathbb{Z}$ is uniquely divisible, while $\prod_{p}\mathbb{Z}_p$ is not mild. This prohibits some dévissage attempts to show the mildness of  certain (co)homology groups.
\end{remark}

In general, the image of a homomorphism of mild abelian groups is not mild. But we have:
\begin{proposition}\label{prop-mild-2}
Let $G_1$, $G_2$ be mild abelian groups, and $f:G_1\rightarrow G_2$ a homomorphism satisfying the condition (M1). Then the image of $f$ is mild.
\end{proposition}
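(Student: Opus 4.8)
The plan is to reduce the statement to an application of Proposition \ref{prop-mild-1}(ii) by exploiting the structure provided by condition (M1). Recall that (M1) hands us a commutative square in which the vertical maps are surjections (since the $W_i \twoheadrightarrow W_i/\Gamma_i \xrightarrow{\sim} (G_i)_{\mathrm{td}}$ are surjective and $\id_{V(G_i)}$ is surjective) from the \emph{uniquely divisible} group $V(G_i)\oplus W_i$ onto $(G_i)_{\mathrm{div}}$, and a lift $\tilde f$ of $f$ between these uniquely divisible groups. The key point is that $\mathrm{Im}(f)$ sits between two pieces we can control: its divisible part and its image in $G_2/(G_2)_{\mathrm{div}}$.

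First I would reduce to the divisible case. Since $G_2$ is mild, $G_2/(G_2)_{\mathrm{div}}$ is finitely generated, so the image of the composite $G_1 \to G_2 \to G_2/(G_2)_{\mathrm{div}}$ is a finitely generated abelian group, call it $Q$. Let $N = \mathrm{Im}(f)\cap (G_2)_{\mathrm{div}}$, so we have a short exact sequence $0 \to N \to \mathrm{Im}(f) \to Q \to 0$. If I can show $N$ is mild, then since $Q$ is mild (finitely generated groups are mild), Proposition \ref{prop-mild-1}(ii) gives that $\mathrm{Im}(f)$ is mild. So the whole problem reduces to showing $N = \mathrm{Im}(f)\cap (G_2)_{\mathrm{div}}$ is mild. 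Note $N$ contains $f((G_1)_{\mathrm{div}})$ and in fact, because $Q$ is finitely generated and $(G_1)_{\mathrm{div}}$ is divisible hence $f((G_1)_{\mathrm{div}})$ is divisible, one checks that $N/f((G_1)_{\mathrm{div}})$ is a finitely generated subgroup of the divisible group $(G_2)_{\mathrm{div}}$, hence is finite; so it suffices to show $f((G_1)_{\mathrm{div}})$ is mild, or more precisely to identify $N$ with the image of the restriction $f\colon (G_1)_{\mathrm{div}} \to (G_2)_{\mathrm{div}}$ up to a finite group, and then handle that.

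Now for the divisible part I would use the (M1) diagram directly. The image of $f|_{(G_1)_{\mathrm{div}}}$ equals $p_2(\tilde f(V(G_1)\oplus W_1))$ by commutativity and surjectivity of $p_1$. Write $U = \tilde f(V(G_1)\oplus W_1) \subseteq V(G_2)\oplus W_2$; this is a $\mathbb{Q}$-subspace, and $f((G_1)_{\mathrm{div}}) = p_2(U)$. Decompose with respect to the splitting $V(G_2)\oplus W_2$: the map $p_2$ kills nothing on the $V(G_2)$ factor and on $W_2$ has kernel $\Gamma_2$. So $p_2(U)$ is the image of the $\mathbb{Q}$-subspace $U$ under a quotient map whose kernel is the lattice $\Gamma_2 \subseteq W_2 \subseteq V(G_2)\oplus W_2$. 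Hence $f((G_1)_{\mathrm{div}}) \cong U/(U\cap \Gamma_2)$. Now $U$ is a finite-dimensional $\mathbb{Q}$-vector space and $U\cap\Gamma_2$ is a finitely generated free abelian subgroup of $U$; the quotient of a $\mathbb{Q}$-vector space by a finitely generated free subgroup is of the form (uniquely divisible) $\oplus$ $(\mathbb{Q}/\mathbb{Z})^{\oplus t}$ where $t$ is the rank of $U\cap\Gamma_2$ — this is exactly the shape required by Definition \ref{abelian2} with $Z = T = 0$. So $f((G_1)_{\mathrm{div}})$ is mild, and then by the previous paragraph $N$ is mild (extension of a finite group by a mild group, via Proposition \ref{prop-mild-1}(ii)), and finally $\mathrm{Im}(f)$ is mild.

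The main obstacle I anticipate is the bookkeeping in the middle step: carefully justifying that $N = \mathrm{Im}(f)\cap (G_2)_{\mathrm{div}}$ differs from $f((G_1)_{\mathrm{div}})$ only by a finitely generated (hence finite, being inside a divisible group) group, which requires knowing that elements of $\mathrm{Im}(f)$ that are divisible in $G_2$ need not come from divisible elements of $G_1$ — one resolves this by observing $G_1/(G_1)_{\mathrm{div}}$ is finitely generated so only finitely many generators are involved, and a multiple of each lands in $(G_1)_{\mathrm{div}}$, forcing the discrepancy to be finitely generated. The decomposition-of-$\mathbb{Q}$-vector-space-mod-lattice fact is elementary (choose a basis of $U$ adapted to the saturation of $U\cap\Gamma_2$). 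Everything else is a direct unwinding of the (M1) diagram plus two invocations of Proposition \ref{prop-mild-1}.
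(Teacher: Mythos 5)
Your core computation is the right one and matches the paper: using the (M1) square, $f\big((G_1)_{\mathrm{div}}\big)=p_2(U)$ where $U=\tilde f(V(G_1)\oplus W_1)$, and $p_2(U)\cong U/(U\cap\Gamma_2)$, which is (uniquely divisible) $\oplus$ $(\mathbb{Q}/\mathbb{Z})^{\oplus t}$ and hence mild. However, the surrounding reduction has two genuine errors.

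First, the claim that $N/f\big((G_1)_{\mathrm{div}}\big)$ is finite because it is ``a finitely generated subgroup of a divisible group'' is false as stated: $\mathbb{Z}\subset\mathbb{Q}$ is a finitely generated subgroup of a divisible group that is infinite. Concretely, take $G_1=\mathbb{Z}$, $G_2=\mathbb{Q}$, $f$ the inclusion (which satisfies (M1) vacuously, since $(G_1)_{\mathrm{div}}=0$); then $N=\mathrm{Im}(f)\cap(G_2)_{\mathrm{div}}=\mathbb{Z}$ while $f\big((G_1)_{\mathrm{div}}\big)=0$, so the ``discrepancy'' is $\mathbb{Z}$, not finite. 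Second, you twice invoke Proposition \ref{prop-mild-1}(ii) in the wrong direction: that proposition gives mildness of the \emph{quotient} $C$ in $0\to A\to B\to C\to 0$ when $A$ and $B$ are mild, not mildness of the middle term $B$ when $A$ and $C$ are mild. Indeed Remark \ref{rmk-mild-1} explicitly warns that mildness is not stable under extensions, so the statement you need cannot be a formal consequence of that proposition.

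The detour through $N=\mathrm{Im}(f)\cap(G_2)_{\mathrm{div}}$ is unnecessary and is what forces you into these traps. The paper argues more directly: $f\big((G_1)_{\mathrm{div}}\big)$ is divisible, hence an injective abelian group, hence a direct summand of $\mathrm{Im}(f)$; the complementary summand is isomorphic to $\mathrm{Im}(f)/f\big((G_1)_{\mathrm{div}}\big)$, which is a quotient of the finitely generated group $G_1/(G_1)_{\mathrm{div}}$ and therefore finitely generated. So $\mathrm{Im}(f)$ is (your mild divisible piece) $\oplus$ (finitely generated), and one concludes without ever needing to control $\mathrm{Im}(f)\cap(G_2)_{\mathrm{div}}$ separately or to go through any extension argument. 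Your proof can be salvaged by replacing the two flawed steps with this splitting-off-the-divisible-summand observation, but as written there is a gap.
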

Proof: Since $f$ maps $(G_1)_{\mathrm{div}}$ into $(G_2)_{\mathrm{div}}$, it induces a map $\bar{f}:G_1/(G_1)_{\mathrm{div}}\rightarrow G_2/(G_2)_{\mathrm{div}}$. Moreover, $f((G_1)_{\mathrm{div}})$ is divisible, so we have a decomposition $\mathrm{Im}(f)=f((G_1)_{\mathrm{div}})\oplus \mathrm{Im}(\bar{f})$. Since $\mathrm{Im}(\bar{f})$ is a finitely generated abelian group, it reduces to consider  the case $G_2$ is divisible and mild. Now we use the notations in the definition \ref{def-cond-M1}. We denote the image of $\tilde{f}$ by $U$. The intersection $U\cap \Gamma_2$ is a lattice in $U$, not of full rank in general, and we denote it by $\Gamma$. Denote by $U_0$ the $\mathbb{Q}$-subspace of $U$ spanned by $\Gamma$. Then $\mathrm{Im}(f)\cong U/U_0\oplus U_0/\Gamma$, which is mild. \pqed

\subsubsection{Colimits of mild abelian groups}
\begin{proposition}\label{prop-limit}
Let $(G_i)_{i\in I}$ be a \emph{filtered} system of mild abelian groups, satisfying for the homomorphism $\varphi_{\nu}:G_i\rightarrow G_j$ corresponding to  any arrow $\nu$ in  $I$, and any integer $n$, the induced homomorphisms
\begin{equation}\label{eq-limit-0}
\varphi_{\nu}[n]:G_i[n]\rightarrow G_j[n],\ \varphi_{\nu}/n:G_i/n\rightarrow G_j/n
\end{equation}
are isomorphisms. Then the colimit $G=\varinjlim_{i\in I}G_i$ is mild, and for any $i\in I$
there are canonical isomorphisms
\begin{equation}\label{eq-limit-0.1}
(G_i)_{\mathrm{td}}\xrightarrow{\sim} G_{\mathrm{td}},\
(G_i)_{\mathrm{tor}}\xrightarrow{\sim} G_{\mathrm{tor}},\
Z(G_i)\xrightarrow{\sim} Z(G)
\end{equation}
and  a (non-canonical) isomorphism
\begin{equation}\label{eq-limit-0.2}
G_i/(G_i)_{\mathrm{ud}}\xrightarrow{\sim} G/G_{\mathrm{ud}}.
\end{equation}
\end{proposition}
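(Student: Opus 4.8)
The strategy is to reduce everything to the behaviour of the functors $G\mapsto G[n]$ and $G\mapsto G/n$ under filtered colimits, and then to invoke the characterization of mildness in Lemma \ref{lem-mild-1}(iii). Since filtered colimits are exact, $(\varinjlim G_i)[n]=\varinjlim(G_i[n])$ and $(\varinjlim G_i)/n=\varinjlim(G_i/n)$. By hypothesis the transition maps in both of these systems are isomorphisms, so for each $n$ and each $i$ the natural maps $G_i[n]\xrightarrow{\sim}G[n]$ and $G_i/n\xrightarrow{\sim}G/n$ are isomorphisms. From $G_i/n\xrightarrow{\sim}G/n$ for all $n$ one gets that $G/G_{\mathrm{div}}$ is finitely generated: indeed $G/G_{\mathrm{div}}$ is determined by the pro-system $(G/n)_n$ (a finitely generated abelian group $A$ satisfies $A/n\cong Z(G_i)/n\oplus T(G_i)/n$ with bounded cardinality), and since these stabilize to those of the mild group $G_i$, the group $G/G_{\mathrm{div}}$ has the same finitely generated structure as $G_i/(G_i)_{\mathrm{div}}$; this also gives the isomorphisms $Z(G_i)\xrightarrow{\sim}Z(G)$ and the non-canonical isomorphism \eqref{eq-limit-0.2}. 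Similarly, from $G_i[n]\xrightarrow{\sim}G[n]$ for all $n$ one reads off that $G_{\mathrm{tor}}=\varinjlim (G_i)_{\mathrm{tor}}$ is, as an abstract group, isomorphic to $(G_i)_{\mathrm{tor}}$, hence of the form (finite)$\oplus(\mathbb{Q}/\mathbb{Z})^{s}$; the divisible torsion part $G_{\mathrm{td}}$ is carved out by the condition that multiplication by $n$ be surjective, and since the $n$-torsion of $G_{\mathrm{div}}$ has exactly $n^{s(G_i)}$ elements for all $n$, Lemma \ref{lem-mild-1}(iii) applies and $G$ is mild with $s(G)=s(G_i)$, $r(G)=r(G_i)$.

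More carefully, to get the canonical isomorphisms \eqref{eq-limit-0.1} I would argue as follows. The map $(G_i)_{\mathrm{tor}}\to G_{\mathrm{tor}}$ is the colimit over $n$ of the maps $G_i[n]\to G[n]$, each of which is an isomorphism by hypothesis, so $(G_i)_{\mathrm{tor}}\xrightarrow{\sim}G_{\mathrm{tor}}$. For $(G_i)_{\mathrm{td}}$: an element of $G_{\mathrm{tor}}$ lies in $G_{\mathrm{td}}$ iff it is divisible in $G$, and because $(G_i)_{\mathrm{tor}}\to G_{\mathrm{tor}}$ is an isomorphism while divisibility of a torsion element $g\in G_i[n]$ can be tested inside the torsion subgroups (a torsion element is divisible in $G$ iff it is divisible by every prime power within $G_{\mathrm{tor}}$, equivalently within $(G_i)_{\mathrm{tor}}$ via the isomorphism), the subgroup $(G_i)_{\mathrm{td}}$ maps isomorphically onto $G_{\mathrm{td}}$. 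Concretely, since $(G_i)_{\mathrm{tor}}\cong (\mathbb{Q}/\mathbb{Z})^{s}\oplus(\text{finite})$ and the finite part is the non-divisible torsion, the divisible torsion subgroup is intrinsic to the torsion subgroup, so it transfers across the isomorphism $(G_i)_{\mathrm{tor}}\xrightarrow{\sim}G_{\mathrm{tor}}$. For $Z(G_i)\xrightarrow{\sim}Z(G)$: one has $G/(G)_{\mathrm{tor}}$ and $G_i/(G_i)_{\mathrm{tor}}$, and the transition map between these is compatible with the maps on $\otimes\mathbb{Q}$ and on $/n$; since $G/(G)_{\mathrm{div}}$ is finitely generated (already established), $Z(G)$ is identified with the free part of $G/(G)_{\mathrm{div}}$ and the stabilization of the $/n$-systems forces $\operatorname{rank}Z(G)=\operatorname{rank}Z(G_i)$ together with compatibility of the generators, giving the canonical isomorphism on the associated graded of the canonical filtration \eqref{diag-1}.

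The main obstacle is the passage from "$G$ has the numerical invariants of a mild group" to "$G$ is genuinely mild", i.e. the verification of the hypotheses of Lemma \ref{lem-mild-1}(iii) rather than merely quoting a decomposition. The subtle point is that mildness is \emph{not} closed under extensions (Remark \ref{rmk-mild-1}), so one cannot simply filter $G$ and assemble pieces; instead one must check directly that $G/G_{\mathrm{div}}$ is finitely generated and that $G_{\mathrm{div}}[n]$ has constant "slope" $n^{s}$. Both of these are consequences of the isomorphisms $G_i/n\xrightarrow{\sim}G/n$ and $G_i[n]\xrightarrow{\sim}G[n]$ combined with the fact that $G_i$ is already mild, but writing out why $G/G_{\mathrm{div}}$ is finitely generated requires a small argument: one shows $G_{\mathrm{div}}/n=0$ (as $G_{\mathrm{div}}$ is divisible) so $G/n\cong (G/G_{\mathrm{div}})/n$, and since this is finite of bounded order and $G/G_{\mathrm{div}}$ is a colimit of finitely generated groups along maps that are isomorphisms mod every $n$, a standard argument (the transition maps are injective on torsion-free quotients and surjective mod $n$, hence isomorphisms after inverting nothing) shows $G/G_{\mathrm{div}}\cong G_i/(G_i)_{\mathrm{div}}$. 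I expect this bookkeeping — rather than any deep input — to be the crux, and the inequalities from Proposition \ref{prop-mild-1} are not needed here since all invariants are literally constant along the system.
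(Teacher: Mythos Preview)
Your overall strategy — verify the hypotheses of Lemma~\ref{lem-mild-1}(iii) by transporting the invariants of $G_i$ across the isomorphisms $G_i[n]\xrightarrow{\sim}G[n]$ and $G_i/n\xrightarrow{\sim}G/n$ — is sound and is a legitimate alternative to the paper's approach, which instead builds the four-term decomposition of $G$ directly by peeling off the layers $(G_i)_{\mathrm{td}}$, $(G_i)_{\mathrm{ud}}$, $T_i$, $Z_i$ one at a time, using at each stage that the relevant piece is injective or projective so the short exact sequence of colimits splits. Your argument for $(G_i)_{\mathrm{tor}}\xrightarrow{\sim}G_{\mathrm{tor}}$ and $(G_i)_{\mathrm{td}}\xrightarrow{\sim}G_{\mathrm{td}}$ is correct (the key point, that a torsion element is divisible in $G$ iff it is divisible in $G_{\mathrm{tor}}$, is valid).

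There is, however, a genuine gap at the step ``$G/G_{\mathrm{div}}$ is a colimit of finitely generated groups along maps that are isomorphisms mod every $n$''. This presupposes $G_{\mathrm{div}}=\varinjlim (G_i)_{\mathrm{div}}$, which is \emph{not} automatic: in general the divisible subgroup of a filtered colimit can strictly contain the colimit of the divisible subgroups. Your parenthetical justification (``injective on torsion-free quotients and surjective mod $n$'') does not establish this, and knowing only that $(G/G_{\mathrm{div}})/n$ is finite for all $n$ is not enough either — the reduced torsion-free group $\prod_p\Z_p$ satisfies that but is not finitely generated. The fix is to first prove that each transition map $G_i/(G_i)_{\mathrm{div}}\to G_j/(G_j)_{\mathrm{div}}$ is an isomorphism: since both sides are finitely generated and the map induces isomorphisms on $(-)/n$ (from $G_i/n\cong G_j/n$) and on $(-)[n]$ (from $G_i[n]/(G_i)_{\mathrm{td}}[n]\cong G_j[n]/(G_j)_{\mathrm{td}}[n]$, using your torsion computation), the snake lemma for multiplication by $n$ shows the kernel and cokernel are uniquely divisible, hence zero. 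Then $\varinjlim(G_i/(G_i)_{\mathrm{div}})$ is finitely generated, so it has no nonzero divisible subgroup, which forces $\varinjlim(G_i)_{\mathrm{div}}=G_{\mathrm{div}}$ and closes the gap. Note that this is exactly the content of the paper's third short exact sequence (\ref{eq-limit-3}); once you supply it, your route and the paper's converge.
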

Here for the notion of a \emph{filtered system}, we mean \cite[I, definition 2.7]{SGA4}. The conclusion is not valid for non-filtered colimits in general.

Proof: A homomorphism $\varphi:G_i\rightarrow G_j$ in this system maps $(G_i)_{\mathrm{td}}$ into $(G_i)_{\mathrm{td}}$. Since filtered colimits preserve exactness, we have a short exact sequence
\begin{equation}\label{eq-limit-1}
0\rightarrow\varinjlim_{I}\big((G_i)_{\mathrm{td}}\big)\rightarrow \varinjlim_{I}G_i\rightarrow \varinjlim_{I}\big(G_i/(G_i)_{\mathrm{td}}\big)\rightarrow 0,
\end{equation}
Since any homomorphism $\varphi$ in the system induce isomorphisms $G_i[n]\xrightarrow{\sim}G_j[n]$, all the arrows in the system $\big((G_i)_{\mathrm{td}}\big)_{i\in I}$ are isomorphisms, so we have canonical isomorphisms
\[
(G_i)_{\mathrm{td}}\xrightarrow{\sim }\varinjlim_{I}(G_i)_{\mathrm{td}}.
\]
Therefore $\varinjlim_{I}(G_i)_{\mathrm{td}}$ is divisible, and is thus an injective abelian group. Hence the sequence (\ref{eq-limit-1}) splits. 

Now for any $i\in I$, the divisible subgroup of $G_i/(G_i)_{\mathrm{td}}$ is isomorphic to $G_{\mathrm{ud}}$, and the homomorphism $G_i/(G_i)_{\mathrm{td}}\rightarrow G_j/(G_j)_{\mathrm{td}}$ induced by $\varphi$ maps the divisible subgroup of the former into the divisible subgroup of the latter. Since filtered colimits preserve exactness, we have a short exact sequence
\begin{equation}\label{eq-limit-2}
0\rightarrow\varinjlim_{I}\big((G_i)_{\mathrm{ud}}\big)\rightarrow \varinjlim_{I}\big(G_i/(G_i)_{\mathrm{td}}\big)\rightarrow \varinjlim_{I}\big(G_i/(G_i)_{\mathrm{div}}\big)\rightarrow 0.
\end{equation}
The group $\varinjlim_{I}\big((G_i)_{\mathrm{ud}}\big)$ is still uniquely divisible, and is injective. So (\ref{eq-limit-2}) splits. 

Finally, the torsion subgroup of $G_i/(G_i)_{\mathrm{div}}$ is isomorphic to $T_i=(G_i)_{\mathrm{tor}}/(G_i)_{\mathrm{td}}$, the finite torsion part of $G_i$, and the homomorphism $G_i/(G_i)_{\mathrm{div}}\rightarrow G_j/(G_j)_{\mathrm{div}}$ induced by $\varphi$ maps the torsion subgroup of the former into the torsion subgroup of the latter.
Since filtered colimits preserve exactness, we have a short exact sequence
\begin{equation}\label{eq-limit-3}
0\rightarrow\varinjlim_{I}T_i\rightarrow \varinjlim_{I}\big(G_i/(G_i)_{\mathrm{div}}\big)\rightarrow \varinjlim_{I}Z_i\rightarrow 0,
\end{equation} 
where $Z_i=G_i/((G_i)_{\mathrm{div}}+(G_i)_{\mathrm{tor}})$. By the condition (\ref{eq-limit-0}), each  homomorphism in the system $(T_i)_{i\in I}$ and the system $(Z_i)_{i\in I}$ is an isomorphism. So $T_i\cong \varinjlim_{I}T_i$ and $Z_i\cong \varinjlim_{I}Z_i$. In particular $\varinjlim_{I}Z_i$ is free of finite rank, and (\ref{eq-limit-3}) splits. So we obtain a (non-canonical) decomposition
\[
\varinjlim_{I}G_i\cong V\oplus Z_i\oplus D_i\oplus T_i,
\]
where $D_i\cong (G_i)_{\mathrm{td}}$ is a canonical isomorphism, and $V$ is a uniquely divisible group.\pqed

\begin{proposition}\label{prop-limit-hom}
Let $(G_i)_{i\in I}$ and $(H_i)_{i\in I}$ be two filtered systems of mild abelian groups, and $(f_i:G_i\rightarrow H_i)_{i\in I}$ a system of homomorphisms, satisfying
\begin{enumerate}
   \item[(a)] for the homomorphism $\varphi_{\nu}:G_i\rightarrow G_j$ corresponding to  any arrow $\nu$ in $I$, and any integer $n$, the induced homomorphisms
\begin{equation}\label{eq-prop-limit-hom-1}
\varphi_{\nu}[n]:G_i[n]\rightarrow G_j[n],\ \varphi_{\nu}/n:G_i/n\rightarrow G_j/n
\end{equation}
are isomorphisms, and $\varphi_{\nu}$ maps torsion free elements to torsion free elements;
  \item[(b)] for the homomorphism $\psi_{\nu}:H_i\rightarrow H_j$ corresponding to  any arrow $\nu$ in $I$, and any integer $n$, the induced homomorphisms
\begin{equation}\label{eq-limit-hom-2}
\psi_{\nu}[n]:H_i[n]\rightarrow H_j[n],\ \psi_{\nu}/n: H_i/n\rightarrow H_j/n
\end{equation}
are isomorphisms, and $\psi_{\nu}$ maps torsion free elements to torsion free elements;
  \item[(c)] for any object $i$ of $I$, the homomorphism $f_i:G_i\rightarrow H_i$ satisfies (M1).
 \end{enumerate} 
 Then the colimits $G=\varinjlim_{i\in I}G_i$ and $H=\varinjlim_{i\in I}H_i$ are mild, and the induced homomorphism $f:G\rightarrow H$ satisfies (M1).
\end{proposition}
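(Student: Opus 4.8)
The plan is to reduce condition (M1) for $f$ to the same condition for a single $f_{i_0}$, exploiting that (M1) only involves the torsion--divisible parts of the groups and that these are ``constant'' along the system.

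First, note that conditions (a) and (b) contain the hypotheses of Proposition~\ref{prop-limit} for the systems $(G_i)$ and $(H_i)$. Hence $G$ and $H$ are mild, and for every $i$ the canonical maps $(G_i)_{\mathrm{td}}\to G_{\mathrm{td}}$ and $(H_i)_{\mathrm{td}}\to H_{\mathrm{td}}$ are isomorphisms, compatible with the transition maps; in particular $s(G_i)=s(G)$ and $s(H_i)=s(H)$. Any homomorphism sends divisible torsion elements to divisible torsion elements, so $f=\varinjlim_i f_i$ restricts to $f_{\mathrm{td}}\colon G_{\mathrm{td}}\to H_{\mathrm{td}}$, and the squares expressing $f=\varinjlim f_i$ show that, under the above canonical isomorphisms, $f_{\mathrm{td}}$ is identified with $(f_i)|_{(G_i)_{\mathrm{td}}}$ for every $i$.

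Next I would unwind Definition~\ref{def-cond-M1}. For mild groups $A,B$ and a homomorphism $g\colon A\to B$ one has $g(A_{\mathrm{div}})\subseteq B_{\mathrm{div}}$, and since $V(A)$ is a $\mathbb{Q}$--vector space while $B_{\mathrm{td}}$ is torsion, also $g(V(A))\subseteq V(B)$; thus $g|_{A_{\mathrm{div}}}$ is block diagonal with respect to $A_{\mathrm{div}}=V(A)\oplus A_{\mathrm{td}}$ and $B_{\mathrm{div}}=V(B)\oplus B_{\mathrm{td}}$. Given presentation data $(W^A,\Gamma^A,h^A)$, $(W^B,\Gamma^B,h^B)$ as in \ref{def-cond-M1}, a homomorphism $\tilde g\colon V(A)\oplus W^A\to V(B)\oplus W^B$ making the square of \ref{def-cond-M1} commute is therefore forced to have the form $\tilde g=g|_{V(A)}\oplus\delta$: comparing $V(B)$--components kills the block $W^A\to V(B)$, while comparing $B_{\mathrm{td}}$--components at $w=0$ shows that the block $V(A)\to W^B$ takes values in $\Gamma^B$, hence vanishes because $\Gamma^B$ is free of finite rank and $V(A)$ is divisible. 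Here $\delta\colon W^A\to W^B$ is $\mathbb{Q}$--linear with $\delta(\Gamma^A)\subseteq\Gamma^B$ and $h^B\circ(\delta\bmod\Gamma^B)=g|_{A_{\mathrm{td}}}\circ h^A$. Consequently $g$ satisfies (M1) if and only if $g|_{A_{\mathrm{td}}}$ admits such a lift to a map of lattices; in particular whether (M1) holds depends only on $g|_{A_{\mathrm{td}}}$.

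Finally, fix $i_0\in I$. By (c), $f_{i_0}$ satisfies (M1); by the previous paragraph we may choose presentation data $(W^G,\Gamma^G,h^G)$, $(W^H,\Gamma^H,h^H)$ for $G_{i_0},H_{i_0}$ and a $\mathbb{Q}$--linear $\delta\colon W^G\to W^H$ with $\delta(\Gamma^G)\subseteq\Gamma^H$ and $h^H\circ(\delta\bmod\Gamma^H)=(f_{i_0})|_{(G_{i_0})_{\mathrm{td}}}\circ h^G$. Let $c\colon(G_{i_0})_{\mathrm{td}}\xrightarrow{\ \sim\ }G_{\mathrm{td}}$ and $d\colon(H_{i_0})_{\mathrm{td}}\xrightarrow{\ \sim\ }H_{\mathrm{td}}$ be the canonical isomorphisms from the first step, and put $h^G_\infty:=c\circ h^G$, $h^H_\infty:=d\circ h^H$. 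Since $W^G$ has rank $s(G_{i_0})=s(G)$ and $W^H$ has rank $s(H_{i_0})=s(H)$, the triples $(W^G,\Gamma^G,h^G_\infty)$ and $(W^H,\Gamma^H,h^H_\infty)$ are valid presentation data for $G$ and $H$. Then
\[
h^H_\infty\circ(\delta\bmod\Gamma^H)=d\circ(f_{i_0})|_{(G_{i_0})_{\mathrm{td}}}\circ h^G=f_{\mathrm{td}}\circ c\circ h^G=f_{\mathrm{td}}\circ h^G_\infty ,
\]
the middle equality being the identification of the first step, so by the criterion of the second step (with $\tilde f:=f|_{V(G)}\oplus\delta$) the map $f$ satisfies (M1). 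The only substantive point is the observation that the lattices in (M1) interact solely with the divisible torsion summand, the $V$--blocks of $\tilde f$ being determined; once this is combined with the constancy of that summand furnished by Proposition~\ref{prop-limit}, the argument is a diagram chase with the canonical isomorphisms.
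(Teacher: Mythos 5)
Your overall strategy — transfer presentation data from a single $f_{i_0}$ to the colimit using the canonical identifications $(G_{i_0})_{\mathrm{td}}\xrightarrow{\sim}G_{\mathrm{td}}$, $(H_{i_0})_{\mathrm{td}}\xrightarrow{\sim}H_{\mathrm{td}}$ furnished by Proposition~\ref{prop-limit}, together with the observation that (M1) is controlled by the restriction to the divisible torsion summand — is sound and in fact noticeably cleaner than the paper's proof, which instead builds the image $D\subseteq D(H)$ of $V(G_i)\to D(H)$, passes to a cofinal subsystem $J$, and constructs compatible lattices $Y_i=q_i^{-1}(D)$ by hand. However, your second paragraph contains a genuine error. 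The claim that ``since $V(A)$ is a $\mathbb{Q}$-vector space while $B_{\mathrm{td}}$ is torsion, also $g(V(A))\subseteq V(B)$'' is false: the quotient map $\mathbb{Q}\to\mathbb{Q}/\mathbb{Z}$ sends $V(A)=\mathbb{Q}$ onto $B_{\mathrm{td}}=\mathbb{Q}/\mathbb{Z}$ and has $V(B)=0$. Consequently $g|_{A_{\mathrm{div}}}$ is in general only lower triangular (the block $A_{\mathrm{td}}\to V(B)$ does vanish, as torsion maps to torsion, but the block $g_{VD}\colon V(A)\to B_{\mathrm{td}}$ need not), and a lifting $\tilde g$ is not forced to be block diagonal; indeed in the example above the $V(A)\to W^B$ block is necessarily nonzero. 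This invalidates your argument for the ``if'' direction of the criterion, which is exactly the direction your third paragraph invokes when you write ``with $\tilde f:=f|_{V(G)}\oplus\delta$''.

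The gap is easily repaired, and your criterion is true as stated. Given $\delta\colon W^A\to W^B$ with $\delta(\Gamma^A)\subseteq\Gamma^B$ lifting $g|_{A_{\mathrm{td}}}$, you still need to supply the block $\tilde g_{VW}\colon V(A)\to W^B$, and it must lift the composite $g_{VD}\colon V(A)\to B_{\mathrm{td}}\cong W^B/\Gamma^B$ along the quotient map $W^B\to W^B/\Gamma^B$. Such a lift always exists, because $V(A)$ is a $\mathbb{Q}$-vector space, hence a free $\mathbb{Q}$-module, and $W^B\to W^B/\Gamma^B$ is a surjection of $\mathbb{Q}$-modules (choose a $\mathbb{Q}$-basis of $V(A)$ and lift each basis vector arbitrarily). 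With $\tilde g=\begin{pmatrix}g|_{V(A)}&0\\\tilde g_{VW}&\delta\end{pmatrix}$ the square of Definition~\ref{def-cond-M1} commutes, so (M1) indeed depends only on $g|_{A_{\mathrm{td}}}$ (through the existence of $\delta$). Once you insert this observation, your third paragraph goes through unchanged and yields a shorter, more conceptual proof than the one in the paper.
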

\emph{Proof:} By \ref{prop-limit}, $G$ and $H$ are mild abelian groups, and there are canonical isomorphisms $D(G_i)\xrightarrow{\sim} D(G)$ and $D(H_i)\xrightarrow{\sim} D(H)$. Thus we can identify $D(G_i)$ to $D(G)$, and $D(H_i)$ to $D(H)$, for every $i\in I$.
 Suppose $s=s(D(G))$, $t=s(D(H))$. Since for all arrows $\nu$ in $I$, $\varphi_{\nu}$ and $\psi_{\nu}$ map torsion free elements to torsion free elements, there are canonical decompositions
 \[
 G\cong D(G)\oplus \varinjlim_{i\in I}V(G_i),\ H\cong D(H)\oplus \varinjlim_{i\in I}V(H_i).
 \]
 \begin{comment}
 The induced map
 \[
 u_i:V(G_i)\rightarrow \varinjlim_{i\in I}V(G_i)\hookrightarrow G\xrightarrow{f} H\twoheadrightarrow D(H)
 \]
does not necessarily coincide with the  map
 \[
 v_i:V(G_i)\hookrightarrow G_i\xrightarrow{f_i} H_i\twoheadrightarrow D(H_i)=D(H).
 \]
 In fact, $u_i-v_i$ is a map 
 \[
 V(G_i)\rightarrow D(G)\xrightarrow{f|_{D(G)}} D(H).
 \]
But since $f_i:G_i\rightarrow H_i$ satisfies (M1), $u_i$ also satisfies (M1). 
\end{comment}
So there is a $\mathbb{Q}$-vector space $W_i$ of finite dimension $s$, a full rank lattice $\Gamma_i$ in $W_i$, and an isomorphism $g_i:W_i/\Gamma_i\cong D(G)$, and a $\mathbb{Q}$-vector space $U_i$ of finite dimension $t$, a full rank lattice $\Lambda_i$ in $W_i$, and an isomorphism $h_i:U_i/\Lambda_i\cong D(H)$, and moreover a commutative diagram
\[
\xymatrix{
  V(G_i)\oplus W_i \ar[d]_{(\id_{V(G_i)},p_i)} \ar[r]^{\tilde{f}_i} & V(H_i)\oplus U_i \ar[d]^{(\id_{V(H_i)},q_i)} \\
  (G_i)_{\mathrm{div}} \ar[r]^{f_i} & (H_i)_{\mathrm{div}},
}
\]
where $p_i$ is the composition $W_i\twoheadrightarrow W_i/\Gamma_i\xrightarrow{g_i} D(G)$, and $q_i$ is the composition $U_i\twoheadrightarrow U_i/\Lambda_i\xrightarrow{h_i} D(H)$. By proposition \ref{prop-mild-2}, the image of the composition 
\begin{equation}\label{eq-prop-limit-hom-2}
V(G_i)\hookrightarrow (G_i)_{\mathrm{div}}\xrightarrow{f_i}(H_i)_{\mathrm{div}}\twoheadrightarrow D(H_i)=D(H)
\end{equation}
is a divisible torsion mild abelian subgroup of $D(H)$. But an ascending chain of divisible torsion mild abelian subgroups of $D(H)$ stabilizes, so the image  of the induced map $D(G)\rightarrow D(H)$ is a divisible torsion mild abelian subgroup of $D(H)$. We denote this image by $D$, and denote  $s(D)$ by $t_{\infty}$. Since the system $I$ is filtered, to show that $f$ satisfies (M1), we can consider only the cofinal sub-system $J$ of $I$ such that for $i\in J$ the image of the composition (\ref{eq-prop-limit-hom-2}) is equal to $D$. Note that $J$ is still filtered. For $i\in J$, we set $Y_i=q_i^{-1}(D)$, thus $\phi_i:=\tilde{f}_i|_{V(G_i)}:V(G_i)\rightarrow Y_i$ is surjective. For any $i,j\in J$, and an arrow $\nu:i\rightarrow j$ in $J$, there is a commutative diagram
\begin{equation}
\begin{gathered}\label{eq-prop-limit-hom-3}
\xymatrix{
V(G_i) \ar[d] \ar[r]^{\phi_i} & Y_i \ar[r] & D \\
V(G_j) \ar[r]^{\phi_j} & Y_j \ar[ru] &.
}
\end{gathered}
\end{equation}
Taking a basis $e_1,...,e_{t_{\infty}}$ of $Y_i$, and choosing their arbitrary preimages under $\phi_i$, one can find a map $\eta_{\nu}:Y_i\rightarrow Y_j$ completing the above diagram. Since $Y_i$ and $Y_j$ are both of rank $t_{\infty}$, and map surjectively onto $D$, $\eta_{\nu}$ is surjective and is thus an isomorphism of $\mathbb{Q}$-vector spaces. Moreover, there is an exact sequence
\[
0\rightarrow \mathrm{Hom}(Y_i,\Lambda_i\cap Y_j)\rightarrow \mathrm{Hom}(Y_i,Y_j)\rightarrow \mathrm{Hom}(Y_i,D).
\]
Since $\mathrm{Hom}(Y_i,\Lambda_i\cap Y_j)=0$, the homomorphism $\eta_{\nu}$ is unique. Therefore there is a canonical isomorphism $\eta_{\nu}:Y_i\xrightarrow{\sim}Y_j$ induced by the arrow $\nu$ in $J$. Since $J$ is filtered, there is a canonical isomorphism $Y_i\cong Y_j$ for any $i,j\in J$. Now fix $i_0\in J$. Then for any $i\in J$, the map (\ref{eq-prop-limit-hom-2}) factors through $Y_{i_0}\twoheadrightarrow D$, compatibly with the transition maps $V(G_i)\rightarrow V(G_j)$. So there is a homomorphism $\varinjlim_{i\in J}V(G_i)\rightarrow Y_{i_0}$ making the following diagram commutes:
\[
\xymatrix{
  \varinjlim_{i\in J}V(G_i) \ar[r] & Y_{i_0} \ar[d] \ar@{^{(}->}[r] & U_{i_0} \ar[d]  \\
  V(G_i) \ar[u] \ar[r] & D \ar@{^{(}->}[r] & D(H).
}
\]
We take $W=W_{i_0}$, $U=U_{i_0}$, there  thus exists a commutative diagram
\[
\xymatrix{
  \varinjlim_{i\in J}V(G_i) \oplus W \ar[r] \ar[d] & \varinjlim_{i\in J}V(H_i)\oplus U \ar[d] \\
  G_{\mathrm{div}} \ar[r] & H_{\mathrm{div}}
}
\]
satisfying  definition \ref{def-cond-M1}.
\pqed
\begin{proposition}\label{prop-limit-endo}
Let $(G_i)_{i\in I}$ be a filtered systems of mild abelian groups, and $(f_i:G_i\rightarrow G_i)_{i\in I}$ a system of endomorphisms, satisfying
\begin{enumerate}
   \item[(a)] for the homomorphism $\varphi_{\nu}:G_i\rightarrow G_j$ corresponding to  any arrow $\nu$ in $I$, and any integer $n$, the induced homomorphisms
\begin{equation}\label{eq-limit-endo-0}
\varphi_{\nu}[n]:G_i[n]\rightarrow G_j[n],\ \varphi_{\nu}/n:G_i/n\rightarrow G_j/n
\end{equation}
are isomorphisms, and $\varphi_{\nu}$ maps torsion free elements to torsion free elements;
  \item[(b)] for any object $i$ of $I$, the endomorphism $f_i:G_i\rightarrow G_i$ satisfies (M2).
 \end{enumerate} 
 Then the colimit $G=\varinjlim_{i\in I}G_i$ is mild, and the induced endomorphism $f:G\rightarrow G$ satisfies (M2).
\end{proposition}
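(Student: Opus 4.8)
The plan is to run the proof of Proposition~\ref{prop-limit-hom} again, this time with $H_i=G_i$ and $f_i$ an endomorphism, and then to observe that the resulting colimit datum can be arranged with a single $\mathbb{Q}$-vector space on source and target, so that it verifies (M2) rather than merely (M1). By Proposition~\ref{prop-limit} the colimit $G$ is mild, and the canonical isomorphisms $(G_i)_{\mathrm{td}}\xrightarrow{\sim}G_{\mathrm{td}}$ of (\ref{eq-limit-0.1}) allow us to identify all the $D(G_i)$ with one divisible torsion group $D=G_{\mathrm{td}}$; moreover, since each $\varphi_\nu$ preserves torsion-free elements, $G_{\mathrm{div}}=V(G)\oplus D$ with $V(G)=\varinjlim_i V(G_i)$. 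Under these identifications the triangle $(G_i)_{\mathrm{td}}\to (G_j)_{\mathrm{td}}\to G_{\mathrm{td}}$ of canonical isomorphisms commutes, so every transition map $\varphi_\nu$ restricts to the identity on $D$, and the relation $\varphi_\nu\circ f_i=f_j\circ\varphi_\nu$ then forces all the restrictions $f_i|_D\colon D\to D$ to agree with one endomorphism $g$, namely the restriction of the colimit endomorphism $f$ to $G_{\mathrm{td}}$. This is the one genuinely new input beyond Proposition~\ref{prop-limit-hom}.

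Next, for each $i$ choose, as furnished by (M2), a $\mathbb{Q}$-vector space $W_i$ with a full-rank lattice $\Gamma_i$, an isomorphism $h_i\colon W_i/\Gamma_i\xrightarrow{\sim}D$, and an endomorphism $\tilde f_i$ of $V(G_i)\oplus W_i$ lying over $f_i$, with $p_i\colon W_i\twoheadrightarrow W_i/\Gamma_i\xrightarrow{h_i}D$. Exactly as in the proof of Proposition~\ref{prop-limit-hom}: the composite $V(G_i)\hookrightarrow (G_i)_{\mathrm{div}}\xrightarrow{f_i}(G_i)_{\mathrm{div}}\twoheadrightarrow D$ factors through $p_i$ (hence satisfies (M1)), so by Proposition~\ref{prop-mild-2} its image is divisible torsion and mild; these images form an ascending chain of subgroups of $D$, which therefore stabilizes; passing to a filtered cofinal subsystem $J$ on which the image is a fixed subgroup $D^{\mathrm{st}}$ and setting $Y_i=p_i^{-1}(D^{\mathrm{st}})$, one obtains for each arrow $\nu$ of $J$ a unique isomorphism of $\mathbb{Q}$-vector spaces $\eta_\nu\colon Y_i\xrightarrow{\sim}Y_j$ compatible with the surjections onto $D^{\mathrm{st}}$ (uniqueness because $\mathrm{Hom}$ of a $\mathbb{Q}$-vector space into a finitely generated free abelian group vanishes), hence canonical identifications $Y_i\cong Y_j$ for all $i,j\in J$. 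Observe also that $\tilde f_i(V(G_i))\subseteq V(G_i)\oplus Y_i$ for $i\in J$.

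Finally fix $i_0\in J$ and set $W=W_{i_0}$, $\Gamma=\Gamma_{i_0}$, $h=h_{i_0}$ (so $p=p_{i_0}$), which has rank $s(G)$ as required. Define $\tilde f\colon V(G)\oplus W\to V(G)\oplus W$ by taking on $W$ the map $\tilde f_{i_0}|_{W}$, which lands in $V(G_{i_0})\oplus W\subseteq V(G)\oplus W$, and on $V(G)=\varinjlim_{i\in J}V(G_i)$ the colimit of the maps $V(G_i)\to V(G_i)\oplus Y_i$ induced by $\tilde f_i$, after transporting the first coordinate along the structure maps $V(G_i)\to V(G)$ and the second along the canonical identifications $Y_i\cong W$; this colimit exists because these data are compatible with the transition maps, the compatibility on the first coordinate coming from $\varphi_\nu\circ f_i=f_j\circ\varphi_\nu$ and that on the second from the defining property of the $\eta_\nu$. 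One then checks directly that $\tilde f$ is an endomorphism of $V(G)\oplus W$ fitting into the square of Definition~\ref{def-cond-M2}: commutativity on $V(G)$ is the colimit of the commutative squares at the objects of $J$, while commutativity on $W$ reduces, using $p=p_{i_0}$ together with the square at $i_0$ and the fact that $f_{i_0}$ maps $D$ into $D$, to the identity $f_{i_0}|_D=g=f|_{G_{\mathrm{td}}}$. The main obstacle is precisely this last piece of bookkeeping: one must arrange the same space $W$ on both sides, which works only because the transition maps act as the identity on $D$ (so that $\tilde f_{i_0}$ already supplies a correct lift over $W$) and because the spaces $Y_i$ glue canonically along $J$; everything else is formally identical to the proof of Proposition~\ref{prop-limit-hom}.
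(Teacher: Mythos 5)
Your proof is correct and fills in exactly what the paper itself leaves out (the paper's ``proof'' of this proposition is just the one-line remark that it is similar to Proposition~\ref{prop-limit-hom}). You have correctly identified the one genuinely new point, namely that the diagram of Definition~\ref{def-cond-M2} requires a single $W$ on both sides, and that this is possible because the transition maps become the identity on $D=G_{\mathrm{td}}$ under the canonical identifications of (\ref{eq-limit-0.1}), forcing all $f_i|_D$ to coincide with $f|_{G_{\mathrm{td}}}$; the rest is the argument of Proposition~\ref{prop-limit-hom} with $H_i=G_i$, $U_{i_0}=W_{i_0}$. (One small notational slip: the canonical identification in your colimit construction is $Y_i\cong Y_{i_0}\subseteq W$ rather than $Y_i\cong W$, since $Y_{i_0}$ need not be all of $W_{i_0}$; but your use of it is correct.)
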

Proof: The proof is similar to that of proposition \ref{eq-prop-limit-hom-2}. We omit it.\pqed

\subsubsection{Mildness via quasi-isomorphisms after tensoring \texorpdfstring{$\mathbb{Z}/n$}{}}
Let $K_\bullet$ be a chain complex of free abelian groups, i.e., we have a family of free abelian groups $\{K_i\}_{i\in \mathbb{Z}}$  and group homomorphisms $d:K_i\rightarrow K_{i-1}$ such that $d^2=0$. For every abelian group $A$, by the universal coefficient theorem, there are (non-canonically) split exact sequences
\begin{eqnarray}\label{uct1}
0\rightarrow H_i(K_\bullet)\otimes A\rightarrow H_i(K_\bullet\otimes A)\rightarrow\Tor_{1}(H_{i-1}(K_\bullet), A)\rightarrow 0
\end{eqnarray}
and
\begin{eqnarray}\label{uct2}
0\rightarrow \Ext^{1}(H_{i-1}(K_\bullet), A)\rightarrow H^{i}(\Hom(K_\bullet,A))\rightarrow \Hom(H_i(K_\bullet),A)\rightarrow 0.
\end{eqnarray}

\begin{proposition}\label{prop-comparison1}
Let $K_\bullet$ and $L_\bullet$ be two chain complexes of free abelian groups, and $\phi: K_\bullet\rightarrow L_\bullet$ a homomorphism of chain complexes. Suppose that $\phi\otimes \mathbb{Z}/n$ are quasi-isomorphisms for every $n\in\mathbb{Z}$, and $H_i(L_\bullet)$  are mild abelian groups for all $i$. Then for any $i\in\mathbb{Z}$, $H_{i}(K_\bullet)$ is a mild abelian group, and 
\begin{eqnarray}\label{eq-comparison1-1}
s(H_{i-1}(K_\bullet))+r(H_{i}(K_\bullet))=s(H_{i-1}(L_\bullet))+r(H_{i}(L_\bullet)).
\end{eqnarray}
 Moreover, we have canonical isomorphisms
\begin{eqnarray}\label{eq-comparison1-2}
T(H_i(K_\bullet))\cong T(H_i(L_\bullet)).
\end{eqnarray}
\end{proposition}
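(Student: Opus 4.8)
The plan is to exploit the two universal coefficient sequences (\ref{uct1}) and (\ref{uct2}) together with the hypothesis that $\phi\otimes\mathbb{Z}/n$ is a quasi-isomorphism for every $n$, in order to transport information about $H_*(L_\bullet)$ to $H_*(K_\bullet)$. First I would observe that, by applying the universal coefficient theorem with $A=\mathbb{Z}/n$ to both complexes and using that $\phi\otimes\mathbb{Z}/n$ induces an isomorphism on homology, one gets for every $n$ and every $i$ a commutative diagram with split exact rows relating $H_i(K_\bullet)\otimes\mathbb{Z}/n$, $H_i(K_\bullet\otimes\mathbb{Z}/n)$, $\Tor_1(H_{i-1}(K_\bullet),\mathbb{Z}/n)$ to the corresponding groups for $L_\bullet$. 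Since $H_i(K_\bullet)\otimes\mathbb{Z}/n = H_i(K_\bullet)/n$ and $\Tor_1(H_{i-1}(K_\bullet),\mathbb{Z}/n)=H_{i-1}(K_\bullet)[n]$, I would extract from this (by a diagram chase, using that the extension splits and counting orders, or by a small induction on the $n$ appearing) the conclusion that $\phi$ induces isomorphisms
\[
H_i(K_\bullet)/n \xrightarrow{\ \sim\ } H_i(L_\bullet)/n,\qquad H_i(K_\bullet)[n]\xrightarrow{\ \sim\ } H_i(L_\bullet)[n]
\]
for all $i$ and all $n$. This is the technical heart of the argument: the split UCT sequences are only non-canonically split, so one has to argue with orders of finite subquotients rather than with a direct splitting of the map itself; the cleanest route is to note that $\#\big(H_i(K_\bullet\otimes\mathbb{Z}/n)\big)=\#\big(H_i(K_\bullet)/n\big)\cdot\#\big(H_{i-1}(K_\bullet)[n]\big)$ and to run an induction showing the two displayed maps are injective and surjective simultaneously, using $\phi$ compatibility.

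Next, granting that $\phi$ induces isomorphisms on $n$-torsion and mod-$n$ reductions for all $n$, I would apply the mildness criterion of lemma \ref{lem-mild-1}(iii): $H_i(L_\bullet)$ mild means $H_i(L_\bullet)/(H_i(L_\bullet))_{\mathrm{div}}$ is finitely generated and the $n$-torsion of $(H_i(L_\bullet))_{\mathrm{div}}$ has order $n^{s}$ for a fixed $s$ independent of $n$. Because $H_i(K_\bullet)[n]\cong H_i(L_\bullet)[n]$ for all $n$, the torsion subgroups of $H_i(K_\bullet)$ and $H_i(L_\bullet)$ have the same finite $n$-torsion for every $n$; in particular $(H_i(K_\bullet))_{\mathrm{tor}}[n]$ has order of the form $n^{s}\cdot(\text{bounded})$, so the divisible-torsion part of $H_i(K_\bullet)$ again has $n$-torsion of order exactly $n^{s}$, giving $s(H_i(K_\bullet))=s(H_i(L_\bullet))$ and a finite torsion part $T(H_i(K_\bullet))$ canonically isomorphic to $T(H_i(L_\bullet))$ (both being identified, via the $[n]$-isomorphisms for suitable large $n$, with the non-divisible part of the common torsion subgroup) — this yields (\ref{eq-comparison1-2}). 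For finite generation of $H_i(K_\bullet)/(H_i(K_\bullet))_{\mathrm{div}}$: the mod-$n$ isomorphism $H_i(K_\bullet)/n\cong H_i(L_\bullet)/n$ forces $H_i(K_\bullet)/n$ to be finite of bounded rank, and combined with the control on torsion this shows $H_i(K_\bullet)/(H_i(K_\bullet))_{\mathrm{div}}$ is finitely generated, hence $H_i(K_\bullet)$ is mild by lemma \ref{lem-mild-1}(iii).

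Finally, for the numerical identity (\ref{eq-comparison1-1}), I would compute $\#\big(H_i(K_\bullet)/n\big)$ in two ways. On one hand it equals $n^{r(H_i(K_\bullet))}\cdot\#\big(T(H_i(K_\bullet))/n\big)$; on the other hand, via (\ref{uct1}) with $A=\mathbb{Z}/n$ and the fact that $\phi\otimes\mathbb{Z}/n$ is a quasi-isomorphism,
\[
\#\big(H_i(K_\bullet\otimes\mathbb{Z}/n)\big)=\#\big(H_i(K_\bullet)/n\big)\cdot\#\big(H_{i-1}(K_\bullet)[n]\big)=\#\big(H_i(L_\bullet\otimes\mathbb{Z}/n)\big),
\]
and the analogous factorization holds for $L_\bullet$. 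Since $\#\big(H_{i-1}(K_\bullet)[n]\big)=\#\big(H_{i-1}(L_\bullet)[n]\big)$ already, cancelling gives $\#\big(H_i(K_\bullet)/n\big)=\#\big(H_i(L_\bullet)/n\big)$, i.e. $n^{r(H_i(K_\bullet))}\cdot\#(T(H_i(K_\bullet))/n)=n^{r(H_i(L_\bullet))}\cdot\#(T(H_i(L_\bullet))/n)$; letting $n\to\infty$ along, say, powers of a fixed prime and using $T(H_i(K_\bullet))\cong T(H_i(L_\bullet))$, the bounded factors cancel and one reads off $r(H_i(K_\bullet))=r(H_i(L_\bullet))$. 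Then the equality of $s$-invariants established above gives
\[
s(H_{i-1}(K_\bullet))+r(H_i(K_\bullet))=s(H_{i-1}(L_\bullet))+r(H_i(L_\bullet)),
\]
though in fact the argument proves the stronger $r(H_i(K_\bullet))=r(H_i(L_\bullet))$ and $s(H_i(K_\bullet))=s(H_i(L_\bullet))$ term by term. I expect the main obstacle to be the first step — deducing the $[n]$- and $/n$-isomorphisms for $\phi$ from the non-canonically split UCT squares — which requires a careful order-counting induction rather than a formal diagram chase; everything after that is bookkeeping with lemma \ref{lem-mild-1}(iii).
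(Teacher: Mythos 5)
Your first step — deducing from the UCT squares that $\phi$ induces isomorphisms $H_i(K_\bullet)/n\to H_i(L_\bullet)/n$ and $H_i(K_\bullet)[n]\to H_i(L_\bullet)[n]$ — is false, not merely delicate. The UCT comparison only gives that $a:H_i(K_\bullet)/n\to H_i(L_\bullet)/n$ is injective, that $b:H_{i-1}(K_\bullet)[n]\to H_{i-1}(L_\bullet)[n]$ is surjective, and that $\Coker a\cong\Ker b$; the order identity $\#(H_i(K_\bullet)/n)\cdot\#(H_{i-1}(K_\bullet)[n])=\#(H_i(L_\bullet)/n)\cdot\#(H_{i-1}(L_\bullet)[n])$ is perfectly consistent with a shift of mass between the $[n]$-term in degree $i-1$ and the $/n$-term in degree $i$. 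A concrete counterexample: let $K_\bullet$ be a two-term free resolution of $\mathbb{Q}/\mathbb{Z}$ sitting in degrees $1,0$ and let $L_\bullet=\mathbb{Z}$ concentrated in degree $1$; one can choose $\phi_1:K_1\to\mathbb{Z}$ so that the cone $C(\phi)$ has $H_1(C(\phi))\cong\mathbb{Q}$ and $H_i(C(\phi))=0$ otherwise, equivalently $\phi\otimes\mathbb{Z}/n$ is a quasi-isomorphism for every $n$. Then $H_0(K_\bullet)[n]\cong\mathbb{Z}/n$ while $H_0(L_\bullet)[n]=0$, and $H_1(K_\bullet)/n=0$ while $H_1(L_\bullet)/n\cong\mathbb{Z}/n$. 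This also refutes your closing claim that $r$ and $s$ are preserved term by term: here $s(H_0(K_\bullet))=1\neq 0=s(H_0(L_\bullet))$ and $r(H_1(K_\bullet))=0\neq 1=r(H_1(L_\bullet))$, although the sum $s(H_0)+r(H_1)$ equals $1$ on both sides, exactly as (\ref{eq-comparison1-1}) predicts. The proposition is stated as a sum because that is all that is true, not because of a limitation of the method of proof.

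The paper sidesteps this by passing to the cone $C(\phi)$: since $C(\phi)\otimes\mathbb{Z}/n$ is acyclic for all $n$, the UCT forces each $H_i(C(\phi))$ to be uniquely divisible, and the long exact sequence of the cone then shows that $\phi_*:H_i(K_\bullet)\to H_i(L_\bullet)$ has divisible kernel and uniquely divisible cokernel. Consequently $H_i(K_\bullet)$ splits as the divisible (hence injective) image of $H_{i+1}(C(\phi))$ plus a kernel inside the mild group $H_i(L_\bullet)$; the latter is mild by Proposition \ref{prop-mild-1}(i), and the mildness of the divisible summand together with (\ref{eq-comparison1-1}) and (\ref{eq-comparison1-2}) then come from the finiteness of $H_i(K_\bullet\otimes\mathbb{Z}/n)\cong H_i(L_\bullet\otimes\mathbb{Z}/n)$ and Lemma \ref{lem-mild-1}(iii). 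The invariant actually controlled by the hypothesis is $H_i(K_\bullet\otimes\mathbb{Z}/n)$ as a whole, not its two graded UCT pieces separately — that is precisely what the cone argument exploits and what a diagram chase on the split UCT sequences cannot recover.
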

Proof:  Let $C(\varphi)$ be a cone of $\varphi$. Then $C(\varphi)$ is also a complex of free abelian groups, and $C(\varphi)\otimes \mathbb{Z}/n$ is a cone of $\varphi\otimes \mathbb{Z}/n$ for every $n\in \mathbb{Z}$. One has a long exact sequence
\begin{multline*}
\cdots \rightarrow H_{i}(K_\bullet\otimes \mathbb{Z}/n) \rightarrow H_{i}(L_\bullet\otimes \mathbb{Z}/n)
\rightarrow H_{i}(C(\varphi)\otimes \mathbb{Z}/n)\rightarrow\\
 H_{i-1}(K_\bullet\otimes \mathbb{Z}/n)\rightarrow
H_{i-1}(L_\bullet\otimes \mathbb{Z}/n)\rightarrow\cdots
\end{multline*}
So $H_{i}(C(\varphi)\otimes \mathbb{Z}/n)=0$, which by  universal coefficient theorem (\ref{uct1}) implies that $H_{i}(C(\varphi))$ is uniquely divisible. Thus the image of $H_{i+1}(C(\varphi))$ in $H_{i}(K_\bullet)$ is divisible, and  
\[
H_{i}(K_\bullet)/\mathrm{Im}(H_{i+1}(C(\varphi)))
\]
maps injectively into $H_{i}(L_\bullet)$. Since $H_i(C(\phi))$ is uniquely divisible and $H_i(L_\bullet)$ is mild, by proposition \ref{prop-mild-1} (i) $H_{i}(K_\bullet)/\mathrm{Im}(H_{i+1}(C(\varphi)))$ is mild. Since $\mathrm{Im}(H_{i+1}(C(\varphi)))$ is an injective abelian group, one has
\[
H_i(K_\bullet)=\mathrm{Im}(H_{i+1}(C(\varphi)))\oplus H_{i}(K_\bullet)/\mathrm{Im}(H_{i+1}(C(\varphi))).
\]
By (\ref{uct1}) and lemma \ref{lem-mild-1} (iii) one easily sees that  $H_{i}(K_\bullet)$ is mild.
\pqed

\begin{proposition}\label{prop-comparison2}
Let $K_\bullet$, $L_\bullet$, $K'_\bullet$, $L'_\bullet$ be  chain complexes of free abelian groups, and
\begin{equation}
\begin{gathered}\label{eq-comparison2-1}
\xymatrix{
  K_\bullet \ar[r]^{\phi} \ar[d]_{\kappa} & L_\bullet \ar[d]^{\lambda} \\
  K'_\bullet \ar[r]^{\phi'} & L'_\bullet
}
\end{gathered}
\end{equation}
be a commutative diagram of morphisms of complexes.  Suppose that $\phi\otimes \mathbb{Z}/n$ and $\phi'\otimes \mathbb{Z}/n$ are quasi-isomorphisms for every $n\in\mathbb{Z}$, and $H_i(L_\bullet)$ and $H_i(L'_\bullet )$ are finitely generated abelian groups for all $i$. Then for any $i\in\mathbb{Z}$, $H_{i}(K_\bullet)$ and $H_i(K'_\bullet)$ are mild abelian groups, and the induced homomorphism $H_i(\kappa):H_{i}(K_\bullet)\rightarrow H_i(K'_\bullet)$ satisfies the condition (M1).  
\end{proposition}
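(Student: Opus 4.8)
\emph{Proof.} Mildness of $H_i(K_\bullet)$ and $H_i(K'_\bullet)$ is immediate from proposition \ref{prop-comparison1}, applied to $\phi$ and to $\phi'$, since a finitely generated abelian group is mild with $s=0$. The content is therefore that $f:=H_i(\kappa)$ satisfies (M1), and the plan is to carry the cones used in proposition \ref{prop-comparison1} along the square (\ref{eq-comparison2-1}). Concretely, the commutativity of (\ref{eq-comparison2-1}) makes $(\kappa_{\bullet-1},\lambda_\bullet)$ into a morphism of chain complexes $C(\phi)\to C(\phi')$ which is compatible with the inclusions $L_\bullet\hookrightarrow C(\phi)$, $L'_\bullet\hookrightarrow C(\phi')$, hence with the associated long exact homology sequences.

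First I would recall, as in the proof of proposition \ref{prop-comparison1}, that $C(\phi)\otimes\mathbb{Z}/n$ and $C(\phi')\otimes\mathbb{Z}/n$ are acyclic for every $n$, so by the universal coefficient sequence (\ref{uct1}) the groups $M:=H_{i+1}(C(\phi))$ and $M':=H_{i+1}(C(\phi'))$ are $\mathbb{Q}$-vector spaces. Then I would analyse the connecting map $\delta\colon M\to H_i(K_\bullet)$: its kernel $N:=\mathrm{Im}\big(H_{i+1}(L_\bullet)\to M\big)$ is finitely generated and torsion-free, hence free of some finite rank $r$; its image is divisible, hence an injective abelian group and a direct summand of $H_i(K_\bullet)$ whose complement embeds into the finitely generated group $H_i(L_\bullet)$, so that $\mathrm{Im}(\delta)=(H_i(K_\bullet))_{\mathrm{div}}$ and $\delta$ induces a canonical isomorphism $\bar\delta\colon M/N\xrightarrow{\sim}(H_i(K_\bullet))_{\mathrm{div}}$. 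Writing $M_0:=\mathbb{Q}\!\cdot\!N\subseteq M$, the torsion subgroup of $M/N$ is $M_0/N\cong(\mathbb{Q}/\mathbb{Z})^{r}$ and $M/M_0$ is torsion-free, so $\bar\delta$ carries $M_0/N$ isomorphically onto $(H_i(K_\bullet))_{\mathrm{td}}$ and $s(H_i(K_\bullet))=r$. Running the same argument for $\phi'$ produces $N'\subseteq M'_0:=\mathbb{Q}\!\cdot\!N'\subseteq M'$ and a canonical isomorphism $\bar\delta'\colon M'/N'\xrightarrow{\sim}(H_i(K'_\bullet))_{\mathrm{div}}$, with $\mathrm{rank}\,N'=s(H_i(K'_\bullet))$.

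Next, the morphism of cones induces a $\mathbb{Q}$-linear map $g\colon M\to M'$ with $g(N)\subseteq N'$, and the naturality of the connecting homomorphism gives $\delta'\circ g=f\circ\delta$, equivalently $\bar\delta'\circ\bar g=f\circ\bar\delta$ for the induced map $\bar g\colon M/N\to M'/N'$. To produce the data of definition \ref{def-cond-M1} I would pick $\mathbb{Q}$-linear splittings $M=M_0\oplus M_1$ and $M'=M'_0\oplus M'_1$, and set $W_1:=M_0$, $\Gamma_1:=N$, $h_1:=\bar\delta|_{M_0/N}$ and $V(H_i(K_\bullet)):=\bar\delta(\text{image of }M_1\text{ in }M/N)$, and symmetrically $W_2,\Gamma_2,h_2,V(H_i(K'_\bullet))$. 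Then $\Gamma_1$ is a full rank lattice in $W_1$, $W_1$ has rank $s(H_i(K_\bullet))$, $h_1$ is an isomorphism onto $(H_i(K_\bullet))_{\mathrm{td}}$, and $V(H_i(K_\bullet))$ is a torsion-free divisible complement of $(H_i(K_\bullet))_{\mathrm{td}}$ in $(H_i(K_\bullet))_{\mathrm{div}}$ (which extends to a decomposition as in definition \ref{abelian2}, since $H_i(K_\bullet)/(H_i(K_\bullet))_{\mathrm{div}}$ is finitely generated); moreover the splitting identifies $V(H_i(K_\bullet))\oplus W_1$ with $M$ in such a way that the left vertical arrow of the diagram in definition \ref{def-cond-M1} becomes the composite $M\twoheadrightarrow M/N\xrightarrow{\bar\delta}(H_i(K_\bullet))_{\mathrm{div}}$, and likewise on the right with $M'$. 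Taking $\tilde f:=g$ --- which is permitted to be non-diagonal for these splittings --- the diagram of definition \ref{def-cond-M1} commutes precisely because $\bar\delta'\circ\bar g=f\circ\bar\delta$. Hence $f$ satisfies (M1).

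I expect the main obstacle to be not a single computation but the two identifications: that $\bar\delta$ is an \emph{isomorphism} $M/N\xrightarrow{\sim}(H_i(K_\bullet))_{\mathrm{div}}$ (this essentially repeats the bookkeeping of proposition \ref{prop-comparison1} and uses that both $H_i(L_\bullet)$ and $H_{i+1}(L_\bullet)$ are finitely generated), and that the chosen splittings make the square of definition \ref{def-cond-M1} commute on the nose. The observation that keeps the latter routine is that $\tilde f$ need not respect the direct sum decompositions, so the single map $g=H_{i+1}$ of the cone morphism already works by naturality of the boundary homomorphism; beyond this, I anticipate only careful tracking of the cone conventions to ensure the ladder of long exact sequences is strictly commutative, and no genuinely new difficulty.
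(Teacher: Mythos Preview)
Your proof is correct and follows essentially the same approach as the paper: form the mapping cones of $\phi$ and $\phi'$, use that their homology groups $M=H_{i+1}(C(\phi))$ and $M'=H_{i+1}(C(\phi'))$ are $\mathbb{Q}$-vector spaces receiving a finitely generated lattice from $H_{i+1}(L_\bullet)$ (resp.\ $H_{i+1}(L'_\bullet)$) and surjecting onto $H_i(K_\bullet)_{\mathrm{div}}$ (resp.\ $H_i(K'_\bullet)_{\mathrm{div}}$), and then read off (M1) from the induced ladder of long exact sequences. The paper's argument is more telegraphic---it simply asserts that the image of $\theta_{i+1}$ is a lattice and that $\tau_{i+1}$ surjects onto the divisible part, then declares (M1) ``by definition''---whereas you spell out the choice of splittings $M=M_0\oplus M_1$, the identification $W_1=M_0$, $\Gamma_1=N$, and the verification that $\tilde f=g$ makes the square commute via naturality of the connecting homomorphism; but the content is the same.
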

Proof: The first statement is shown in proposition \ref{prop-comparison1}. For the second statement, consider  cones of $\phi$ and $\phi'$. There is a commutative diagram
\begin{equation}
\begin{gathered}\label{eq-comparision2-2}
\xymatrix{
  H_{i+1}(L_\bullet) \ar[d]_{H_{i+1}(\lambda)} \ar[r]^{\theta_{i+1}} &
 H_{i+1}(C(\varphi)) \ar[d] \ar[r]^{\tau_{i+1}} &
 H_{i}(K_\bullet) \ar[r] \ar[d]_{H_{i}(\kappa)} &
H_{i}(L_\bullet) \ar[d]_{H_i(\lambda)} \\
H_{i+1}(L'_\bullet)  \ar[r]^{\theta'_{i+1}} &
 H_{i+1}(C(\varphi'))  \ar[r]^{\tau'_{i+1}} &
 H_{i}(K'_\bullet) \ar[r]  &
H_{i}(L'_\bullet) 
}
\end{gathered}
\end{equation}
and by the proof of proposition \ref{prop-comparison1}, $H_{i+1}(C(\varphi))$ and $H_{i+1}(C(\varphi'))$ are uniquely divisible. By our assumption $H_{i}(L_\bullet)$ and $H_{i}(L'_\bullet)$ are finitely generated, thus $\tau_{i+1}$ maps $H_{i+1}(C(\varphi))$ onto $H_{i}(K_\bullet)_{\mathrm{div}}$, (resp., $\tau'_{i+1}$ maps $H_{i+1}(C(\varphi'))$ onto $H_{i}(K'_\bullet)_{\mathrm{div}}$), and the image of $\theta_{i+1}$ (resp. the image of $\theta'_{i+1}$) is a lattice in $H_{i+1}(C(\varphi))$ (a lattice in $H_{i+1}(C(\varphi'))$). Thus by definition $H_i(\kappa)$ satisfies the condition (M1).\pqed

\begin{proposition}\label{prop-comparison3}
Let $K_\bullet$, $L_\bullet$ be  chain complexes of free abelian groups, and
\begin{equation}
\begin{gathered}\label{eq-comparison3-1}
\xymatrix{
  K_\bullet \ar[r]^{\phi} \ar[d]_{\kappa} & L_\bullet \ar[d]^{\lambda} \\
  K_\bullet \ar[r]^{\phi} & L_\bullet
}
\end{gathered}
\end{equation}
be a commutative diagram of morphisms of complexes.  Suppose that $\phi\otimes \mathbb{Z}/n$ are quasi-isomorphisms for every $n\in\mathbb{Z}$, and $H_i(L_\bullet)$  are finitely generated abelian groups for all $i$. Then for any $i\in\mathbb{Z}$, $H_{i}(K_\bullet)$ is a mild abelian groups, and the induced endomorphism $H_i(\kappa):H_{i}(K_\bullet)\rightarrow H_i(K_\bullet)$ satisfies the condition (M2).  
\end{proposition}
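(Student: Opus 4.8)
The plan is to run the cone argument of Proposition~\ref{prop-comparison2} in the degenerate situation $K'_\bullet=K_\bullet$, $L'_\bullet=L_\bullet$, $\varphi'=\varphi$, in which the two vertical arrows $\kappa$ and $\lambda$ become endomorphisms. Mildness of $H_i(K_\bullet)$ for all $i$ is already Proposition~\ref{prop-comparison1} (whose hypotheses are implied by those here), so only the assertion about (M2) is new. First I would observe that the commutativity of (\ref{eq-comparison3-1}), i.e.\ $\lambda\varphi=\varphi\kappa$, is exactly what makes the pair $(\kappa,\lambda)$ into an endomorphism $c\colon C(\varphi)\rightarrow C(\varphi)$ of the cone (on $C(\varphi)_n=K_{n-1}\oplus L_n$ it is $(\kappa_{n-1},\lambda_n)$, and compatibility with the cone differential is precisely that identity). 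Taking homology yields a commutative ladder whose two rows are copies of the long exact sequence of the cone and whose vertical maps are $H_{i+1}(\lambda)$, $H_{i+1}(c)$, $H_i(\kappa)$, $H_i(\lambda)$; by the proof of Proposition~\ref{prop-comparison1}, $H_{i+1}(C(\varphi))$ is uniquely divisible, i.e.\ a $\mathbb{Q}$-vector space.

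Next I would extract the lattice data as in the proof of Proposition~\ref{prop-comparison2}. Write $\theta_{i+1}\colon H_{i+1}(L_\bullet)\to H_{i+1}(C(\varphi))$ and $\tau_{i+1}\colon H_{i+1}(C(\varphi))\to H_i(K_\bullet)$ for the connecting maps; by exactness $\Gamma:=\mathrm{Im}(\theta_{i+1})=\ker(\tau_{i+1})$, and, being a quotient of the finitely generated group $H_{i+1}(L_\bullet)$ sitting inside the torsion-free group $H_{i+1}(C(\varphi))$, it is free of finite rank. The subgroup $\mathrm{Im}(\tau_{i+1})$ is divisible, hence lies in $H_i(K_\bullet)_{\mathrm{div}}$; and $H_i(K_\bullet)/\mathrm{Im}(\tau_{i+1})$ embeds in the finitely generated group $H_i(L_\bullet)$, hence is reduced --- these two facts force $\mathrm{Im}(\tau_{i+1})=H_i(K_\bullet)_{\mathrm{div}}$, so $\tau_{i+1}$ induces an isomorphism $H_{i+1}(C(\varphi))/\Gamma\xrightarrow{\sim}H_i(K_\bullet)_{\mathrm{div}}$. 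Let $W\subseteq H_{i+1}(C(\varphi))$ be the $\mathbb{Q}$-span of $\Gamma$ and fix a $\mathbb{Q}$-linear complement $V'$, so $H_{i+1}(C(\varphi))=V'\oplus W$ and $H_i(K_\bullet)_{\mathrm{div}}\cong V'\oplus(W/\Gamma)$; since $V'$ is a $\mathbb{Q}$-vector space and $W/\Gamma\cong(\mathbb{Q}/\mathbb{Z})^{r}$ with $r$ the rank of $\Gamma$, we may take $V(H_i(K_\bullet))=V'$, $(H_i(K_\bullet))_{\mathrm{td}}=W/\Gamma$, $s(H_i(K_\bullet))=r$, and under this identification $\tau_{i+1}$ becomes the projection $(\id,p)$ with $p\colon W\twoheadrightarrow W/\Gamma\xrightarrow{\sim}H_i(K_\bullet)_{\mathrm{td}}$.

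It remains to produce $\tilde{f}$. Since $H_{i+1}(c)$ commutes with $\theta_{i+1}$ it carries $\Gamma=\mathrm{Im}(\theta_{i+1})$ into itself, and, being $\mathbb{Q}$-linear, it then preserves $W$; transporting $H_{i+1}(c)$ through the isomorphism $H_{i+1}(C(\varphi))\cong V(H_i(K_\bullet))\oplus W$ gives an endomorphism $\tilde{f}$ of $V(H_i(K_\bullet))\oplus W$. The remaining square of the ladder, $\tau_{i+1}\circ H_{i+1}(c)=H_i(\kappa)\circ\tau_{i+1}$, together with $\tau_{i+1}=(\id,p)$ and the fact (automatic for a homomorphism) that $H_i(\kappa)$ maps $H_i(K_\bullet)_{\mathrm{div}}$ into itself, is exactly the commutative square required in Definition~\ref{def-cond-M2} for this $\tilde{f}$; hence $H_i(\kappa)$ satisfies (M2).

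The only point beyond routine diagram chasing is the equality $\mathrm{Im}(\tau_{i+1})=H_i(K_\bullet)_{\mathrm{div}}$, where both inclusions use finite generation of $H_i(L_\bullet)$; I would also flag the bookkeeping remark that Definition~\ref{def-cond-M2} asks only for compatibility of $\tilde{f}$ with the surjection onto $(G)_{\mathrm{div}}$ and not for $\tilde{f}$ to respect the splitting $V(G)\oplus W$, so it is harmless that $H_{i+1}(c)$ need not preserve the chosen complement $V'$. Apart from this the argument is the verbatim specialization of the proof of Proposition~\ref{prop-comparison2}.
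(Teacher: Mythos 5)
Your argument is correct and is precisely the approach the paper intends: the paper's own proof of Proposition~\ref{prop-comparison3} consists of the single sentence that it is similar to the proof of Proposition~\ref{prop-comparison2} and is omitted, and your proposal is exactly the faithful specialization of that cone argument to the endomorphism case, including the two supporting details the paper leaves implicit ($\mathrm{Im}(\tau_{i+1})=H_i(K_\bullet)_{\mathrm{div}}$ via reducedness of $H_i(K_\bullet)/\mathrm{Im}(\tau_{i+1})$, and taking $W$ to be the $\mathbb{Q}$-span of $\Gamma=\mathrm{Im}(\theta_{i+1})$ so that $\Gamma$ becomes full rank as Definition~\ref{def-cond-M2} requires).
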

Proof: The proof is similar to the proof of \ref{prop-comparison2}, and we omit it. \pqed

\begin{remark}
In proposition \ref{prop-comparison1}, if one reverses the direction of the homomorphism $\phi$, the conclusion on the mildness of $H_{i}(K_\bullet)$ is not true. For example, the diagonal homomorphism $\Delta: \mathbb{Z}\rightarrow \widehat{\mathbb{Z}}= \prod_{p}\mathbb{Z}_p$ becomes an isomorphism after tensoring $\mathbb{Z}/m$ for any integer $m$. Let 
\begin{eqnarray*}
0\rightarrow N\rightarrow M\rightarrow \prod_{p}\mathbb{Z}_p \rightarrow 0
\end{eqnarray*} 
be a short exact sequence of abelian groups with $M$ and $N$ free, and $f: \mathbb{Z}\rightarrow M$ be a lifting of $\Delta$. Then the morphism of complexes of free abelian groups
\begin{eqnarray*}
\xymatrix{
	\cdots \ar[r] & 0 \ar[r] \ar[d] & \cdots \ar[r] & 0\ar[d] \ar[r] & \mathbb{Z} \ar[r] \ar_{f}[d] & 0 \ar[r] \ar[d] & \cdots\\
	\cdots \ar[r] & 0 \ar[r] & \cdots \ar[r] & N \ar[r] & M \ar[r] & 0 \ar[r] & \cdots
}
\end{eqnarray*}
becomes a quasi-isomorphism after tensoring $\mathbb{Z}/m$ for any integer $m$. But $\prod_{p}\mathbb{Z}_p$ is not mild.\\
\end{remark}

To apply theorem \ref{prop-comparison1} we need the following corollary \ref{cor-dual2}.
\begin{lemma}\label{lem-dual1}
Let $\varphi:M\rightarrow N$ be a homomorphism between two $\mathbb{Z}/n$-modules. If $\varphi^{\vee}:N^\vee\rightarrow M^\vee$ is an isomorphism, then so is $\varphi$.
\end{lemma}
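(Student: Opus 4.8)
The plan is to deduce the statement from the fact that the contravariant functor $(-)^{\vee}=\Hom_{\mathbb{Z}/n}(-,\mathbb{Z}/n)$ on the category of $\mathbb{Z}/n$-modules is exact and kills no nonzero module, so that it detects isomorphisms on a four-term exact sequence.

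First I would record that $\mathbb{Z}/n$ is injective as a module over itself. By Baer's criterion it suffices to extend a homomorphism $I\to\mathbb{Z}/n$ over any ideal $I\subseteq\mathbb{Z}/n$; but every such ideal is principal, $I=(d)$ with $d\mid n$, and a homomorphism $(d)\to\mathbb{Z}/n$ must send $d$ to an element annihilated by $n/d$, hence to an element of $(d)$, say $d\mapsto de$, which extends by $1\mapsto e$. Consequently $(-)^{\vee}$ carries short exact sequences of $\mathbb{Z}/n$-modules to short exact sequences, and therefore, by splicing, carries every exact sequence to an exact sequence.

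Next I would check that $\mathbb{Z}/n$ is a cogenerator: for a nonzero $\mathbb{Z}/n$-module $P$ and $0\neq p\in P$, the cyclic submodule $\langle p\rangle$ is isomorphic to $\mathbb{Z}/d$ for some divisor $d>1$ of $n$, and $1\mapsto n/d$ defines a nonzero homomorphism $\mathbb{Z}/d\to\mathbb{Z}/n$; by the injectivity just established it extends to a homomorphism $P\to\mathbb{Z}/n$ not vanishing on $p$. In particular, if $P^{\vee}=0$ then $P=0$.

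Finally, given $\varphi:M\to N$ with $\varphi^{\vee}$ an isomorphism, I would apply the exact functor $(-)^{\vee}$ to the four-term exact sequence
\[
0\to\ker\varphi\to M\xrightarrow{\ \varphi\ }N\to\coker\varphi\to 0,
\]
obtaining the exact sequence
\[
0\to(\coker\varphi)^{\vee}\to N^{\vee}\xrightarrow{\ \varphi^{\vee}\ }M^{\vee}\to(\ker\varphi)^{\vee}\to 0 .
\]
Since $\varphi^{\vee}$ is an isomorphism, $(\coker\varphi)^{\vee}=\ker(\varphi^{\vee})=0$ and $(\ker\varphi)^{\vee}=\coker(\varphi^{\vee})=0$, hence $\ker\varphi=\coker\varphi=0$ by the cogenerator property, i.e.\ $\varphi$ is an isomorphism. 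There is no serious obstacle; the only point deserving care is that $M$ and $N$ are not assumed finitely generated, so one should argue through the injective-cogenerator property of $\mathbb{Z}/n$ rather than through any double-duality or counting argument, which could fail in the infinitely generated case.
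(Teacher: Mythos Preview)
Your proof is correct and follows the same overall strategy as the paper's: both rely on the injectivity of $\mathbb{Z}/n$ over itself and on the fact that $P^{\vee}=0$ forces $P=0$ (equivalently, that $P\to P^{\vee\vee}$ is injective). The execution differs in two minor but pleasant ways. First, the paper establishes the injectivity of $M\to M^{\vee\vee}$ by invoking the Baer--Pr\"ufer theorem to decompose $M$ as a direct sum of cyclic groups, whereas you obtain the cogenerator property directly by extending a nonzero map on a single cyclic submodule using injectivity; this avoids the structure theorem entirely and is more self-contained. Second, the paper treats injectivity and surjectivity of $\varphi$ separately (injectivity via the square with $\varphi^{\vee\vee}$, surjectivity via $L^{\vee}=0$ for $L=\coker\varphi$), while you handle both at once by dualizing the four-term kernel--cokernel sequence. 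Your closing caveat about double-duality is slightly off target: the paper does use the map into the double dual, but only its injectivity, which is exactly your cogenerator statement in disguise and is valid without any finiteness hypothesis.
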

\emph{Proof}: First we show $\phi$ is injective. Since $\varphi^{\vee\vee}:M^{\vee\vee}\rightarrow N^{\vee\vee}$ is an isomorphism, we need only show that the natural homomorphism $M\rightarrow M^{\vee\vee}$ is injective, then the injectivity of $\varphi$ follows from the commutative diagram
\begin{eqnarray*}
\xymatrix{
M \ar[d] \ar[r] & M^{\vee\vee} \ar[d]\\
N  \ar[r] & N^{\vee\vee}.
}
\end{eqnarray*}
As an abelian group, $M$ is bounded (i.e., all elements have bounded finite orders), so by Baer-Pr\"{u}fer theorem \cite[4.3.5]{Rob96} it is a direct sum of cyclic groups of the form $\mathbb{Z}/n_1$ where $n_1$ is a factor of $n$. Thus for every nonzero element $x$ of $M$, there is a nonzero homomorphism $\alpha: M\rightarrow \mathbb{Z}/n$ such that $\alpha(x)\neq 0$. This means $M\rightarrow M^{\vee\vee}$ is injective.\\
\indent Next we show $\varphi$ is surjective. Let $L$ be the cokernel of $\varphi$. Since $\mathbb{Z}/n$ is an injective $\mathbb{Z}/n$-module, we have an exact sequence
\begin{eqnarray*}
0\rightarrow L^\vee\rightarrow N^\vee\rightarrow M^\vee.
\end{eqnarray*}
So $L^\vee=0$, thus $L^{\vee\vee}=0$. But we have shown that $L\rightarrow L^{\vee\vee}$ is an injective homomorphism, so $L=0$.\pqed

\begin{corollary}\label{cor-dual2}
Let $K_\bullet$ and $L_\bullet$ be two chain complexes of free abelian groups, and $\Phi: K_\bullet\rightarrow L_\bullet$ a homomorphism of chain complexes. Let $n$ be an integer. Suppose that the induced homomorphism $\Hom^\bullet(L_\bullet, \mathbb{Z}/n)\rightarrow \Hom^\bullet(K_\bullet, \mathbb{Z}/n)$ is a quasi-isomorphism. Then $\Phi\otimes \mathbb{Z}/n: K_\bullet\otimes \mathbb{Z}/n\rightarrow L_\bullet\otimes \mathbb{Z}/n$ is also  a quasi-isomorphism.
\end{corollary}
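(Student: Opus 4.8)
The plan is to reduce the statement to Lemma \ref{lem-dual1}, applied degreewise on homology, by identifying $\Hom^\bullet(K_\bullet,\Z/n)$ with the $\Z/n$-linear dual of the complex $K_\bullet\otimes\Z/n$ and likewise for $L_\bullet$.

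First I would record that for any abelian group $M$ the extension--restriction adjunction supplies a natural isomorphism
\[
\Hom(M,\Z/n)\;\cong\;\Hom_{\Z/n}(M\otimes\Z/n,\,\Z/n),
\]
compatible with differentials. Applying this in each degree to $K_\bullet$ and to $L_\bullet$ identifies the morphism of cochain complexes $\Hom^\bullet(L_\bullet,\Z/n)\to\Hom^\bullet(K_\bullet,\Z/n)$ induced by $\Phi$ with the $\Z/n$-linear dual $(\Phi\otimes\Z/n)^\vee$ of $\Phi\otimes\Z/n\colon K_\bullet\otimes\Z/n\to L_\bullet\otimes\Z/n$, where $(-)^\vee=\Hom_{\Z/n}(-,\Z/n)$.

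Next, since $\Z/n$ is an injective module over itself (as is already used in the proof of Lemma \ref{lem-dual1}), the contravariant functor $(-)^\vee$ is exact, hence commutes with the formation of (co)homology. Thus for every $i$ there is a natural isomorphism
\[
H^i\big(\Hom^\bullet(K_\bullet,\Z/n)\big)\;\cong\;H_i(K_\bullet\otimes\Z/n)^\vee,
\]
and similarly for $L_\bullet$, under which the map on cohomology induced by $\Phi$ becomes $\big(H_i(\Phi\otimes\Z/n)\big)^\vee$. By hypothesis this map is an isomorphism for every $i$.

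Finally, each $H_i(K_\bullet\otimes\Z/n)$ and $H_i(L_\bullet\otimes\Z/n)$ is a $\Z/n$-module (not necessarily finitely generated, which is exactly why Lemma \ref{lem-dual1} is stated with no finiteness hypothesis), so Lemma \ref{lem-dual1} applies and yields that $H_i(\Phi\otimes\Z/n)$ is an isomorphism for every $i$; that is, $\Phi\otimes\Z/n$ is a quasi-isomorphism. I do not expect any genuine obstacle here: the real content is packaged into Lemma \ref{lem-dual1}, and the only points requiring care are the bookkeeping of the adjunction/dualization isomorphisms and the observation that the lemma must be invoked for arbitrary, possibly infinitely generated, $\Z/n$-modules.
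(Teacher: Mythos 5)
Your proof is correct and follows essentially the same route as the paper: identify $\Hom^\bullet(K_\bullet,\mathbb{Z}/n)$ with the $\mathbb{Z}/n$-linear dual of $K_\bullet\otimes\mathbb{Z}/n$ via the adjunction, pass the duality through (co)homology using self-injectivity of $\mathbb{Z}/n$, and then invoke Lemma~\ref{lem-dual1}. The only cosmetic difference is that you deduce the commutation with homology directly from exactness of $(-)^\vee$, whereas the paper phrases the same fact via the universal coefficient sequence together with the vanishing of $\Ext^1_{\mathbb{Z}/n}(-,\mathbb{Z}/n)$.
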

\emph{Proof}: For an abelian group $A$, there is a natural isomorphism $\Hom_\mathbb{Z}(A,\mathbb{Z}/n)\cong \Hom_{\mathbb{Z}/n}(A\otimes \mathbb{Z}/n,\mathbb{Z}/n)$. Thus $\Hom^\bullet(L_\bullet, \mathbb{Z}/n)\rightarrow \Hom^\bullet(K_\bullet, \mathbb{Z}/n)$ is the dual of $\Phi\otimes \mathbb{Z}/n$ as complexes of $\mathbb{Z}/n$-modules. Then by the  universal coefficient theorem (\ref{uct2}) and the fact that $\mathbb{Z}/n$ is an injective $\mathbb{Z}/n$-module, we have a commutative diagram of natural isomorphisms 
\begin{eqnarray*}
\xymatrix{
H^{i}(\Hom(L_\bullet, \mathbb{Z}/n)) \ar[d]^{\cong} \ar[r]^{\cong} & H_i (L_\bullet\otimes \mathbb{Z}/n)^\vee \ar[d]^{\cong}\\
H^{i}(\Hom(K_\bullet, \mathbb{Z}/n)) \ar[r]^{\cong}  & H_i (K_\bullet\otimes \mathbb{Z}/n)^\vee.}
\end{eqnarray*}
Then the conclusion follows from proposition \ref{lem-dual1}.\pqed

\section{Mildness of Suslin homology over an algebraically closed subfield of \texorpdfstring{$\mathbb{C}$}{}}

\subsection{Presheaves of relative zero cycles and Suslin homology}
For a field $\Bbbk$, we introduce several category of schemes over $\Bbbk$:
\begin{enumerate}
   \item[(i)] $\mathbf{Sch}/\Bbbk$ is the category of noetherian schemes separated over $\Bbbk$;
   \item[(ii)] $\mathbf{Nor}/\Bbbk$ is the full subcategory of $\mathbf{Sch}/\Bbbk$ consisting of normal noetherian schemes separated over $\Bbbk$;
   \item[(iii)] $\mathbf{Sch}_{\mathrm{ft}}/\Bbbk$ is the  full subcategory of $\mathbf{Sch}/\Bbbk$  consisting of schemes separated and of finite type over $\Bbbk$;
   \item[(iv)] $\mathbf{Nor}_{\mathrm{ft}}/\Bbbk$ is the full subcategory of $\mathbf{Sch}_{\mathrm{ft}}/\Bbbk$ consisting of normal noetherian schemes separated and of finite type over $\Bbbk$.
 \end{enumerate}
  We call $\Delta_{\Bbbk}^i:=\Spec(\Bbbk[t_0,\cdots,t_i]/(t_0+\cdots+t_i-1))$ the algebraic $i$-simplex over $\Bbbk$. When the base field $\Bbbk$ is obvious in the context, we omit the subscript and denote the $i$-th simplex by $\Delta^i$. For $X\in\mathrm{ob}(\mathbf{Sch}/\Bbbk)$, denote by $C_i(X)$ the  abelian group freely generated by the closed integral subschemes of $\Delta^i\times X$ which are surjective and finite over $\Delta^i$, and $C_i^{\mathrm{eff}}(X)$ the abelian sub-monoid of $C_i(X)$ consisting of the effective cycles. More generally, for any  $U\in \mathbf{Nor}/\Bbbk$, denote $z_0^c(X)(U)$ to be the abelian group freely generated by the integral subschemes of $U\times X$ which are finite and surjective on a component of $U$.

\begin{definition}\label{def-Suslinhomology}
The \emph{algebraic singular homology} of $X$, or the \emph{Suslin homology} of $X$, is defined to be the homotopy group of the simplicial abelian group $C_\bullet(X)$, i.e.,
\begin{eqnarray*}
H_i^{S}(X):=\pi_i(C_\bullet(X)).\
\end{eqnarray*}
For a simplicial abelian group $C_\bullet$, denote by $K(C_\bullet)$  the associated complex of abelian groups. Then for an abelian group $A$, the Suslin homology of $X$ with coefficients in $A$ is $H_i(K(C_\bullet(X))\otimes A)$. The \emph{Suslin cohomology} of $X$ is defined as
\[
H_S^i(X,A):=H^i\big(\Hom(K(C_\bullet(X)),A)\big).
\]
\end{definition}

Recall that a morphism of schemes is called a \emph{qfh covering} if it is a \emph{universal topological epimorphism} and is quasi-finite. In particular,  finite surjective morphisms and  étale surjective morphisms of finite type are qfh coverings. For more on this notion, see \cite[section 10]{SV96}, \cite[section 4]{SV00} and \cite{Voe96}. To get a better understanding of Suslin homology, and for later use, let us recall several presheaves of cycles on $\mathbf{Sch}/\Bbbk$ or $\mathbf{Sch}_{\mathrm{ft}}/\Bbbk$:
\begin{enumerate}
  \item[(i)] Restricting to $\mathbf{Nor}_{\mathrm{ft}}/\Bbbk$, $z_0^c(X)$ is a qfh-sheaf, and extends to a qfh-sheaf on $\mathbf{Sch}_{\mathrm{ft}}/\Bbbk$ (\cite[section 6]{SV96});
  \item[(ii)] $\mathbb{Z}(X)$ is the presheaf on $\mathbf{Sch}/\Bbbk$ or $\mathbf{Sch}_{\mathrm{ft}}/\Bbbk$, such that $\mathbb{Z}(X)(U)$ is the abelian group freely generated by the $\Bbbk$-morphisms from $U$ to $X$.  $\mathbb{Z}(X)_{\mathrm{qfh}}$ is the qfh-sheafification of  $\mathbb{Z}(X)$;
  \item[(iii)] the presheaf $c_{\mathrm{equi}}(X,0)$ on $\mathbf{Sch}/\Bbbk$ is introduced in \cite[section 3]{SV00}.
\end{enumerate}
These (pre-)sheaves have the following properties.
\begin{proposition}\phantomsection\label{prop-presheaves-cycles}
\begin{enumerate}
  \item[(i)] There is a natural isomorphism of qfh-sheaves $z_0^c(X)\cong \mathbb{Z}[\frac{1}{p}](X)_{\mathrm{qfh}}$ on $\mathbf{Sch}_{\mathrm{ft}}/\Bbbk$, where $p=\mathrm{char}(\Bbbk)$. 
  \item[(ii)] For regular  schemes $U\in \mathbf{Sch}/\Bbbk$, $c_{\mathrm{equi}}(X,0)(U)\cong z_0^c(X)(U)$.  
  \item[(iii)] The qfh-sheafification of $c_{\mathrm{equi}}(X,0)$ is canonically isomorphic to $\mathbb{Z}(X)_{\mathrm{qfh}}$.
\end{enumerate}
\end{proposition}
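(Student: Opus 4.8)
The plan is to treat the three assertions as restatements of the cycle-theoretic results of Suslin and Voevodsky in \cite{SV96} and \cite{SV00}, so that the actual work consists in exhibiting the comparison maps and matching the definitions. Since the base field here has characteristic zero we have $\mathbb{Z}[\tfrac1p]=\mathbb{Z}$, and the $p$-power denominators that appear in positive characteristic play no role; in particular this is what makes (i) and (iii) compatible.

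For (i) I would first construct the natural morphism of presheaves $\gamma\colon \mathbb{Z}(X)\to z_0^c(X)$ on $\mathbf{Nor}_{\mathrm{ft}}/\Bbbk$ that sends a $\Bbbk$-morphism $f\colon U\to X$ to the cycle $[\Gamma_f]$ of its graph in $U\times X$. Since $\mathrm{pr}_1$ restricts to an isomorphism $\Gamma_f\xrightarrow{\sim}U$, the graph is an integral closed subscheme finite and surjective over every component of $U$, so $\gamma$ is well defined; it extends $\mathbb{Z}[\tfrac1p]$-linearly and, using that $z_0^c(X)$ is a qfh-sheaf, to a morphism $\mathbb{Z}[\tfrac1p](X)_{\mathrm{qfh}}\to z_0^c(X)$. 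The content is that this last map is an isomorphism, and the only nontrivial point is surjectivity on qfh-stalks: given a normal $U$ and an integral $Z\subset U\times X$ finite and surjective over $U$, pulling $Z$ back along the qfh-covering $Z\to U$ exhibits the diagonal graph as a component of $Z\times_U Z$, and iterating along a tower of qfh-coverings that splits the (possibly inseparable) generic residue extension of $Z/U$ writes the pullback of $Z$ as $\sum_i p^{e_i}[\Gamma_{f_i}]$; after inverting $p$ this shows that $Z$ is qfh-locally in the image of $\gamma$. Injectivity amounts to the statement that the free groups on graphs map injectively modulo $p$-power torsion. This is essentially \cite[Thm.~6.7]{SV96}, so in the write-up I would recall the construction of $\gamma$ and quote the splitting lemma rather than reprove it.

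For (ii) I would appeal to the structure theory of relative cycles in \cite[\S3]{SV00}: over a regular base $U$ every relative cycle of relative dimension $0$ is universally integral, and the group of such cycles is freely generated by the closed integral subschemes of $U\times X$ that are equidimensional of relative dimension $0$ over a component of $U$ (it is at this step, via \cite[\S3.4]{SV00}, that regularity rather than mere normality is used). Imposing in addition properness over $U$, together with the fact that proper plus quasi-finite is finite, cuts this group down to the free abelian group on integral subschemes finite and surjective over a component of $U$, which is the definition of $z_0^c(X)(U)$; this yields the canonical identification $c_{\mathrm{equi}}(X,0)(U)\cong z_0^c(X)(U)$, functorial in regular $U$.

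For (iii) the key observation is that $\mathbf{Nor}_{\mathrm{ft}}/\Bbbk$ is a basis for the qfh topology on $\mathbf{Sch}_{\mathrm{ft}}/\Bbbk$: for every $U$ the composite $U^{\nu}\to U_{\mathrm{red}}\to U$ of the normalization and the reduction is finite and surjective, hence a qfh-covering, so restriction along $\mathbf{Nor}_{\mathrm{ft}}/\Bbbk\hookrightarrow\mathbf{Sch}_{\mathrm{ft}}/\Bbbk$ is an equivalence on categories of qfh-sheaves. The graph morphism of (i) gives $\mathbb{Z}(X)\to c_{\mathrm{equi}}(X,0)$ and hence a map on qfh-sheafifications; restricting to normal schemes and applying (ii) (valid over normal bases as well, by the same reference, and harmlessly so in characteristic zero) identifies the qfh-sheafification of $c_{\mathrm{equi}}(X,0)$ with $z_0^c(X)$, which by (i) is $\mathbb{Z}[\tfrac1p](X)_{\mathrm{qfh}}=\mathbb{Z}(X)_{\mathrm{qfh}}$. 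The main obstacle in all of this sits in part (i): the qfh-local decomposition of a finite surjective cycle over a normal base into a sum of graphs, with the precise bookkeeping of inseparability multiplicities through the tower of coverings — this is the genuinely hard input and the source of the $\mathbb{Z}[\tfrac1p]$; parts (ii) and (iii) are then formal once the statements of \cite{SV00} are quoted in the correct generality.
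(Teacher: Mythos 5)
Your sketches of (i) and (ii) track the paper's own proof, which is a bare citation: (i) is \cite[Thm.~6.7]{SV96} and (ii) is \cite[Cor.~3.4.6]{SV00}, exactly the results you point to, and the extra detail you supply (the graph construction $\gamma$, the role of regularity via universal integrality) is consistent with those references.

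The problem is in (iii). The paper treats it as an independent citation, namely \cite[Thm.~4.2.12]{SV00}, whereas you try to \emph{derive} it from (i) and (ii) together with the observation that normal schemes form a qfh basis. That derivation has a genuine gap: it needs (ii) over \emph{normal} bases, while Cor.~3.4.6 of \cite{SV00} (and your own statement of (ii)) is for \emph{regular} bases, and normal does not imply regular even in characteristic zero (think of a quotient singularity). The aside ``valid over normal bases as well, by the same reference, and harmlessly so in characteristic zero'' is asserting something the reference does not state and that is not obviously true. There is a further sanity check that exposes the issue: the proposition is stated for a general $\Bbbk$, with $p=\operatorname{char}(\Bbbk)$ appearing explicitly in (i), and part (iii) claims $\mathbb{Z}$ coefficients, not $\mathbb{Z}[1/p]$. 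If one could run your argument uniformly, combining (i) and (ii) over all normal bases would force the qfh-sheafification of $c_{\mathrm{equi}}(X,0)$ to equal $z_0^c(X)=\mathbb{Z}[1/p](X)_{\mathrm{qfh}}$, contradicting (iii) whenever $p>0$. So the extension of (ii) to normal bases must genuinely fail in positive characteristic, which means the step you are taking is not the ``formal'' one you claim; it is precisely the content that \cite[Thm.~4.2.12]{SV00} supplies directly and that you should cite rather than reconstruct. Relatedly, your opening reduction to characteristic zero is not licensed by the proposition as stated; parts (i) and (iii) are formulated so as to hold over an arbitrary $\Bbbk$.
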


In fact, (i) is \cite[theorem 6.7]{SV96}, (ii) is \cite[corollary 3.4.6]{SV00}, and (iii) is \cite[theorem 4.2.12]{SV00}.

Let $X\in \mathbf{Sch}/\Bbbk$ satisfying that any finite subset of $X$ is contained in an affine open subset, for example quasiprojective schemes over $\Bbbk$. Thus  the $j$-th symmetric product $\mathrm{S}^j(X)$ exists as a scheme. By \cite[theorem 6.8]{SV96},  for any normal connected scheme $S$  in $\mathbf{Sch}/\Bbbk$, there is a natural isomorphism
\begin{eqnarray}\label{eq-sv96.6.8-0}
\Hom(S, \coprod_{j=0}^{\infty}\mathrm{S}^j(X))[\frac{1}{p}]\cong z_0^c(X)^{\mathrm{eff}}(S),
\end{eqnarray}
where $p=\mathrm{char}(\Bbbk)$. In the following of this subsection we generalize this result to all $X\in \mathbf{Sch}/\Bbbk$. 

\begin{definition}
Let $S$ be a scheme. 
Denote by $\mathbf{Es}/S$ the category of algebraic spaces separated and of finite type over $S$. A representable morphism $f:X\rightarrow Y$ in $\mathbf{Es}/S$ is called a \emph{qfh covering} if for an étale covering $U\rightarrow Y$ by a scheme $U$ of finite type over $S$, $f_U: X\times_Y U\rightarrow U$ is qfh. More generally, a morphism  $f:X\rightarrow Y$ in $\mathbf{Es}/S$ is called a \emph{qfh covering} if for an étale covering $g:V\rightarrow X$ by a scheme $V$ of finite type over $S$, $f\circ g: V\rightarrow Y$ is qfh. The independence of the choices of $U$ and $V$ is obvious.
\end{definition}

\begin{lemma}\label{lem-qfh-es}
Let $S$ be a scheme and $\mathscr{F}$ a qfh-sheaf over $\mathbf{Sch}/S$. For every $S$-algebraic space $X$, choosing an étale cover $U\rightarrow X$ where $U$ is an $S$-scheme, put
\[
\mathscr{F}(X)=\ker(\mathscr{F}(U)\rightrightarrows \mathscr{F}(U\times_X U)).
\]
Then $\mathscr{F}$ becomes a qfh-sheaf over $\mathbf{Es}/S$.
\end{lemma}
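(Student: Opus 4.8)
\emph{Proof proposal.} The plan is to identify the displayed formula as the value of the unique qfh-sheaf on $\mathbf{Es}/S$ restricting to $\mathscr{F}$ on $\mathbf{Sch}/S$, and to construct that sheaf via the comparison lemma for the fully faithful inclusion $\iota\colon \mathbf{Sch}/S\hookrightarrow \mathbf{Es}/S$. The one geometric input needed is that for $X\in\mathbf{Es}/S$ and an étale covering $U\to X$ with $U$ a scheme, the fibre products $U\times_X U$ and $U\times_X U\times_X U$ are again schemes: the projection $U\times_X U\to U$ is étale, hence locally quasi-finite, and separated (since $X$ and $U$ are separated over $S$ the map $U\times_X U\to U\times_S U$ is a closed immersion and $U\times_S U\to U$ is separated), and an algebraic space which is separated and locally quasi-finite over a scheme is a scheme; similarly for the triple product. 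Thus $\check{C}(U/X)$ is a simplicial object of $\mathbf{Sch}/S$, and the kernel in the statement is literally the equaliser $\ker\bigl(\mathscr{F}(U)\rightrightarrows\mathscr{F}(U\times_X U)\bigr)$ of values of $\mathscr{F}$ on $\mathbf{Sch}/S$.

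Next I would verify the hypotheses of the comparison lemma \cite[Exp.~III]{SGA4} for $\iota$: full faithfulness is clear; every $X\in\mathbf{Es}/S$ admits a qfh covering by a scheme (an étale scheme-chart, an étale surjective morphism of finite type being qfh); and every qfh covering of a scheme in $\mathbf{Es}/S$ can be refined by one whose members are schemes (refine each member by an étale scheme-chart and compose). One checks that the topology thereby induced on $\mathbf{Sch}/S$ is its own qfh topology. The comparison lemma then yields an equivalence of sheaf categories, and the sheaf $\widetilde{\mathscr{F}}$ on $\mathbf{Es}/S$ corresponding to $\mathscr{F}$ satisfies $\widetilde{\mathscr{F}}|_{\mathbf{Sch}/S}=\mathscr{F}$, is functorial by construction, and — by the sheaf condition applied to any étale scheme-cover $U\to X$, using that $U$ and $U\times_X U$ are schemes — has $\widetilde{\mathscr{F}}(X)=\ker\bigl(\mathscr{F}(U)\rightrightarrows\mathscr{F}(U\times_X U)\bigr)$. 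This is exactly the prescription of the lemma; in particular that prescription is independent of the chosen cover, functorial in $X$, and defines a qfh-sheaf.

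If one prefers to avoid the comparison lemma, the same conclusion can be reached directly. Independence of the cover reduces, via the common refinement $U\times_X U'$ (a scheme, étale surjective over each of $U,U'$), to the case of a refinement $V\to U$ of scheme-covers of $X$; there $V\to U$ and $V\times_X V\to U\times_X U$ are qfh coverings of schemes, and a diagram chase with qfh-descent for $\mathscr{F}$ on $\mathbf{Sch}/S$ identifies the two equalisers. Functoriality is then formal. For the qfh-sheaf axiom for a covering $p\colon X'\to X$ in $\mathbf{Es}/S$ one chooses an étale scheme-cover $U\to X$ and an étale scheme-cover $W\to X'\times_X U$, so that $W\to U$ is a qfh covering of schemes while $W\to X'$ is an étale scheme-cover, and extracts the exactness of $\mathscr{F}(X)\to\mathscr{F}(X')\rightrightarrows\mathscr{F}(X'\times_X X')$ from the total complex of the resulting first-quadrant bicomplex whose rows are the qfh-descent exact sequences for the scheme coverings $\check{C}(W/X)_n\to\check{C}(U/X)_n$ and whose columns are the (definitional) étale-descent sequences.

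The main obstacle, in either route, is not a single deep step but the recurring bookkeeping that makes the elementary inputs legitimate: one must repeatedly check that every fibre product of scheme-coverings that appears is again a scheme (the input ``separated and locally quasi-finite over a scheme $\Rightarrow$ scheme'') and that the induced maps among these are again qfh coverings of schemes, so that the comparison lemma — respectively the scheme-level qfh-descent feeding the bicomplex — genuinely applies. Once this is organised, the rest is formal.
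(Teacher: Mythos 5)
Your proposal is correct and fills in precisely the ``standard proof'' that the paper omits; its key ingredient is exactly the one the paper singles out, namely that \'etale coverings (in both $\mathbf{Sch}/S$ and $\mathbf{Es}/S$) are qfh coverings. The route via the comparison lemma of SGA~4, Exp.~III for the fully faithful inclusion $\mathbf{Sch}/S\hookrightarrow\mathbf{Es}/S$ is a clean way to organize the bookkeeping, and the preliminary observation that $U\times_X U$ (and the higher fibre powers) are schemes — separated and locally quasi-finite over the scheme $U$ — is the right thing to check so that the displayed equaliser is literally a formula in values of $\mathscr{F}$ on $\mathbf{Sch}/S$.

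One small point worth spelling out when you verify the hypotheses of the comparison lemma: you assert that the induced topology on $\mathbf{Sch}/S$ is the qfh topology, which requires showing both that every qfh covering of a scheme in $\mathbf{Sch}/S$ remains covering in $\mathbf{Es}/S$ (clear) and, conversely, that any qfh covering of a scheme by algebraic spaces in $\mathbf{Es}/S$ refines to a scheme-covering that is already qfh in $\mathbf{Sch}/S$ — your refinement-by-\'etale-charts does this, but you should note that the composite of qfh coverings is again qfh (compositions of universal topological epimorphisms are universal topological epimorphisms, and quasi-finiteness composes) so that the refined family is indeed a qfh covering in $\mathbf{Sch}/S$. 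With that closed, the argument is complete.
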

Proof: We omit the standard proof; the only fact we need is that  étale coverings in $\mathbf{Sch}/S$ and $\mathbf{Es}/S$ are  qfh coverings.\pqed

\begin{lemma}\label{lem-sp-es}
Let $S$ be a locally noetherian scheme. For any scheme $X$ separated and locally of finite type over $S$, the quotient stack 
\[
\mathscr{S}^d(X/S)=\underbrace{X\times_S X\times_S\cdots \times_S X}_{d\mbox{\footnotesize{-factors of}}\ X}/\mathfrak{S}_d
 \] 
has a coarse moduli, which is  a algebraic space separated and locally of finite type over $S$, and we denote it by $\mathrm{S}^d(X/S)$. It has the following properties:
\begin{enumerate}
  \item[(i)] If $X$ is  finite and surjective over $S$, $\mathrm{S}^d(X/S)$ is  a scheme and coincides with the construction of \cite[p. 81]{SV96}. 
  \item[(ii)] $\mathrm{S}^d(X/S)$ is covariant in $X$, i.e., for an $S$-morphism of two separated $S$-schemes $X\rightarrow Y$, there is an naturally associated morphism of algebraic spaces $\mathrm{S}^d(X/S)\rightarrow \mathrm{S}^d(Y/S)$.
  \item[(iii)] The formation of $\mathrm{S}^d(X/S)$ commutes with flat base changes, i.e., $\mathrm{S}^d(X\times_S T/T)$ is naturally isomorphic to $\mathrm{S}^d(X/S)\times_S T$ if $T\rightarrow S$ is a flat morphism.
\end{enumerate}
\end{lemma}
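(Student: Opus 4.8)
The plan is to realize $\mathrm{S}^d(X/S)$ as the coarse moduli space of the quotient stack and to read off everything from the Keel--Mori theorem. Set $Z=\underbrace{X\times_S\cdots\times_S X}_{d\text{ factors}}$. Since $S$ is locally noetherian and $X\to S$ is separated and locally of finite type, so is $Z\to S$, and $\mathfrak{S}_d$ acts on the scheme $Z$ over $S$ by permuting factors. First I would record the basic properties of $\mathscr{S}^d(X/S)=[Z/\mathfrak{S}_d]$: it is an algebraic stack and $Z\to[Z/\mathfrak{S}_d]$ is a finite étale $\mathfrak{S}_d$-torsor, so it is Deligne--Mumford and locally of finite type over $S$; it is separated over $S$, because the base change of its diagonal along $Z\times_S Z\to[Z/\mathfrak{S}_d]\times_S[Z/\mathfrak{S}_d]$ is the closed subscheme $\coprod_{g\in\mathfrak{S}_d}\Gamma_g\subset Z\times_S Z$ (the graphs of the translations, closed because $Z\to S$ is separated); and its inertia stack, a closed subscheme of $Z\times\mathfrak{S}_d$ finite over $Z$, is finite.

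Next I would invoke the Keel--Mori theorem --- in the form valid for algebraic stacks with finite inertia that are locally of finite type over a locally noetherian base, to which one reduces by working Zariski-locally on $S$: there is a coarse moduli space $c\colon[Z/\mathfrak{S}_d]\to M$ with $M$ an algebraic space and $c$ proper and quasi-finite; set $\mathrm{S}^d(X/S):=M$. By the standard complements to that theorem, the coarse space of a separated stack with finite inertia is separated and inherits the property ``locally of finite type over $S$'', so $\mathrm{S}^d(X/S)\to S$ is separated and locally of finite type; and formation of the coarse space commutes with flat base change. I would also keep the local description built into the construction: étale-locally on $M$ one has $[Z/\mathfrak{S}_d]=[\Spec B/\mathfrak{S}_d]$ with $M=\Spec(B^{\mathfrak{S}_d})$.

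The three numbered properties then follow formally. For (i): if $X\to S$ is finite surjective then $Z\to S$ is finite, hence affine, say $Z=\underline{\Spec}_S\mathcal{B}$ with $\mathcal{B}$ a finite $\Oo_S$-algebra; the local description gives $M=\underline{\Spec}_S(\mathcal{B}^{\mathfrak{S}_d})$, since over an affine $\Spec A\subseteq S$ this is the classical statement that $\Spec(B^{\mathfrak{S}_d})$ is the categorical quotient of the affine scheme $\Spec B$ by the finite group $\mathfrak{S}_d$, and these glue over $S$. So $\mathrm{S}^d(X/S)$ is a scheme finite over $S$, and as the construction of \cite[p.~81]{SV96} for $X/S$ finite surjective is again the relative $\mathfrak{S}_d$-invariants of $\mathcal{B}$, the two coincide by uniqueness of categorical quotients. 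For (ii): an $S$-morphism $X\to Y$ of separated $S$-schemes induces an $\mathfrak{S}_d$-equivariant map $Z=X^{\times_S d}\to Y^{\times_S d}$, hence a morphism of quotient stacks, hence by the universal property of coarse spaces a morphism $\mathrm{S}^d(X/S)\to\mathrm{S}^d(Y/S)$, functorial in $X$ by the same universal property. For (iii): for a flat $T\to S$ there is a canonical $\mathfrak{S}_d$-equivariant isomorphism $(X\times_S T)^{\times_T d}\cong Z\times_S T$, whence $[\,(X\times_S T)^{\times_T d}/\mathfrak{S}_d\,]\cong[Z/\mathfrak{S}_d]\times_S T$, and the flat-base-change compatibility of the coarse space gives $\mathrm{S}^d(X\times_S T/T)\cong M\times_S T$.

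The only substantive input is the Keel--Mori theorem together with its two standard complements (separatedness of the coarse space of a separated stack with finite inertia, and compatibility of the coarse space with flat base change); everything else is bookkeeping with universal properties. The point that forces the algebraic-space formulation, and the one place where care is needed, is that $X\to S$ is allowed to be merely locally of finite type, so $Z$ need not be quasi-compact or covered by $\mathfrak{S}_d$-stable affines whose GIT quotients glue to a scheme; one must instead glue the étale-local affine quotients into an algebraic space and verify that separatedness and the locally-of-finite-type property descend along the non-representable proper surjection $c$.
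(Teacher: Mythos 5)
Your proposal is correct and follows essentially the same route as the paper's proof: both reduce the statement to the Keel--Mori theorem for the quotient stack $[X^{\times_S d}/\mathfrak{S}_d]$, deriving the existence of the coarse moduli space together with properties (ii) and (iii) from the standard complements to Keel--Mori (the paper cites \cite[Theorem 11.1.2]{Ols16}), and identifying (i) with the categorical (affine invariant-ring) quotient. You simply spell out the verifications that the paper compresses into one line, namely that $[X^{\times_S d}/\mathfrak{S}_d]$ is locally of finite presentation with finite diagonal over $S$, and the deduction of (i)--(iii) from the universal property and flat-base-change compatibility of the coarse space.
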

Proof: By the assumptions on $X$ and $S$, the algebraic stack $\mathscr{S}^d(X/S)$ is locally of finite presentation with finite diagonal over $S$. So the existence of the coarse moduli and the properties (ii)-(iii) follows from the Keel-Mori theorem, see e.g. \cite[theorem 11.1.2]{Ols16}. In the case of (i), the coarse moduli coincides with the categorical quotient $\mathrm{S}^d(X/S)$.\pqed

For $X\in \mathbf{Sch}/\Bbbk$, we denote the $i$-th symmetric product of $X$ by $\mathrm{S}^i(X)$, which is an algebraic space by lemma \ref{lem-sp-es}.

\begin{proposition}\label{prop-iso-symprod}
Suppose $\mathrm{char}(\Bbbk)=p$. Let $X$ be separated scheme of finite type over $\Bbbk$.  Then for any  $S\in \mathbf{Nor}/\Bbbk$, there is  a natural isomorphism
\begin{eqnarray}\label{eq-sv96.6.8-1}
\Hom(S, \coprod_{j=0}^{\infty}\mathrm{S}^j(X))[\frac{1}{p}]\cong z_0^c(X)^{\mathrm{eff}}(S).
\end{eqnarray}
In particular, there is  a natural isomorphism
\begin{eqnarray}\label{eq-sv96.6.8}
\Hom(\Delta^i, \coprod_{j=0}^{\infty}\mathrm{S}^j(X))[\frac{1}{p}]\cong C_i^{\mathrm{eff}}(X).
\end{eqnarray}
\end{proposition}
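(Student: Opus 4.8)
The plan is to reduce $(\ref{eq-sv96.6.8-1})$ to the quasiprojective case $(\ref{eq-sv96.6.8-0})$ of Suslin--Voevodsky (\cite[Theorem 6.8]{SV96}), using Lemma \ref{lem-sp-es}(i) to pass between the algebraic space $\mathrm{S}^j(X)$ and the symmetric power scheme, and then to deduce $(\ref{eq-sv96.6.8})$ by specializing $S=\Delta^i_\Bbbk$, which is smooth over $\Bbbk$ and so lies in $\mathbf{Nor}/\Bbbk$; indeed by definition $C_i^{\mathrm{eff}}(X)=z_0^c(X)^{\mathrm{eff}}(\Delta^i)$.

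First I would build a natural transformation $\Phi_X\colon\Hom(-,\coprod_{j\ge 0}\mathrm{S}^j(X))[\tfrac1p]\to z_0^c(X)^{\mathrm{eff}}(-)$ of presheaves on $\mathbf{Nor}/\Bbbk$. The quotient morphism $X^j\to\mathrm{S}^j(X)$ is finite and surjective, hence a qfh covering; given a connected $S\in\mathbf{Nor}/\Bbbk$ and $f\colon S\to\mathrm{S}^j(X)$, the fibre product $S'=S\times_{\mathrm{S}^j(X)}X^j$ is a scheme (affine over $S$), finite and surjective over $S$, and over $S'$ the composite $S'\to X^j$ provides sections $g_1,\dots,g_j\colon S'\to X$, so $f$ pulls back to the effective relative $0$-cycle $\sum_l\Gamma_{g_l}\in z_0^c(X)^{\mathrm{eff}}(S')$. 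Since $z_0^c(X)\cong\mathbb{Z}[\tfrac1p](X)_{\mathrm{qfh}}$ is a qfh-sheaf (Proposition \ref{prop-presheaves-cycles}(i)) and finite surjective maps are qfh coverings, the two pullbacks of this cycle to $S'\times_S S'$ agree (both come from $f$), so it descends --- after inverting $p$, which is forced because effectivity is only qfh-local after inverting $p$ --- to an element of $z_0^c(X)^{\mathrm{eff}}(S)[\tfrac1p]$; one then checks naturality. Equivalently, via Lemma \ref{lem-sp-es}(iii) one may work with the section of $\mathrm{S}^j(X\times_\Bbbk S/S)\cong\mathrm{S}^j(X)\times_\Bbbk S$ determined by $f$ and a tautological cycle over it. A candidate inverse $\Psi_X$ runs the other way: an effective relative $0$-cycle on $X$ over $S$ is a sum $\sum_i n_iW_i$ with $W_i\subset S\times X$ integral, finite and surjective over $S$ of degree $d_i$; such a $W_i$ determines over the normal base $S$, after inverting $p$, a point $S\to\mathrm{S}^{d_i}(W_i/S)$ (Lemma \ref{lem-sp-es}(i)), and composing with $\mathrm{S}^{d_i}(W_i/S)\to\mathrm{S}^{d_i}(X\times_\Bbbk S/S)=\mathrm{S}^{d_i}(X)\times_\Bbbk S\to\mathrm{S}^{d_i}(X)$ (Lemma \ref{lem-sp-es}(ii)--(iii)) and summing $n_i$ copies via the monoid structure on $\coprod_j\mathrm{S}^j(X)$ yields the required point.

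For $X$ quasiprojective over $\Bbbk$, Lemma \ref{lem-sp-es}(i) identifies $\mathrm{S}^j(X)$ with the symmetric power of \cite[p.~81]{SV96}, and $(\ref{eq-sv96.6.8-0})$ says precisely that $\Phi_X$ is an isomorphism, once one checks $\Phi_X,\Psi_X$ coincide with Suslin--Voevodsky's maps (a matter of unwinding their Chow-variety description of relative cycles). For general $X$ I would induct on $\dim X$: choose a dense open subscheme $U\subseteq X$ that is quasiprojective over $\Bbbk$ (for instance quasi-affine), with closed complement $Z=X\setminus U$ of strictly smaller dimension, so that $\Phi_U$ is an isomorphism by the base case and $\Phi_Z$ by the inductive hypothesis. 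It then remains to transport the isomorphism from $U$ and $Z$ to $X$. For this I would use that $z_0^c(-)=\mathbb{Z}[\tfrac1p](-)_{\mathrm{qfh}}$ satisfies cdh descent, so that, qfh-locally on $S$ and up to the multiplicities forced by the abstract blow-up square attached to $U\subseteq X$ and a Chow envelope $X'\to X$ (proper surjective with $X'$ quasiprojective, an isomorphism over $U$; here $X'$ is handled by the base case, while $Z$ and $E=X'\times_X Z$, of dimension $<\dim X$, are handled by the induction), an effective relative $0$-cycle on $X/S$ is assembled from effective relative $0$-cycles on $U/S$ and $Z/S$. The parallel description on the symmetric-power side follows from Lemma \ref{lem-qfh-es} (which extends the qfh-sheaf $z_0^c(X)$ to algebraic spaces) together with the functoriality of Lemma \ref{lem-sp-es}, and the two sides match because they already do over $U$ and $Z$.

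The main obstacle is this last gluing step. The presheaves of effective cycles are monoid-valued and do not sit in short exact sequences, so one cannot simply run the five lemma on a blow-up square; instead one must follow explicitly how an effective relative $0$-cycle over a normal base decomposes with respect to the stratification $X=U\sqcup Z$ --- the support of a single cycle may straddle $U$ and $Z$ --- and verify that the resulting set-theoretic decomposition lifts compatibly on both sides after a suitable qfh localization of $S$. A secondary but unavoidable subtlety, already visible in the definitions of $\Phi_X$ and $\Psi_X$, is the behaviour of the tautological cycle and of the point $S\to\mathrm{S}^{d}(W/S)$ when $W\to S$ is finite surjective but not flat; this is exactly the phenomenon that produces the factor $\mathbb{Z}[\tfrac1p]$, and it is controlled by Suslin--Voevodsky's theory of relative cycles.
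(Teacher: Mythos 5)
Your approach and the paper's diverge fundamentally, and yours has a gap that I do not think can be closed. The paper's proof is essentially two lines: the symmetric power $\mathrm{S}^j(X)$ of an arbitrary separated finite-type scheme exists as an \emph{algebraic space} (Lemma \ref{lem-sp-es}), a qfh-sheaf such as $z_0^c(X)$ extends canonically from schemes to algebraic spaces (Lemma \ref{lem-qfh-es}), and with these two devices the original proof of \cite[Theorem 6.8]{SV96} runs verbatim. The hypothesis in \emph{loc.\ cit.}\ that every finite subset of $X$ lie in an affine open was invoked only to guarantee that $\mathrm{S}^j(X)$ is a \emph{scheme}; replacing scheme by algebraic space throughout removes that restriction with no devissage and no induction. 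Note that your own construction of $\Phi_X$ (pull back along the qfh cover $X^j\to\mathrm{S}^j(X)$, sum the graphs of the induced sections, descend) \emph{is} the Suslin--Voevodsky map, and it already makes sense for any separated $X$ once those two lemmas are available; the rest of their argument that it is bijective is a direct fibrewise analysis that likewise never touches quasiprojectivity again.

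Your attempted reduction to the quasiprojective case by dimension induction over an abstract blow-up square is, by contrast, blocked by exactly the obstacle you flag, and that obstacle is fatal rather than technical. These presheaves are \emph{monoid}-valued, and the abstract-blow-up/cdh formalism is an abelian-sheaf device: it needs subtraction. Concretely, if $W\subset S\times X$ is integral, finite and surjective over a connected normal $S$, with generic point lying over $U$, then $W\cap(S\times U)$ is an open dense subscheme of $W$ that is quasi-finite but \emph{not} finite over $S$, so it is not an effective relative $0$-cycle on $U/S$; and $W$ admits no decomposition $W=W_U+W_Z$ with $W_U\in z_0^c(U)^{\mathrm{eff}}(S)$ and $W_Z\in z_0^c(Z)^{\mathrm{eff}}(S)$. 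Passing to a qfh cover $S'\to S$ cannot repair this: the straddling is a property of the map $W\to S$ along the fibres over $Z\subset X$, and $W\times_S S'\to S'$ straddles in precisely the same way. So the "suitable qfh localization of $S$" you invoke does not exist, and the induction step cannot be set up at the level of effective cycles. I would abandon the cdh route and simply run the SV96 argument with algebraic spaces, which is what Lemmas \ref{lem-qfh-es} and \ref{lem-sp-es} are for.
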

\emph{Proof:}  Using lemmas \ref{lem-qfh-es} and \ref{lem-sp-es} one can check that the proof of \cite[theorem 6.8]{SV96} (see \cite[p. 81 line 10 to p. 82]{SV96}) still works. \pqed 

%\subsection{Some presheaves of cycles}

\subsection{On the Dold-Thom  theorem}
The goal of this subsection is to supply a proof for \cite[theorem 8.2]{SV96}(=corollary \ref{cor-kan-fibration}), which was stated without proof as a variant of Dold-Thom theorem \cite{DT58}. We begin by recalling some definitions and notations of loc. cit.
\begin{definition}\label{def-SP}
Let $(\mathsf{X},\mathsf{x}_0)$ be a pointed topological space, i.e.  a topological space $\mathsf{X}$  with a distinguished point $\mathsf{x}_0\in \mathsf{X}$. Denote the symmetric product of $(\mathsf{X},\mathsf{x}_0)$ by $\mathrm{S}^i(\mathsf{X},\mathsf{x}_0)$, for $i\geq 0$, which is still pointed ($\mathrm{S}^0(\mathsf{X},\mathsf{x}_0)$ is the single point space $\{\mathsf{x}_0\}$). There are natural inclusions
\begin{equation}\label{eq-def-SP}
\mathrm{S}^i(\mathsf{X},\mathsf{x}_0)\hookrightarrow \mathrm{S}^{i+1}(\mathsf{X},\mathsf{x}_0),\quad 
(x_1,...,x_i)\mapsto (x_0,x_1,...,x_i).
\end{equation}
The colimit $\varinjlim_i \mathrm{S}^i(\mathsf{X},\mathsf{x}_0)$ is denoted by $\mathrm{SP}(\mathsf{X},\mathsf{x}_0)$, which is a topological abelian monoid in an obvious way.
\end{definition}

\begin{definition}
Let $(\mathsf{X},\mathsf{{x}_0)}$ be a pointed topological space. 
Denote by $\tau$ the map $\mathsf{X}\vee \mathsf{X}\rightarrow \mathsf{X}\vee \mathsf{X}$ which exchange the two copies of $\mathsf{X}$. We introduce a relation 
\[
\mathsf{x}\sim \mathsf{x}+\mathsf{x}'+\tau(\mathsf{x}').
\]
Then we denote the quotient space $\mathrm{SP}(\mathsf{X}\vee \mathsf{X},\mathsf{x}_0)/\sim$ by $\mathrm{AG}(\mathsf{X},\mathsf{x}_0)$, which is a commutative topological group, with the identity element  $\mathsf{x}_0$. Denote the two canonical embeddings $\mathrm{SP}(\mathsf{X},\mathsf{x}_0)\hookrightarrow \mathrm{SP}(\mathsf{X}\vee \mathsf{X},\mathsf{x}_0)/\sim=\mathrm{AG}(\mathsf{X},\mathsf{x}_0)$ by $\iota_1$ and $\iota_2$.
\end{definition}

Recall the following theorem of Dold and Thom \cite[Satz 6.10]{DT58}.
\begin{theorem}\label{thm-dt58}
Suppose $q>0$. 
\begin{enumerate}
  \item[(i)] Let $(\mathsf{X},\mathsf{x}_0)$ be a pointed connected  CW-complex. Then there are natural isomorphisms
  \begin{equation}\label{eq-dt58-sp}
  H_q(\mathsf{X},\mathbb{Z})\xrightarrow{\sim} \pi_q(\mathrm{SP}(\mathsf{X},\mathsf{x}_0)).
  \end{equation}
  \item[(ii)] Let $(\mathsf{X},\mathsf{x}_0)$ be a pointed countable connected  simplicial complex. Then there are  natural isomorphisms
  \begin{equation}\label{eq-dt58-ag}
  H_q(\mathsf{X},\mathbb{Z})\xrightarrow{\sim} \pi_q(\mathrm{AG}(\mathsf{X},\mathsf{x}_0)).
  \end{equation}
\end{enumerate}
\end{theorem}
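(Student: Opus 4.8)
Since Theorem~\ref{thm-dt58} is the classical theorem of Dold and Thom, the plan is to reproduce the argument of \cite{DT58}: one shows that, on the category of pointed connected CW-complexes, the functors $\mathsf{X}\mapsto\pi_*(\mathrm{SP}(\mathsf{X},\mathsf{x}_0))$ (for part (i)) and $\mathsf{X}\mapsto\pi_*(\mathrm{AG}(\mathsf{X},\mathsf{x}_0))$ (for part (ii)), together with the boundary maps supplied below, constitute a reduced homology theory, and then one invokes the Eilenberg--Steenrod uniqueness theorem to identify it with reduced singular homology $\widetilde H_*(\cdot,\mathbb{Z})$. Write $h_q(\mathsf{X})=\pi_q(\mathrm{SP}(\mathsf{X},\mathsf{x}_0))$. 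Homotopy invariance and functoriality are immediate, since $\mathrm{SP}$ is a continuous functor carrying a homotopy $\mathsf{X}\times I\to\mathsf{Y}$ to a homotopy $\mathrm{SP}(\mathsf{X})\times I\to\mathrm{SP}(\mathsf{Y})$. Additivity is also easy: a finite multiset on a wedge $\bigvee_\alpha\mathsf{X}_\alpha$ decomposes uniquely into its parts on the various summands, so $\mathrm{SP}(\bigvee_\alpha\mathsf{X}_\alpha)=\varinjlim_F\prod_{\alpha\in F}\mathrm{SP}(\mathsf{X}_\alpha)$ over finite subsets $F$, and $\pi_q$ commutes with this filtered colimit and with finite products. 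The substance of the proof is the exactness axiom.

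The key input is the \emph{quasifibration theorem}: for a CW-pair $(\mathsf{X},\mathsf{A})$ the quotient map $p\colon\mathrm{SP}(\mathsf{X})\to\mathrm{SP}(\mathsf{X}/\mathsf{A})$ is a quasifibration whose fibre over the base point is $\mathrm{SP}(\mathsf{A})$. Granting this, the long exact homotopy sequence of a quasifibration yields the exact sequences $h_q(\mathsf{A})\to h_q(\mathsf{X})\to h_q(\mathsf{X}/\mathsf{A})\to h_{q-1}(\mathsf{A})$; applied to the pair $(C\mathsf{X}',\mathsf{X}')$, where $C\mathsf{X}'/\mathsf{X}'=\Sigma\mathsf{X}'$ and $\mathrm{SP}(C\mathsf{X}')$ is contractible, it gives the suspension isomorphism $h_q(\mathsf{X}')\xrightarrow{\ \sim\ }h_{q+1}(\Sigma\mathsf{X}')$. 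To prove the quasifibration theorem I would use Dold--Thom's gluing machinery for quasifibrations: (a) if $B=U_1\cup U_2$ with $U_i$ open and $p$ is a quasifibration over $U_1$, $U_2$ and $U_1\cap U_2$, then it is one over $B$; (b) the property is inherited by an increasing union $B=\bigcup_n B_n$; and (c) a ``distinguished'' deformation of $p^{-1}(U)$ into $p^{-1}(V)$ covering a deformation retraction of $U$ onto $V$ and restricting to a weak equivalence on some fibre propagates the quasifibration property from $V$ to $U$. One then filters $\mathrm{SP}(\mathsf{X}/\mathsf{A})$ by the images $\mathrm{SP}_d$ of the $d$-fold symmetric products, uses an NDR-neighbourhood of $\mathsf{A}$ in $\mathsf{X}$ (available because $(\mathsf{X},\mathsf{A})$ is a CW-pair) to build open neighbourhoods of $\mathrm{SP}_{d-1}$ inside $\mathrm{SP}_d$ together with deformations as in (c), and runs an induction on $d$ via (a) and (b), the case $d=0$ being trivial. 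This quasifibration statement is the main obstacle of the whole proof.

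Finally one pins down the coefficients. Iterating the suspension isomorphism (equivalently the quasifibration $\mathrm{SP}(C\mathsf{X}')\to\mathrm{SP}(\Sigma\mathsf{X}')$) reduces $h_*(S^n)$ to $h_*$ of a low-dimensional sphere, and since $\mathrm{SP}(S^0,\mathsf{x}_0)\cong\mathbb{N}$ is discrete (and, in the group version, $\mathrm{AG}(S^0,\mathsf{x}_0)\cong\mathbb{Z}$ is discrete) one gets $\pi_q(\mathrm{SP}(S^n))=\mathbb{Z}$ for $q=n$ and $0$ for $0<q\ne n$. Hence $h_*$ is a reduced homology theory satisfying the dimension axiom with coefficient group $\mathbb{Z}$, and the uniqueness theorem provides natural isomorphisms $h_q(\mathsf{X})=\pi_q(\mathrm{SP}(\mathsf{X},\mathsf{x}_0))\cong\widetilde H_q(\mathsf{X},\mathbb{Z})$ for all $q>0$, which is part (i). For part (ii) one repeats the same argument with $\mathrm{AG}$ in place of $\mathrm{SP}$ (the gluing machinery applies verbatim, and all the exact sequences become sequences of abelian groups since $\mathrm{AG}(\mathsf{X})$ is a topological group); alternatively, for connected $\mathsf{X}$ the monoid $\mathrm{SP}(\mathsf{X},\mathsf{x}_0)$ is connected, hence group-like up to homotopy, so the natural map $\mathrm{SP}(\mathsf{X},\mathsf{x}_0)\to\mathrm{AG}(\mathsf{X},\mathsf{x}_0)$ to its group completion is a weak homotopy equivalence and (ii) follows from (i).
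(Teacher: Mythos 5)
The paper offers no proof of Theorem~\ref{thm-dt58}: it is simply recalled from \cite[Satz 6.10]{DT58} (and is invoked again in the proof of Proposition~\ref{prop-we} via Satz~6.10~III). So there is nothing in the paper to compare your sketch against; I evaluate it on its own. Your overall strategy --- verify the Eilenberg--Steenrod axioms for $\pi_*(\mathrm{SP}(-))$ using the Dold--Thom quasifibration theorem, then invoke uniqueness --- is the standard modern route and is sound in outline.

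There is, however, a genuine gap in the coefficient computation. The quasifibration theorem for $\mathrm{SP}(\mathsf{X})\to\mathrm{SP}(\mathsf{X}/\mathsf{A})$ requires the subcomplex $\mathsf{A}$ to be \emph{connected}; consequently the suspension isomorphism obtained from the pair $(C\mathsf{X}',\mathsf{X}')$ is available only for connected $\mathsf{X}'$, and the reduction from $S^n$ can be iterated only down to $S^1$, not to $S^0$ as you do. One can see the failure concretely for $(CS^0,S^0)=([0,1],\{0,1\})$: the fibre of $\mathrm{SP}([0,1],0)\to\mathrm{SP}(S^1,*)$ over $*$ is $\mathbb{N}$, and a path in $\mathrm{SP}([0,1],0)$ from $\emptyset$ to $m\cdot\{1\}$ projects to a loop of winding number exactly $m\ge 0$ (lift the ``sum of coordinates'' map to $\mathbb{R}$), so $\pi_1(\mathrm{SP}([0,1]),\mathbb{N},\emptyset)\to\pi_1(\mathrm{SP}(S^1))\cong\mathbb{Z}$ misses all negative classes --- the quasifibration property does not hold. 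To fix this you must either compute $\pi_*(\mathrm{SP}(S^1))\cong\pi_*(S^1)$ separately (for instance from the classical homotopy equivalences $\mathrm{SP}^n(S^1)\simeq S^1$), or run the dimension-axiom step with $\mathrm{AG}$ in place of $\mathrm{SP}$, where the fibre is the group $\mathbb{Z}$ and the obstruction disappears, and then transfer to $\mathrm{SP}$. Relatedly, your closing claim that for connected $\mathsf{X}$ the map $\mathrm{SP}(\mathsf{X})\to\mathrm{AG}(\mathsf{X})$ is automatically a weak equivalence because $\mathrm{SP}(\mathsf{X})$ is ``group-like up to homotopy'' is overstated as a one-liner: this is precisely \cite[Satz 6.10 III]{DT58}, and the paper's Propositions~\ref{prop-surj-monoid} and~\ref{prop-we} exist because the surjectivity on homotopy groups requires an actual argument.
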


\begin{comment}
\begin{corollary}\label{cor-dt58}
Let $(\mathsf{X},\mathsf{x}_0)$ be a pointed countable  simplicial complex.
The natural homomorphism of simplicial abelian groups
\begin{eqnarray}\label{cor-dt58-1}
\mathbb{Z}(\mathrm{Sin}_{\bullet}(\mathsf{X}))\rightarrow  \mathrm{Sin}_{\bullet}(\mathrm{AG}(\mathsf{X},x_0))
\end{eqnarray}
is a weak equivalence.
\end{corollary}
Proof : \cite[chapter 3, cor. 2.7]{GJ99}, \cite[7.40]{Swi75}, \cite[7.55]{Swi75} ... \cite[Satz 6.4]{DT58}.
\end{comment}

\begin{comment}
\begin{definition}
Let $(\mathsf{X},\mathsf{x}_0)$ be a pointed topological space.
 By the equivalence relation (\ref{eq-def-SP}) in definition \ref{def-SP}, there is a natural homomorphism of abelian monoids
\[
\Upsilon_i:\Hom(\mathsf{\Delta}_i,\coprod_{j=0}^{\infty}S^j(\mathsf{X}))\rightarrow \Hom(\mathsf{\Delta}_i,\mathrm{SP}(\mathsf{X},\mathsf{x}_0)),
\]
and $\Upsilon=(\Upsilon_i)_{i\geq 0}$ form a morphism of simplicial abelian monoids. The maps $\Upsilon_i$ extend to
\[
\Upsilon_i^{+}:\Hom(\mathsf{\Delta}_i,\coprod_{j=0}^{\infty}S^j(\mathsf{X}))^{+}\rightarrow \Hom(\mathsf{\Delta}_i,\mathrm{AG}(\mathsf{X},\mathsf{x}_0)),
\]
and 
\[
\Upsilon=(\Upsilon_i^{+})_{i\geq 0}: \Hom(\mathsf{\Delta}_{\bullet},\coprod_{j=0}^{\infty}S^j(\mathsf{X}))^{+}\rightarrow \Hom(\mathsf{\Delta}_{\bullet},\mathrm{AG}(\mathsf{X},\mathsf{x}_0))
\]
is a morphism of simplicial abelian groups.  
\end{definition}
\end{comment}

We denote by $\mathsf{\Delta}^i$ the topological $i$-simplex, and by $\mathrm{Sin}(\mathsf{X})$ the simplicial set $\mathrm{Hom}(\mathsf{\Delta}^{\bullet},\mathsf{X})$ of continuous maps of simplices to a topological space $\mathsf{X}$.

\begin{definition}
Let $(\mathsf{X},\mathsf{x}_0)$ be a pointed topological space. The projection $\coprod_{j=0}^{\infty}\mathrm{S}^j(\mathsf{X})\rightarrow \mathrm{SP}(\mathsf{X},\mathsf{x}_0)$ induces a map of simplicial abelian monoids
\[
\Upsilon: \mathrm{Sin}\big(\coprod_{j=0}^{\infty}\mathrm{S}^j(\mathsf{X})\big)\rightarrow \mathrm{Sin}\big(\mathrm{SP}(\mathsf{X},\mathsf{x}_0)\big)
\]
which extends to a map of simplicial abelian groups
\[
\Upsilon^{+}: \mathrm{Sin}\big(\coprod_{j=0}^{\infty}\mathrm{S}^j(\mathsf{X})\big)^{+}\rightarrow \mathrm{Sin}\big(\mathrm{SP}(\mathsf{X},\mathsf{x}_0)\big)^{+}.
\]  
\end{definition}
We have a commutative diagram of simplical monoids and simplical abelian groups
\begin{equation*}
\xymatrix{
  \mathrm{Sin}\big(\coprod_{j=0}^{\infty}S^j(\mathsf{X})\big) \ar[d]_{\Upsilon} \ar[r] &
  \mathrm{Sin}\big(\coprod_{j=0}^{\infty}S^j(\mathsf{X})\big)^{+} \ar[d]^{\Upsilon^{+}} & \\
  \mathrm{Sin}\big(\mathrm{SP}(\mathsf{X},\mathsf{x}_0)\big) \ar[r]^{\Theta} & \mathrm{Sin}\big(\mathrm{SP}(\mathsf{X},\mathsf{x}_0)\big)^{+} \ar[r]^{\Xi} & 
  \mathrm{Sin}\big(\mathrm{AG}(\mathsf{X},\mathsf{x}_0)\big).
}
\end{equation*}
By \cite[1.5, 17.1]{May67}, all the above simplicial sets are Kan complexes.
\begin{comment}
\begin{lemma}
Let $G$ be a connected simplicial abelian monoid with an identity element $e$, and suppose that $G$ is a Kan complex. Let $G^{+}$ be the group completion of $G$.  Then the natural maps
$\pi_n(G,e)\rightarrow \pi_n(G^{+},e)$ are surjective for $n\geq 0$.
\end{lemma}
Proof: Let $y-z\in G^{+}_n$, where $y,z\in G_n$, such that $\partial_i y=\partial_i z$ for $0\leq i\leq n$.
\end{comment}

\begin{proposition}\label{prop-surj-monoid}
Let $G$ be a path connected  topological abelian monoid with unit element $e$. Then the natural  maps of homotopy groups
\[
\pi_n(\mathrm{Sin}(G,e))\rightarrow \pi_n(\mathrm{Sin}(G,e)^{+})
\] induced by the map from the simplical monoid $\mathrm{Sin}(G,e)$ to its simplicial group completion $\mathrm{Sin}(G,e)^{+}$, are surjective for $n\geq 0$.
\end{proposition}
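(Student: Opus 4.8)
The plan is to show that every element of $\pi_n\big(\mathrm{Sin}(G,e)^{+}\big)$ is hit by $\pi_n\big(\mathrm{Sin}(G,e)\big)$, by recognizing it as a \emph{spherical} homology class of $G$. Write $S_\bullet=\mathrm{Sin}(G,e)$. The case $n=0$ is immediate: $G$ path connected gives $\pi_0(S_\bullet)=\pi_0(G)=0$, while $\pi_0(S_\bullet^{+})=H_0(S_\bullet^{+})=0$ as well, since in the Grothendieck group of $|G|$ every vertex is connected by $1$-simplices to $e$ and $[e]=0$. So assume $n\ge 1$.

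Since $S_\bullet^{+}$ is a simplicial abelian group, $\pi_n(S_\bullet^{+})$ is the $n$-th homology of the complex $\big(S_k^{+},\sum_i(-1)^i\partial_i\big)$; represent a given class by $\alpha\in S_n^{+}$ with $\partial_i\alpha=0$ for all $i$, and write $\alpha=[y]-[z]$ with $y,z\in S_n$. From $\partial_i\alpha=0$ we get $[\partial_iy]=[\partial_iz]$ in $S_{n-1}^{+}$, hence there is $c_i\in S_{n-1}$ with $\partial_iy+c_i=\partial_iz+c_i$ in the monoid $S_{n-1}$. Now clear all the $c_i$ simultaneously: set $C=\sum_{i=0}^n s_i(c_i)\in S_n$ and replace $y,z$ by $y+C,z+C$ (which leaves $\alpha$ unchanged). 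Using $\partial_js_j=\id$ one finds $\partial_j(y+C)=(\partial_jy+c_j)+r_j$ and $\partial_j(z+C)=(\partial_jz+c_j)+r_j$ with the \emph{same} $r_j=\sum_{i\ne j}\partial_js_i(c_i)$, whence $\partial_j(y+C)=\partial_j(z+C)$ for every $j$. So we may assume $\partial_iy=\partial_iz$ for all $i$; equivalently $y$ and $z$ are continuous maps $\mathsf\Delta^n\to G$ that agree on $\partial\mathsf\Delta^n$, and $[y]-[z]$ is an \emph{honest cycle} of the singular chain complex $C_\bullet(G;\mathbb{Z})=\mathbb{Z}[S_\bullet]$.

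The group completion map $S_\bullet\to S_\bullet^{+}$ factors as $S_\bullet\to\mathbb{Z}[S_\bullet]\to S_\bullet^{+}$ (the unit of the free abelian group functor, followed by the levelwise quotient by $[a+b]=[a]+[b]$, $[e]=0$), so on $\pi_n$ it factors our map as $\pi_n(S_\bullet)=\pi_n(G,e)\xrightarrow{\ \mathrm{Hur}\ }H_n(G;\mathbb{Z})=\pi_n(\mathbb{Z}[S_\bullet])\to\pi_n(S_\bullet^{+})$, with $\mathrm{Hur}$ the Hurewicz homomorphism and the last arrow induced by the surjection of complexes $\mathbb{Z}[S_\bullet]\to S_\bullet^{+}$. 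Since the cycle $[y]-[z]$ plainly maps to $\alpha$, it remains only to check that its class in $H_n(G;\mathbb{Z})$ lies in the image of $\mathrm{Hur}$, i.e.\ is spherical.

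Here the equality of faces enters. Gluing two copies of $\mathsf\Delta^n$ along $\partial\mathsf\Delta^n$ yields the topological sphere $S^n$; since $y$ and $z$ agree on $\partial\mathsf\Delta^n$ they glue to a continuous map $g\colon S^n\to G$ restricting to $y$ and $z$ on the two hemispheres $\sigma_1,\sigma_2$. The chain $\sigma_1-\sigma_2$ is a cycle representing a fundamental class of $S^n$, and $g_\#(\sigma_1-\sigma_2)=[y]-[z]$; hence the class of $[y]-[z]$ equals $g_*[S^n]$. A fundamental class of $S^n$ is spherical (Hurewicz, $n\ge 1$), say $[S^n]=\mathrm{Hur}(\theta)$ with $\theta\in\pi_n(S^n)$, so naturality of $\mathrm{Hur}$ gives that the class of $[y]-[z]$ equals $\mathrm{Hur}(g_*\theta)$; transporting the basepoint of $g_*\theta\in\pi_n(G,g(\ast))$ to $e$ along a path in $G$ (harmless, since basepoint transport in the simplicial abelian group $S_\bullet^{+}$ agrees with translation and is therefore compatible with $S_\bullet\to S_\bullet^{+}$) produces the desired preimage of $\alpha$. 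The one genuinely non-formal point is the simultaneous clearing of the $c_i$ by $\sum_i s_i(c_i)$ in the second paragraph: it is exactly what upgrades ``$[\partial_iy]=[\partial_iz]$ in $S_{n-1}^{+}$'' to ``$\partial_iy=\partial_iz$'', and so makes $[y]-[z]$ a genuine, manifestly spherical, cycle; without it the argument does not even start.
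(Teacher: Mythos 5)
Your proof is correct, and it takes a genuinely different route from the paper's. The paper argues directly and topologically: after writing $\alpha=[\beta]-[\gamma]$ it asserts $\partial_i\beta=\partial_i\gamma$, transfers everything to cubes, constructs a representative $\sigma$ with constant boundary by stacking $\beta'$ and the reversed $\gamma'$, and then verifies $\sigma\sim\beta'-\gamma'$ through a sequence of explicit boundary-preserving homotopies built from the monoid addition. You instead factor the map $\pi_n(S_\bullet)\to\pi_n(S_\bullet^{+})$ through the Hurewicz homomorphism via $S_\bullet\to\mathbb{Z}[S_\bullet]\to S_\bullet^{+}$, reduce to showing $[y-z]\in H_n(G)$ is spherical, and exhibit it as $g_*$ of a fundamental class of $S^n$ by gluing $y$ and $z$ along $\partial\mathsf{\Delta}^n$. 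This is shorter and more conceptual, at the cost of invoking standard facts (Hurewicz for $S^n$, compatibility of Hurewicz with basepoint change). One genuine improvement: the paper's inference ``$\alpha=[\beta]-[\gamma]$, thus $\partial_i\beta=\partial_i\gamma$'' only follows if the monoid $S_{n-1}=\Hom(\mathsf{\Delta}^{n-1},G)$ is cancellative, which is not part of the hypotheses (it does hold in the intended application where $G=\mathrm{SP}(\mathsf{X},\mathsf{x}_0)$ is a free commutative monoid, but the proposition is stated in general). Your simultaneous clearing step $C=\sum_i s_i(c_i)$, using the simplicial identity $\partial_j s_j=\id$, replaces the pair $(y,z)$ by one that actually satisfies $\partial_i y=\partial_i z$, so your version of the argument is valid for arbitrary path-connected topological abelian monoids.
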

Proof: Suppose  $\alpha\in \mathrm{Sin}_n(G,e)^{+}$  such that $\partial_i \alpha=0$ for $0\leq i\leq n$. There are continuous maps $\beta:\mathsf{\Delta}^n\rightarrow G$ and $\gamma:\mathsf{\Delta}^n\rightarrow G$ such that $\alpha=[\beta]-[\gamma]$,  thus $\partial_i \beta=\partial_i \gamma$ as maps $\mathsf{\Delta}^{n-1}\rightarrow G$. We need to show that there is a map $\sigma:\mathsf{\Delta}^n\rightarrow G$ such that $\partial_i \sigma$ equals the constant map $\mathsf{\Delta}^{n-1}\rightarrow e$ for $0\leq i\leq n$, and $\sigma$ is homotopic to $\beta- \gamma $ in  $\mathrm{Sin}(G,e)^{+}$. For convenience we write $\beta$ and $\gamma$ as maps from the $n$-cube $[0,1]^n$ to $G$. It is clear that there are maps 
\[
\beta': [0,1]^n\rightarrow G
\]
and  
\[
\gamma':[0,1]^n\rightarrow G
\]
such that $\beta- \gamma$ is boundary-preserving homotopic to $\beta'-\gamma'$ in $\mathrm{Sin}(G,e)^{+}$ and 
\begin{equation}\label{eq-surj-monoid-1}
\beta'|_{[0,1]^{n-1}\times{(1,0)}}=\gamma'_{[0,1]^{n-1}\times{(1,0)}}:[0,1]^{n-1}\rightarrow G,
\end{equation}
and 
\[
\beta'|_{\partial([0,1]^n)\backslash [0,1]^{n-1}\times{(1,0)}}=\gamma'|_{\partial([0,1]^n)\backslash [0,1]^{n-1}\times{(1,0)}}=x_0
\]
where $x_0$ is a point in $G$ that lies in the image of (\ref{eq-surj-monoid-1}).  Since $G$ is path connected, choosing a path from $x_0$ to $e$ we can find $\beta''$ and $\gamma''$ such that $\beta''-\gamma''$ is boundary-preserving homotopic to $\beta'-\gamma'$ in $\mathrm{Sin}(G,e)^{+}$, $\beta''$ and $\gamma''$ satisfy (\ref{eq-surj-monoid-1}), and 
\[
\beta''|_{\partial([0,1]^n)\backslash [0,1]^{n-1}\times{(1,0)}}=\gamma''|_{\partial([0,1]^n)\backslash [0,1]^{n-1}\times{(1,0)}}=e.
\]
So without loss of generality we assume $x_0=e$ and still consider $\beta'$ and $\gamma'$. We denote the map (\ref{eq-surj-monoid-1}) by $\tau:[0,1]^{n-1}\rightarrow G$. Writing a point of $[0,1]^n$ as $(x,y)$, where $x\in [0,1]^{n-1}$ and $y\in [0,1]$, we set
\begin{equation}\label{eq-surj-monoid-2}
\sigma(x,y)=\begin{cases}
\beta'(x,2y), & 0\leq y\leq \frac{1}{2},\\
\gamma'(x,2-2y), & \frac{1}{2}\leq y\leq 1.
\end{cases}
\end{equation}
Then $\sigma|_{\partial [0,1]^n}=e$. It suffices to show that $\sigma-(\beta'-\gamma')$ is homotopic to 0 in $\mathrm{Sin}(G,e)^{+}$. It is clear that $\gamma'$ is boundary-preserving homotopic to the map 
\begin{equation}\label{eq-surj-monoid-3}
\gamma'''(x,y)=\begin{cases}
e, & 0\leq y\leq \frac{3}{4},\\
\gamma'(x,4y-3), & \frac{3}{4}\leq y\leq 1,
\end{cases}
\end{equation}
and $\sigma$ is boundary-preserving homotopic to the map
\begin{equation}\label{eq-surj-monoid-4}
\sigma'(x,y)=\begin{cases}
\beta'(x,2y), & 0\leq y\leq \frac{1}{2},\\
\gamma'(x,3-4y), & \frac{1}{2}\leq y\leq \frac{3}{4},\\
e, & \frac{3}{4}\leq y\leq 1.
\end{cases}
\end{equation}
Then $\sigma+\gamma'$ is  boundary-preserving homotopic to
\begin{equation}\label{eq-surj-monoid-5}
(\sigma'+\gamma''')(x,y)=\begin{cases}
\beta'(x,2y), & 0\leq y\leq \frac{1}{2},\\
\gamma'(x,3-4y), & \frac{1}{2}\leq y\leq \frac{3}{4},\\
\gamma'(x,4y-3), & \frac{3}{4}\leq y\leq 1.
\end{cases}
\end{equation}
We define a homotopy $F: [0,1]^n\times[0,1]$ by
\begin{equation}\label{eq-surj-monoid-6}
F(x,y,t)=\begin{cases}
\beta'(x,2y), & 0\leq y\leq \frac{1}{2},\\
\gamma'(x,1+t(2-4y)), & \frac{1}{2}\leq y\leq \frac{3}{4},\\
\gamma'(x,1+t(4y-4), & \frac{3}{4}\leq y\leq 1.
\end{cases}
\end{equation}
Then $F(x,y,t)$ is a boundary preserving homotopy from $\sigma'+\gamma'''$ to
\begin{equation*}
F(x,y,0)=\begin{cases}
\beta'(x,2y), & 0\leq y\leq \frac{1}{2},\\
\tau(x), & \frac{1}{2}\leq y\leq 1,
\end{cases}
\end{equation*}
which in turn is boundary preserving homotopic to $\beta'$. Thus we are done.\pqed

\begin{proposition}\label{prop-we}
Suppose $\mathsf{X}$ is path connected and admits a countable triangulation. Then $\Theta$ is a weak equivalence.
\end{proposition}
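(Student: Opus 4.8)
The plan is to prove that $\Theta$ induces an isomorphism on every homotopy group. Both $\mathrm{Sin}(\mathrm{SP}(\mathsf{X},\mathsf{x}_0))$ and $\mathrm{Sin}(\mathrm{SP}(\mathsf{X},\mathsf{x}_0))^{+}$ are Kan complexes (as recalled just above) and are connected: $\mathsf{X}$, and hence $\mathsf{X}\vee\mathsf{X}$, is path connected, so $\mathrm{SP}(\mathsf{X},\mathsf{x}_0)$, $\mathrm{AG}(\mathsf{X},\mathsf{x}_0)$ and their group completions are path connected. Thus it suffices to show that $\Theta_{*}\colon\pi_{n}(\mathrm{Sin}(\mathrm{SP}(\mathsf{X},\mathsf{x}_0)))\to\pi_{n}(\mathrm{Sin}(\mathrm{SP}(\mathsf{X},\mathsf{x}_0))^{+})$ is bijective at the base vertex for all $n\geq 1$ (it is trivially so for $n=0$).

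Surjectivity of $\Theta_{*}$ is immediate from Proposition \ref{prop-surj-monoid} applied to the path connected topological abelian monoid $G=\mathrm{SP}(\mathsf{X},\mathsf{x}_0)$: the map $\mathrm{Sin}(G,e)\to\mathrm{Sin}(G,e)^{+}$ appearing there is precisely $\Theta$.

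For injectivity I would bring in $\Xi$. By construction $\Xi$ is induced, via the universal property of group completion, by the canonical embedding $\iota_{1}\colon\mathrm{SP}(\mathsf{X},\mathsf{x}_0)\hookrightarrow\mathrm{AG}(\mathsf{X},\mathsf{x}_0)$, so that $\Xi\circ\Theta=\mathrm{Sin}(\iota_{1})$. Using the natural identifications $\pi_{n}(\mathrm{Sin}(Y))\cong\pi_{n}(Y)$ for a space $Y$ together with Theorem \ref{thm-dt58}, we get $\pi_{q}(\mathrm{Sin}(\mathrm{SP}(\mathsf{X},\mathsf{x}_0)))\cong H_{q}(\mathsf{X},\mathbb{Z})\cong\pi_{q}(\mathrm{Sin}(\mathrm{AG}(\mathsf{X},\mathsf{x}_0)))$ for $q>0$ (both spaces being connected), and under these identifications $(\Xi\circ\Theta)_{*}=\iota_{1*}$ is an isomorphism. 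Hence $\Xi_{*}\circ\Theta_{*}$ is injective, so $\Theta_{*}$ is injective; combined with surjectivity, $\Theta_{*}$ is bijective in every degree, and $\Theta$ is a weak equivalence.

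The step that requires real care — and which I regard as the main obstacle — is the compatibility of $\iota_{1}$ with the Dold–Thom isomorphisms, i.e. that $\iota_{1*}$ corresponds to an automorphism (in fact the identity) of $H_{q}(\mathsf{X},\mathbb{Z})$; without this the displayed composite is not visibly an isomorphism. I would deduce it from the naturality in Theorem \ref{thm-dt58}: the assignments $\mathsf{Y}\mapsto\pi_{*}(\mathrm{SP}(\mathsf{Y},\mathsf{y}_0))$ and $\mathsf{Y}\mapsto\pi_{*}(\mathrm{AG}(\mathsf{Y},\mathsf{y}_0))$ on pointed connected simplicial complexes are reduced ordinary homology theories, $\iota_{1}$ is a natural transformation between them, and on $S^{0}$ it induces, after group-completing $\pi_{0}$, the identity of $\mathbb{Z}\cong\tilde{H}_{0}(S^{0})$; uniqueness of ordinary homology then forces $\iota_{1*}$ to be an isomorphism on all homotopy groups. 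Alternatively, the statement that $\iota_{1}\colon\mathrm{SP}(\mathsf{X},\mathsf{x}_0)\to\mathrm{AG}(\mathsf{X},\mathsf{x}_0)$ is a weak homotopy equivalence can be taken directly from \cite{DT58}. Everything else in the argument — the connectedness and Kan properties, the identity $\Xi\circ\Theta=\mathrm{Sin}(\iota_{1})$, and the passage between the homotopy groups of a space and of its singular complex — is formal.
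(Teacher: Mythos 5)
Your proposal is correct and follows essentially the same route as the paper: surjectivity of $\Theta_*$ from Proposition \ref{prop-surj-monoid}, and injectivity from the fact that $\Xi\circ\Theta$ is a weak equivalence, which the paper draws directly from \cite[Satz 6.10 III]{DT58}. The compatibility issue you flag as the ``main obstacle'' is in fact already settled by that citation, which asserts precisely that the inclusion $\mathrm{SP}(\mathsf{X},\mathsf{x}_0)\hookrightarrow\mathrm{AG}(\mathsf{X},\mathsf{x}_0)$ is a weak homotopy equivalence, so your alternative (more elaborate) argument via uniqueness of ordinary homology theories is not needed.
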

Proof: By \cite[Satz 6.10 III]{DT58}, $\Xi\circ \Theta$ is a weak equivalence, so 
\begin{equation}\label{eq-prop-we}
\pi_i\big(\mathrm{Sin}\big(\mathrm{SP}(\mathsf{X},\mathsf{x}_0)\big)\big)\rightarrow \pi_i\big(\mathrm{Sin}\big(\mathrm{SP}(\mathsf{X},\mathsf{x}_0)\big)^{+}\big)
\end{equation}
 is injective for any $i$. Since $\mathrm{SP}(\mathsf{X},\mathsf{x}_0)$ is a topological abelian monoid and is path connected,  (\ref{eq-prop-we}) is also surjective by proposition \ref{prop-surj-monoid}. \pqed

\begin{proposition}\label{prop-kan-fib}
Suppose $\mathsf{X}$ is path connected. Then  $\Upsilon^{+}$ is a Kan fibration, and there is a fiber sequence of Kan pairs
\begin{equation}\label{eq-fib-seq}
(\mathbb{Z},0)\rightarrow \Big(\mathrm{Sin}\big(\coprod_{j=0}^{\infty}S^j(\mathsf{X})\big)^{+},0\Big) \rightarrow \Big(\mathrm{Sin}\big(\mathrm{SP}(\mathsf{X},\mathsf{x}_0)\big)^{+},0\Big),
\end{equation}
where $\mathbb{Z}$ is, by an abuse of notation, the simplical abelian group generated by $\mathbb{Z}$, which is isomorphic to $\mathrm{Sin}(\mathbb{Z})$ if we view $\mathbb{Z}$ as a discrete topological group.
\end{proposition}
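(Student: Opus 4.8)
The plan is to split the statement into two parts: (1) $\Upsilon^{+}$ is a Kan fibration, for which it suffices to check that it is surjective in every simplicial degree, since a degreewise-surjective homomorphism of simplicial groups is a Kan fibration with fibre its (degreewise) kernel; and (2) this kernel, which is then the fibre over $0$, is isomorphic as a simplicial abelian group to the constant object $\mathbb{Z}=\mathrm{Sin}(\mathbb{Z})$.

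First I would prove surjectivity of $\Upsilon$ --- hence of its group completion $\Upsilon^{+}$ --- in each degree $n$. Fix a continuous map $\gamma\colon\mathsf{\Delta}^{n}\to\mathrm{SP}(\mathsf{X},\mathsf{x}_{0})=\varinjlim_{i}\mathrm{S}^{i}(\mathsf{X},\mathsf{x}_{0})$. Since $\mathsf{\Delta}^{n}$ is compact and the colimit is formed along the closed embeddings $\mathrm{S}^{i}(\mathsf{X})\hookrightarrow\mathrm{S}^{i+1}(\mathsf{X})$, $\gamma$ factors continuously through some $\mathrm{S}^{i}(\mathsf{X})$; viewing $\mathrm{S}^{i}(\mathsf{X})$ as the $i$-th summand of $\coprod_{j}\mathrm{S}^{j}(\mathsf{X})$, this produces a preimage of $\gamma$ under $\Upsilon_{n}$, and extending $\mathbb{Z}$-linearly gives surjectivity of $\Upsilon^{+}_{n}$. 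By the cited fact on simplicial groups, $\Upsilon^{+}$ is then a Kan fibration and $F:=\ker(\Upsilon^{+})$ is its fibre over $0$.

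Next I would identify $F$. Write $\mathsf{Y}=\coprod_{j}\mathrm{S}^{j}(\mathsf{X})$, a topological abelian monoid graded by $\mathbb{N}$, and let $p\colon\mathsf{Y}\to\mathrm{SP}(\mathsf{X},\mathsf{x}_{0})$ be the projection. I claim that for each $n$, $\mathrm{Sin}_{n}(\mathrm{SP}(\mathsf{X},\mathsf{x}_{0}))$ is, as a monoid, the quotient of $\mathrm{Sin}_{n}(\mathsf{Y})$ by the congruence generated by the pair $(c_{\mathsf{x}_{0}}^{(n)},0)$, where $c_{\mathsf{x}_{0}}^{(n)}\colon\mathsf{\Delta}^{n}\to\mathrm{S}^{1}(\mathsf{X})$ is the constant map at $\mathsf{x}_{0}$ and $0$ is the unit: surjectivity was just shown, $p\bigl(\alpha+k\,c_{\mathsf{x}_{0}}^{(n)}\bigr)=p\alpha$ for all $k$, and conversely if $\alpha\in\mathrm{Sin}_{n}(\mathrm{S}^{i}(\mathsf{X}))$ and $\beta\in\mathrm{Sin}_{n}(\mathrm{S}^{j}(\mathsf{X}))$ (any simplex lands in a single summand, as $\mathsf{\Delta}^{n}$ is connected) satisfy $p\alpha=p\beta$ with $i\le j$, then evaluating at each point of $\mathsf{\Delta}^{n}$ and using that $\mathrm{S}^{j}(\mathsf{X})$ embeds into $\mathrm{SP}(\mathsf{X},\mathsf{x}_{0})$ forces $\beta=\alpha+(j-i)\,c_{\mathsf{x}_{0}}^{(n)}$. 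Since group completion is a left adjoint, it turns this quotient of monoids into the quotient of $\mathrm{Sin}_{n}(\mathsf{Y})^{+}$ by the subgroup generated by $[c_{\mathsf{x}_{0}}^{(n)}]$. The grading induces a homomorphism $\deg\colon\mathrm{Sin}_{n}(\mathsf{Y})^{+}\to\mathbb{Z}$ with $\deg[c_{\mathsf{x}_{0}}^{(n)}]=1$, so that subgroup is infinite cyclic, whence $F_{n}=\ker(\Upsilon^{+}_{n})=\mathbb{Z}\cdot[c_{\mathsf{x}_{0}}^{(n)}]\cong\mathbb{Z}$. The faces and degeneracies carry $c_{\mathsf{x}_{0}}^{(n)}$ to the analogous constant simplices, so this identification is simplicial and exhibits $F$ as the constant simplicial abelian group $\mathbb{Z}=\mathrm{Sin}(\mathbb{Z})$; together with the first part this yields the asserted fibre sequence of Kan pairs.

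The one point that needs care is the monoid computation in the second part --- checking that the kernel congruence of $\mathrm{Sin}_{n}(\mathsf{Y})\twoheadrightarrow\mathrm{Sin}_{n}(\mathrm{SP}(\mathsf{X},\mathsf{x}_{0}))$ is exactly the one generated by the constant $\mathsf{x}_{0}$-simplex, and that this description survives group completion; once it does, isolating the factor $\mathbb{Z}$ via the grading is formal. The point-set inputs (maps from a compact space into $\mathrm{SP}(\mathsf{X},\mathsf{x}_{0})$ factor through a finite stage of the colimit, and the inclusions $\mathrm{S}^{i}(\mathsf{X})\hookrightarrow\mathrm{S}^{i+1}(\mathsf{X})$ are closed) are standard for the symmetric products in play, and I would simply quote them; note that, although $\mathsf{X}$ is assumed path connected, the argument itself uses only connectedness of the simplices $\mathsf{\Delta}^{n}$.
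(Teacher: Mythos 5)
Your proof is correct, and it takes a cleaner route than the paper's. For the Kan fibration claim, the paper verifies the horn-filling condition directly: given compatible $\beta_i$ and $\alpha$, it factors the two constituents $\alpha_1,\alpha_2$ of $\alpha$ through some $S^m(\mathsf{X})$ by compactness of $\mathsf{\Delta}^{n+1}$, reads off the common grading shift $t$ from the $\beta_i$, and corrects by $|t|$ copies of the constant $\mathsf{x}_0$-simplex; for the fiber it then specializes $\beta_i=0$, $\alpha=0$ and asserts (somewhat tersely) that the preimage consists of constant simplices. You instead observe that $\Upsilon^{+}$ is a degreewise-surjective map of simplicial abelian groups and invoke the standard fact that such maps are Kan fibrations with fiber the degreewise kernel, thereby replacing the explicit horn-filling with a black box; your surjectivity check uses the same compactness input as the paper. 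Your fiber computation is the more careful of the two: the monoid-congruence description of $\mathrm{Sin}_n(\mathrm{SP})$ as the quotient of $\mathrm{Sin}_n\big(\coprod_j S^j(\mathsf{X})\big)$ by the single relation $c_{\mathsf{x}_0}^{(n)}\sim 0$, the passage through group completion as a left adjoint, and the degree homomorphism to isolate an infinite cyclic kernel, together make precise what the paper's final sentence only gestures at. (The paper's ``constant maps $\mathsf{\Delta}^n\to 0=S^0(\mathsf{X})$'' is a loose way of saying the fiber is the integer multiples of the distinguished constant simplex; your identification as $\mathbb{Z}\cdot[c_{\mathsf{x}_0}^{(n)}]$ via the grading is the clean version of this.) Your aside that only connectedness of $\mathsf{\Delta}^n$, not path-connectedness of $\mathsf{X}$, is used here is also accurate; path-connectedness of $\mathsf{X}$ is used elsewhere in the paper (e.g.\ Proposition \ref{prop-we} and Proposition \ref{prop-surj-monoid}), not in this step.
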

Proof: Suppose $\beta_0,...,\beta_{k-1},\beta_{k+1},...,\beta_{n+1}\in \Hom(\mathsf{\Delta}^{n},\coprod_{j=0}^{\infty}S^j(\mathsf{X}))^{+}$ satisfying 
\begin{equation}\label{eq-kan-fib-1}
\partial_{i}\beta_j=\partial_{j-1}\beta_i,\ \mbox{for}\ i<j,\ i\neq k,\ j\neq k,
\end{equation}
and $\alpha\in \Hom(\mathsf{\Delta}^{n+1},\mathrm{SP}(\mathsf{X},\mathsf{x}_0))^{+}$ such that
\begin{equation}\label{eq-kan-fib-2}
\partial_{i}\alpha=\Upsilon^{+}(\beta_i),\ \mbox{for}\ i\neq k.
\end{equation}
 Let $\beta_{i}=\beta_{i,1}-\beta_{i,2}$, where $\beta_{i,1}\in \Hom(\mathsf{\Delta}^{n},S^{j_{i,1}}(\mathsf{X}))$, $\beta_{i,2}\in \Hom(\mathsf{\Delta}^{n},S^{j_{i,2}}(\mathsf{X}))$,  for $0\leq i\leq n+1$, $i\neq k$. Then (\ref{eq-kan-fib-1}) implies that $j_{i,1}-j_{i,2}$ are equal for all $0\leq i\leq n+1$, $i\neq k$. We denote this common integer $j_{i,1}-j_{i,2}$ by $t$. Suppose $\alpha=\alpha_1- \alpha_2$, where $\alpha_1, \alpha_2\in \Hom(\mathsf{\Delta}^{n+1},\mathrm{SP}(\mathsf{X},\mathsf{x}_0))$. Since $\mathsf{\Delta}^{n+1}$ is compact, there exists $m\in \mathbb{N}$ such that $\alpha_i$ factors as 
\[
\mathsf{\Delta}^{n+1}\xrightarrow{\tilde{\alpha}_i} S^m(\mathsf{X}) \hookrightarrow \mathrm{SP}(\mathsf{X},\mathsf{x}_0)
\]
for $i=1,2$. Let $\tilde{\beta}=\tilde{\alpha}_1-\tilde{\alpha}_2\in \Hom(\mathsf{\Delta}_{n},S^m(\mathsf{X}))^{+}$. By (\ref{eq-kan-fib-2}), one has 
\[
\partial_i \tilde{\beta}+(\mathsf{\Delta}^n\rightarrow |t|\mathsf{x}_0)=\beta_i
\]
or 
\[
\partial_i \tilde{\beta}=\beta_i+\big(\mathsf{\Delta}^n\rightarrow (-|t|)\mathsf{x}_0\big),
\]
depending on $t$ is positive or negative. We set $\beta=\tilde{\beta}+(\mathsf{\Delta}^{n+1}\rightarrow |t|\mathsf{x}_0)$ or $\beta=\tilde{\beta}-\big(\mathsf{\Delta}^{n+1}\rightarrow (-|t|)\mathsf{x}_0\big)$ respectively. 
Then $\beta\in \Hom(\mathsf{\Delta}^{n+1},\coprod_{j=0}^{\infty}S^j(\mathsf{X}))^{+}$ and  $\partial_i \beta=\beta_i$ for $i\neq k$, and $\Upsilon^+(\beta)=\alpha$. This proves the first statement.

Taking $\beta_i=0$ for $0\leq i\leq n+1$, $i\neq k$, and $\alpha=0$, in the above proof, we obtain that the preimage $\Upsilon^{+}$ consists of the classes represented by the constant maps $\mathsf{\Delta}^n\rightarrow 0=S^0(\mathsf{X})$. So we have the fiber sequence (\ref{eq-fib-seq}). \pqed

\begin{corollary}\label{cor-kan-fibration}
Let $\mathsf{X}$ be the underlying space of a countable simplicial complex. Then there are natural isomorphisms 
\begin{equation}
\pi_i\Big(\mathrm{Sin}\big(\coprod_{j=0}^{\infty}S^j(\mathsf{X})\big)^{+},0\Big)\cong H_i^{\mathrm{sing}}(\mathsf{X},\mathbb{Z}). 
\end{equation}
\end{corollary}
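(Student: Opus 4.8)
The plan is to obtain the corollary by combining Proposition~\ref{prop-kan-fib}, Proposition~\ref{prop-we} and the Dold--Thom theorem~\ref{thm-dt58}(i), after first reducing to the case where $\mathsf{X}$ is path connected.

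\emph{Reduction to the connected case.} Write $\mathsf{X}=\bigsqcup_{k}\mathsf{X}_k$ as the disjoint union of its path components, each of which is again the underlying space of a countable simplicial complex. From the standard decomposition $S^j\big(\bigsqcup_{k}\mathsf{X}_k\big)=\coprod_{\sum_k a_k=j}\prod_k S^{a_k}(\mathsf{X}_k)$ (only finitely many $a_k>0$, and $S^0=\mathrm{pt}$) one sees that, as a topological abelian monoid, $\coprod_{j\geq 0}S^j(\mathsf{X})$ is the filtered colimit over the finite subsets $F$ of the index set, with transition maps the closed embeddings inserting the empty cycle, of the finite products $\prod_{k\in F}\coprod_{a\geq 0}S^a(\mathsf{X}_k)$. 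Since $\mathrm{Sin}(-)$ turns products into products and, the simplices $\mathsf{\Delta}^n$ being compact, commutes with such filtered colimits, since group completion commutes with colimits and with finite products, and since $\pi_i$ commutes with filtered colimits and with finite products, the computation of $\pi_i\big(\mathrm{Sin}(\coprod_{j}S^j(\mathsf{X}))^{+}\big)$ reduces to that of the groups $\pi_i\big(\mathrm{Sin}(\coprod_{a}S^a(\mathsf{X}_k))^{+}\big)$, compatibly with $H_i^{\mathrm{sing}}(\mathsf{X},\mathbb{Z})=\bigoplus_k H_i^{\mathrm{sing}}(\mathsf{X}_k,\mathbb{Z})$. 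So I may assume $\mathsf{X}$ path connected and nonempty.

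\emph{The connected case.} Fix a vertex $\mathsf{x}_0$ of $\mathsf{X}$ and write $M=\mathrm{Sin}\big(\coprod_{j}S^j(\mathsf{X})\big)^{+}$, $B=\mathrm{Sin}\big(\mathrm{SP}(\mathsf{X},\mathsf{x}_0)\big)^{+}$. Proposition~\ref{prop-kan-fib} provides a fiber sequence of Kan complexes $(\mathbb{Z},0)\to(M,0)\xrightarrow{\Upsilon^{+}}(B,0)$ with $\mathbb{Z}$ the discrete simplicial abelian group. In the associated long exact sequence $\pi_i(\mathbb{Z})=0$ for $i>0$ and $\pi_0(\mathbb{Z})=\mathbb{Z}$, so $\pi_i(M)\xrightarrow{\sim}\pi_i(B)$ for every $i\geq 2$. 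For $i=1$ I note that the fiber inclusion is injective on $\pi_0$: the monoid degree map $\coprod_{j}S^j(\mathsf{X})\to\mathbb{N}$ induces $M\to\mathbb{Z}$, and the composite $\mathbb{Z}=\pi_0(\mathbb{Z})\to\pi_0(M)\to\mathbb{Z}$ sends $n$ to the degree $n$ of the class of the constant simplex at $n\mathsf{x}_0$, hence is the identity. By exactness the connecting map $\pi_1(B)\to\pi_0(\mathbb{Z})$ vanishes, so $\pi_1(M)\xrightarrow{\sim}\pi_1(B)$, and since $\pi_0(B)=0$ also $\pi_0(M)=\mathbb{Z}$, matching $H_0^{\mathrm{sing}}(\mathsf{X},\mathbb{Z})=\mathbb{Z}$ compatibly with degree on one side and the augmentation on the other.

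\emph{Conclusion and naturality.} For $i\geq 1$, Proposition~\ref{prop-we} (applicable since $\mathsf{X}$ is path connected with a countable triangulation) says $\Theta$ is a weak equivalence, so $\pi_i\big(\mathrm{Sin}(\mathrm{SP}(\mathsf{X},\mathsf{x}_0))\big)\xrightarrow{\sim}\pi_i(B)$; combined with $\pi_i\big(\mathrm{Sin}(\mathrm{SP}(\mathsf{X},\mathsf{x}_0))\big)=\pi_i\big(\mathrm{SP}(\mathsf{X},\mathsf{x}_0)\big)$ ($\mathrm{Sin}$ preserves homotopy groups) and the Dold--Thom isomorphism $H_i(\mathsf{X},\mathbb{Z})\xrightarrow{\sim}\pi_i\big(\mathrm{SP}(\mathsf{X},\mathsf{x}_0)\big)$ of Theorem~\ref{thm-dt58}(i) --- valid because the underlying space of a connected countable simplicial complex is a connected pointed CW-complex --- this yields $H_i^{\mathrm{sing}}(\mathsf{X},\mathbb{Z})\cong\pi_i(M)$ for $i\geq 1$, and $i=0$ was treated above. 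All the maps used ($\Upsilon^{+}$, $\Theta$, the degree map, the Dold--Thom comparison) are natural in $\mathsf{X}$; since in positive degrees the Dold--Thom isomorphism for connected CW-complexes is independent of the basepoint, as is the degree map in degree $0$, the resulting isomorphism is independent of the auxiliary choice of $\mathsf{x}_0$ and is natural. The substantive input is entirely in the three cited propositions; the work here is assembly, and I expect the delicate points to be the vanishing of the degree-$1$ connecting map and the bookkeeping in the disconnected reduction.
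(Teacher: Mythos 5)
Your proof is correct and follows essentially the same route as the paper: reduce to $\mathsf{X}$ path connected, then run the long exact homotopy sequence of the fiber sequence from Proposition~\ref{prop-kan-fib}, identifying the base groups via Proposition~\ref{prop-we} and Theorem~\ref{thm-dt58}. The one real divergence is in showing the connecting map $\partial_\sharp\colon\pi_1(B)\to\pi_0(\mathbb{Z})$ vanishes. The paper computes $\partial_\sharp$ directly on a representative: a $1$-simplex $\alpha$ with trivial boundary is written as a difference $\alpha_1-\alpha_2$ and, by compactness of $\mathsf{\Delta}^1$, lifted to $\beta=\tilde\alpha_1-\tilde\alpha_2$ with both $\tilde\alpha_i$ landing in a single $\mathrm{S}^m(\mathsf{X})$, which forces $\partial_0\beta=\partial_1\beta=0$. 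You instead observe that the monoid degree map $\coprod_j\mathrm{S}^j(\mathsf{X})\to\mathbb{N}$ group-completes to a retraction of the fiber inclusion on $\pi_0$, so exactness kills $\partial_\sharp$; this is a bit slicker and simultaneously yields $\pi_0(M)\cong\mathbb{Z}$ without further argument. You also spell out the reduction to the connected case, which the paper dispatches as WLOG; your colimit-of-finite-products decomposition works because $\mathsf{\Delta}^n$ is compact and connected, so $\mathrm{Sin}$ commutes with the relevant filtered colimit. Both arguments are sound; yours trades the paper's explicit simplicial lift for a retraction on $\pi_0$, buying a little conceptual economy at no cost.
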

Proof: Without loss of generality we can assume that $\mathsf{X}$ is path connected.
 The fiber sequence (\ref{eq-fib-seq}) induces a long exact sequence of homotopy groups
 \[
  ...\rightarrow \pi_i(\mathbb{Z},0)\rightarrow 
  \pi_i\Big(\mathrm{Sin}\big(\coprod_{j=0}^{\infty}S^j(\mathsf{X})\big)^{+},0\Big)\rightarrow 
  \pi_i\Big(\big(\mathrm{SP}(\mathsf{X},\mathsf{x}_0)\big)^{+},0\Big)\rightarrow 
  \pi_{i-1}(\mathbb{Z},0)\rightarrow ...
  \] 
  Since $\pi_i(\mathbb{Z},0)=0$ for $i>0$ and $\pi_0(\mathbb{Z},0)=\mathbb{Z}$, by (\ref{thm-dt58}), it suffices to show that the connecting homomorphism
 \begin{equation}\label{eq-connection-map}
  \partial_{\sharp}: \pi_1\Big(\big(\mathrm{SP}(\mathsf{X},\mathsf{x}_0)\big)^{+},0\Big)\rightarrow \pi_0(\mathbb{Z},0)
 \end{equation}
  is zero. Let $\alpha\in \Hom(\mathsf{\Delta}^{1},\mathrm{SP}(\mathsf{X},\mathsf{x}_0))^{+}$ such that $\partial_0 \alpha=\partial_1 \alpha=0$. By the construction of $\partial_{\sharp}$ \cite[p. 27]{May67}, we need  to find $\beta\in \big(\Hom(\mathsf{\Delta}^{1},\coprod_{j=0}^{\infty}\mathrm{S}^j(\mathsf{X}))^{+},0\big)$ such that $\Upsilon^{+}(\beta)=\alpha$ and $\partial_1 \beta=0$, then $\partial_0 \beta=\partial_{\sharp}\alpha$. Suppose $\alpha=\alpha_1- \alpha_2$, where $\alpha_1, \alpha_2\in \Hom(\mathsf{\Delta}^{1},\mathrm{SP}(\mathsf{X},\mathsf{x}_0))$.   By the argument as the proof of proposition \ref{prop-kan-fib}, we can find $\tilde{\alpha}_{1},\tilde{\alpha}_2\in \Hom(\mathsf{\Delta}^{1},\mathrm{S}^m(\mathsf{X}))$ such that $\Upsilon^{+}(\tilde{\alpha}_{1}-\tilde{\alpha}_2)=\alpha_1- \alpha_2$. Let $\beta=\tilde{\alpha}_{1}-\tilde{\alpha}_2$, then $\partial_0 \beta=\partial_1 \beta=0$, hence $\partial_{\sharp}\alpha=0$. \pqed

Using corollary \ref{cor-kan-fibration}, the following theorem was shown in the proof of \cite[theorem 8.3]{SV96}.
\begin{theorem}\label{thm-SV8.3}
For every $X\in \mathbf{Sch}/\mathbb{C}$ and every $n\in\mathbb{Z}$, the homomorphism of complexes
\[
K(\Hom(\Delta^\bullet, \coprod_{j=0}^{\infty}\mathrm{S}^j(X))^+) \rightarrow  K(\mathrm{Sin}_\bullet(\coprod_{i=0}^\infty \mathrm{S}^i (X(\mathbb{C})))^+)
\]
 becomes a quasi-isomorphism after applying $\otimes^{\mathbf{L}} \mathbb{Z}/n$. 
\end{theorem}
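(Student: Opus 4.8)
The plan is to recognize both complexes as computing, after $\otimes^{\mathbf{L}}\mathbb{Z}/n$, known homology groups --- Suslin homology on the left, topological singular homology on the right --- and to identify the displayed arrow with the Suslin--Voevodsky comparison map; the assertion that it is a quasi-isomorphism is then precisely a step in the proof of \cite[Theorem 8.3]{SV96}, and the remaining work is bookkeeping together with one compatibility check. Throughout I take $X$ of finite type over $\mathbb{C}$, as in \cite{SV96}.

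First I would unwind the left-hand side. Since $\mathrm{char}(\mathbb{C})=0$, Proposition \ref{prop-iso-symprod} gives a natural isomorphism of simplicial abelian monoids $\Hom(\Delta^\bullet,\coprod_{j\geq 0}\mathrm{S}^j(X))\cong C^{\mathrm{eff}}_\bullet(X)$, and $C^{\mathrm{eff}}_i(X)$ is the free commutative monoid on the set of integral cycles in $\Delta^i\times X$ finite and surjective over $\Delta^i$; hence its degreewise group completion is $C_\bullet(X)$, a simplicial free abelian group, the left-hand complex is $K(C_\bullet(X))$, and $H_i$ of the left-hand side after $\otimes^{\mathbf{L}}\mathbb{Z}/n$ is the Suslin homology $H^S_i(X,\mathbb{Z}/n)=H_i(K(C_\bullet(X))\otimes\mathbb{Z}/n)$. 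On the right, fix a countable triangulation of $X(\mathbb{C})$; each $\mathrm{S}^i(X(\mathbb{C}))=X(\mathbb{C})^i/\mathfrak{S}_i$ is then triangulable and countable, so $\coprod_i\mathrm{S}^i(X(\mathbb{C}))$ is the underlying space of a countable simplicial complex, Corollary \ref{cor-kan-fibration} identifies $\pi_i\big(\mathrm{Sin}_\bullet(\coprod_i\mathrm{S}^i(X(\mathbb{C})))^+\big)$ with $H^{\mathrm{sing}}_i(X(\mathbb{C}),\mathbb{Z})$, and the universal coefficient theorem identifies $H_i$ of the right-hand side after $\otimes^{\mathbf{L}}\mathbb{Z}/n$ with $H^{\mathrm{sing}}_i(X(\mathbb{C}),\mathbb{Z}/n)$. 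Since the displayed map is induced by the inclusion $\mathsf{\Delta}^i\hookrightarrow\Delta^i(\mathbb{C})$ of the topological simplex together with the homeomorphisms $\mathrm{S}^j(X)(\mathbb{C})\cong\mathrm{S}^j(X(\mathbb{C}))$, under these identifications it becomes the natural comparison map $H^S_i(X,\mathbb{Z}/n)\to H^{\mathrm{sing}}_i(X(\mathbb{C}),\mathbb{Z}/n)$, and it remains to prove this is an isomorphism.

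For that, one route --- fitting the way Corollary \ref{cor-dual2} is used elsewhere --- is to dualize: by the natural isomorphism $\Hom(A,\mathbb{Z}/n)\cong\Hom_{\mathbb{Z}/n}(A\otimes\mathbb{Z}/n,\mathbb{Z}/n)$ and Corollary \ref{cor-dual2} (after replacing the right-hand complex by a quasi-isomorphic termwise-free one, which is harmless for $\otimes^{\mathbf{L}}\mathbb{Z}/n$), it suffices to show that $H^*_{\mathrm{sing}}(X(\mathbb{C}),\mathbb{Z}/n)\to H^*_S(X,\mathbb{Z}/n)$ is an isomorphism. This is the Suslin--Voevodsky theorem: by Proposition \ref{prop-presheaves-cycles} the Suslin complex of $X$ is computed by the qfh-sheaf $\mathbb{Z}(X)_{\mathrm{qfh}}$, so $H^*_S(X,\mathbb{Z}/n)$ is qfh-cohomology of $X$ with $\mathbb{Z}/n$-coefficients; the rigidity theorem for qfh-sheaves of $\mathbb{Z}/n$-modules ($n$ invertible in $\mathbb{C}$, automatic) identifies it with $H^*_{\et}(X,\mathbb{Z}/n)$, and Artin's comparison theorem identifies that with $H^*_{\mathrm{sing}}(X(\mathbb{C}),\mathbb{Z}/n)$. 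The step I expect to be the main obstacle is the \emph{compatibility}: one must verify that the explicitly, geometrically defined map of complexes induces, after group completion, the Dold--Thom identification of Corollary \ref{cor-kan-fibration}, qfh-rigidity, and Artin's theorem, the \emph{same} isomorphism, rather than one differing by an automorphism. This is handled as in \cite[\S 8]{SV96}: the constructions are functorial in $X$ and in the base field, and the comparison is a morphism of homology theories in a suitable sense that agrees on a point, so it is pinned down to the standard one. The only input beyond \cite{SV96} is Corollary \ref{cor-kan-fibration}, which supplies the proof of \cite[Theorem 8.2]{SV96} used there.
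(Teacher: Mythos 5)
Your preliminary reductions are fine: Proposition \ref{prop-iso-symprod} and Corollary \ref{cor-kan-fibration} identify the two complexes' mod-$n$ homology as $H^S_*(X,\mathbb{Z}/n)$ and $H^{\mathrm{sing}}_*(X(\mathbb{C}),\mathbb{Z}/n)$, and passing to the $\Hom(-,\mathbb{Z}/n)$-dual via Corollary \ref{cor-dual2} is the right move. You also correctly identify where the real work lies: the \emph{specific} induced map $H^*_{\mathrm{sing}}(X(\mathbb{C}),\mathbb{Z}/n)\to H^*_S(X,\mathbb{Z}/n)$ must be shown to be an isomorphism, and merely knowing that the two groups are abstractly isomorphic (via qfh-rigidity and Artin's comparison) is not enough; the map could a priori fail to realize that isomorphism.

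The gap is in how you dispose of this compatibility. The claim that the map is ``a morphism of homology theories in a suitable sense that agrees on a point, so it is pinned down to the standard one'' is not an argument as it stands --- two natural transformations between these theories that agree on a point need not coincide, and a uniqueness statement of that kind would itself require substantial proof --- and it also misdescribes what \cite[\S 8]{SV96} actually does. The Suslin--Voevodsky proof, which the paper reproduces, never first establishes an abstract isomorphism and then argues for agreement. It instead exhibits the map of the theorem as the left-hand vertical arrow in a commutative ladder of presheaf complexes whose two rows both terminate in a common object $\mathscr{M}^+_{\mathrm{top}}$, and then shows directly that every horizontal arrow in the top row becomes a quasi-isomorphism after $\Ext(-,\mathbb{Z}/n)$ by the algebraic rigidity theorem (\cite[Thms.\ 4.4, 7.6]{SV96}), and every horizontal arrow in the bottom row by the topological rigidity of Lemma \ref{lem-rigidity-top}. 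The conclusion then falls out of the diagram, with no separate ``pinning down'' step. To complete your proof you must either carry out that zig-zag argument or genuinely prove the uniqueness you are appealing to.
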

For the reader's convenience we sketch the proof of \cite[theorem 8.3]{SV96}. In loc. cit,  a commutative diagram is constructed:
\[
\xymatrix{
  j^*\big(C_*(Z)\big)\ar[r] \ar[d] & j^*(\mathscr{F}_*)^{\sim} \ar[d] & j^*(\mathscr{M}^+)  \ar[l] \ar@{=}[r] & \mathscr{M}_{\mathrm{top}}^{+} \ar[d]^{=} \\
  C_*(\mathscr{F}_{\mathrm{top}}) \ar[r] & \big((\mathscr{F}_{\mathrm{top}})_*\big)^{\sim} &
  (\mathscr{F}_{\mathrm{top}})^{\sim} \ar[l] \ar[l] \ar@{=}[r] & \mathscr{M}_{\mathrm{top}}^{+}.
}
\]
By \cite[theorem 7.6]{SV96}, applying $\Ext(\ \cdot\ ,\mathbb{Z}/n)$ to the upper row one obtain quasi-isomorphisms. These quasi-isomorphisms follow from the rigidity theorem \cite[theorem 4.4]{SV96}. Similarly the arrows in the lower row are quasi-isomorphisms, by lemma \ref{lem-rigidity-top}. Then the conclusion follows.\pqed

\begin{lemma}\label{lem-rigidity-top}
Let $\mathscr{F}$ be a homotopy invariant  presheaf on the category $\mathbf{TriTop}$ of triangulable topological spaces (i.e., the topological spaces which are underlying spaces of simplicial complexes). Then for any $\mathsf{X}\in \mathbf{TriTop}$ and a point $\mathsf{x}\in \mathsf{X}$, 
\[
\mathscr{F}_{\mathsf{x}}=\mathscr{F}(\mathrm{pt}).
\]
\end{lemma}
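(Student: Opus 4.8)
The plan is to reduce the statement to two facts: that triangulable spaces are locally contractible, with small contractible neighbourhoods that are themselves triangulable, and that a homotopy invariant presheaf inverts homotopy equivalences. Recall first that the stalk is the filtered colimit $\mathscr{F}_{\mathsf{x}}=\varinjlim_{U}\mathscr{F}(U)$ taken over the triangulable open neighbourhoods $U$ of $\mathsf{x}$ in $\mathsf{X}$ with the restriction maps, and that it may be computed over any cofinal subfamily.

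First I would produce a cofinal family of \emph{contractible} triangulable open neighbourhoods of $\mathsf{x}$. Choosing a triangulation $\mathsf{X}=|K|$ and adjoining $\mathsf{x}$ as a vertex via a stellar subdivision, the open stars of $\mathsf{x}$ in the iterated barycentric subdivisions $K^{(n)}$ are open, nested, and of diameter tending to $0$, hence form a neighbourhood basis at $\mathsf{x}$; each such open star is an open cone on the link of $\mathsf{x}$, so it is triangulable and an object of $\mathbf{TriTop}$, and each is contractible, a deformation retraction onto $\mathsf{x}$ being given by $(y,t)\mapsto(1-t)y+t\mathsf{x}$ evaluated inside the simplex containing $y$. (Alternatively one may simply invoke the local contractibility of polyhedra.)

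Next, for any contractible open $U\ni\mathsf{x}$ in this family, the inclusion $j_U\colon\{\mathsf{x}\}\hookrightarrow U$ is a homotopy equivalence, its homotopy inverse being the unique map $U\to\{\mathsf{x}\}$: indeed $j_U$ composed with this map is the constant map at $\mathsf{x}$, which is homotopic to $\mathrm{id}_U$ since $U$ is contractible and path connected. Because $\mathscr{F}$ is homotopy invariant and products with $[0,1]$ (and points) remain in $\mathbf{TriTop}$, homotopic maps of triangulable spaces induce equal homomorphisms, so $\mathscr{F}$ carries homotopy equivalences to isomorphisms; thus $\mathscr{F}(j_U)\colon\mathscr{F}(U)\to\mathscr{F}(\{\mathsf{x}\})=\mathscr{F}(\mathrm{pt})$ is an isomorphism. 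For $\mathsf{x}\in V\subseteq U$ in the family, functoriality applied to $\{\mathsf{x}\}\hookrightarrow V\hookrightarrow U$ shows that these isomorphisms are compatible with the restriction $\mathscr{F}(U)\to\mathscr{F}(V)$; passing to the colimit over the cofinal family therefore yields $\mathscr{F}_{\mathsf{x}}=\mathscr{F}(\mathrm{pt})$.

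The only genuinely geometric step, and the one I expect to require the most care, is the first one: exhibiting arbitrarily small contractible open neighbourhoods of $\mathsf{x}$ that are again triangulable, so that the colimit defining the stalk can be evaluated on them. Once that local structure is granted, the rest is a formal consequence of cofinality and the homotopy invariance axiom.
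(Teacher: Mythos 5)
Your proof is correct and follows the same route as the paper's, which compresses the entire argument into the one-line observation that triangulable spaces are locally homotopy equivalent to a point and then invokes homotopy invariance. You have simply unpacked that sentence: producing a neighbourhood basis of contractible triangulable opens (open stars in iterated subdivisions), noting that the resulting inclusions of the point are homotopy equivalences inverted by $\mathscr{F}$, and passing to the colimit — all of which is implicit in the paper's appeal to local contractibility of polyhedra.
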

Proof: Since $\mathsf{X}\in \mathbf{TriTop}$ is locally homotopy equivalent to a point, the conclusion follows from the homotopy invariance of $\mathscr{F}$.\pqed

\subsection{Mildness of Suslin homology over an algebraically closed subfield of \texorpdfstring{$\mathbb{C}$}{}}

Now we are ready to show the mildness of Suslin homology groups for schemes over an algebraically closed subfield of $\mathbb{C}$, and the lifting properties (M1), (M2) for the homomorphisms.
\begin{theorem}\label{thm-mild-1}
Let $\Bbbk$ be an algebraically closed subfield of $\mathbb{C}$, $X$ a separated scheme of finite type over $\Bbbk$. Then $H_i^S(X,\mathbb{Z})$ are mild for $i\geq 0$, and
\begin{enumerate}
	\item[(i)] $s(H_{i-1}^S(X,\mathbb{Z}))+r(H_{i}^S(X,\mathbb{Z}))=r_{\mathbb{Q}_l}(H_{\et}^i(X,\mathbb{Q}_{\ell}))$, where $r_{\mathbb{Q}_{\ell}}$ means the rank of a $\mathbb{Q}_{\ell}$-vector space.
	\item[(ii)] There are canonical isomorphisms
	\[
		H_i^S(X,\mathbb{Z})_{\mathrm{tor}}\cong \bigoplus_{\ell}H^{i+1}_{\et}(X,\mathbb{Z}_{\ell})_{\mathrm{tor}}.
	\]
  \item[(iii)] Let $Y$ be a separated scheme of finite type over $\Bbbk$, $f:X\rightarrow Y$ a morphism of $\Bbbk$-schemes. Then the induced homomorphism $f_*: H_i^S(X,\mathbb{Z})\rightarrow H_i^S(Y,\mathbb{Z})$ satisfies the condition (M1).
  \item[(iv)] Let $g:X\rightarrow X$ be a $\Bbbk$-morphism. Then the induced endomorphism $g_*: H_i^S(X,\mathbb{Z})\rightarrow H_i^S(X,\mathbb{Z})$ satisfies the condition (M2).
\end{enumerate}
\end{theorem}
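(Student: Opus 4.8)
The plan is to realize $C_\bullet(X)$ via the infinite symmetric product of $X$, to compare it after analytification with the singular chains of $X(\mathbb{C})$ (a symmetric‑product version of the Dold--Thom theorem), and to feed this comparison into Propositions~\ref{prop-comparison1}, \ref{prop-comparison2} and \ref{prop-comparison3}. Concretely, set $K_\bullet:=K(C_\bullet(X))$, a complex of free abelian groups with $H_i(K_\bullet)=H_i^S(X,\mathbb{Z})$. Since $\car(\Bbbk)=0$, Proposition~\ref{prop-iso-symprod} identifies $C_i(X)$ with the group completion $\Hom(\Delta^i,\coprod_{j\geq 0}\mathrm{S}^j(X))^{+}$, so $K_\bullet\cong K(\Hom(\Delta^\bullet,\coprod_{j\geq 0}\mathrm{S}^j(X))^{+})$. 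Fixing the embedding $\Bbbk\hookrightarrow\mathbb{C}$ and restricting, after analytification, an algebraic map $\Delta^i\to\coprod_j\mathrm{S}^j(X)$ to the topological simplex $\mathsf{\Delta}^i\subset\Delta^i(\mathbb{C})$ yields a morphism of complexes of free abelian groups $\phi\colon K_\bullet\to L_\bullet:=K(\mathrm{Sin}_\bullet(\coprod_{j\geq 0}\mathrm{S}^j(X(\mathbb{C})))^{+})$. As $X(\mathbb{C})$ is a semialgebraic set, hence the underlying space of a countable simplicial complex, Corollary~\ref{cor-kan-fibration} gives $H_i(L_\bullet)\cong H_i^{\mathrm{sing}}(X(\mathbb{C}),\mathbb{Z})$, a finitely generated group, in particular mild with $s=0$.

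The key step is that $\phi\otimes\mathbb{Z}/n$ is a quasi-isomorphism for every $n$. By Corollary~\ref{cor-dual2} it is enough that $\Hom^\bullet(L_\bullet,\mathbb{Z}/n)\to\Hom^\bullet(K_\bullet,\mathbb{Z}/n)$ be a quasi-isomorphism, that is, that $\phi$ identify the singular cohomology of $X(\mathbb{C})$ with the Suslin cohomology of $X$, both with $\mathbb{Z}/n$-coefficients. This is Theorem~\ref{thm-SV8.3} over $\mathbb{C}$; I would observe that its proof works over any algebraically closed subfield $\Bbbk$ of $\mathbb{C}$, the role of $\mathbb{C}$ being confined to the formation of $X(\mathbb{C})$ and the topological row of the comparison diagram, while the quasi-isomorphisms in the algebraic row come from the Suslin--Voevodsky rigidity theorem, which is valid over any algebraically closed field. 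Equivalently, both complexes compute $H^\bullet_{\et}(X,\mathbb{Z}/n)$ --- the Suslin side by Suslin--Voevodsky, the singular side by Artin comparison together with the invariance of étale cohomology with finite coefficients under $\Bbbk\hookrightarrow\mathbb{C}$. It is essential that only the reduction mod $n$ is compared: $H_i^S(X,\mathbb{Z})$ itself is \emph{not} invariant under $\Bbbk\hookrightarrow\mathbb{C}$, since its uniquely divisible part changes.

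Granting this, Proposition~\ref{prop-comparison1} applied to $\phi$ shows that $H_i^S(X,\mathbb{Z})$ is mild for all $i$, with $s(H_{i-1}^S(X,\mathbb{Z}))+r(H_i^S(X,\mathbb{Z}))=r(H_i(L_\bullet))=\mathrm{rank}_{\mathbb{Z}}H_i^{\mathrm{sing}}(X(\mathbb{C}),\mathbb{Z})$ and with canonical isomorphisms $T(H_i^S(X,\mathbb{Z}))\cong H_i^{\mathrm{sing}}(X(\mathbb{C}),\mathbb{Z})_{\mathrm{tor}}$. Artin comparison and the invariance of $\ell$-adic cohomology under $\Bbbk\hookrightarrow\mathbb{C}$ give $H^i_{\et}(X,\mathbb{Q}_\ell)\cong H^i_{\mathrm{sing}}(X(\mathbb{C}),\mathbb{Q})\otimes\mathbb{Q}_\ell$ and $H^{i+1}_{\et}(X,\mathbb{Z}_\ell)\cong H^{i+1}_{\mathrm{sing}}(X(\mathbb{C}),\mathbb{Z})\otimes\mathbb{Z}_\ell$ (no $\varprojlim^1$, the singular cohomology being finitely generated); combined with the universal coefficient theorem on $X(\mathbb{C})$, which matches the torsion of $H_i^{\mathrm{sing}}$ with that of $H^{i+1}_{\mathrm{sing}}$ and, prime by prime, with that of $H^{i+1}_{\et}(X,\mathbb{Z}_\ell)$, this yields (i) and (ii). For (iii), a $\Bbbk$-morphism $f\colon X\to Y$ induces, by functoriality of $C_\bullet$ (via graph correspondences), of the symmetric products (Lemma~\ref{lem-sp-es}(ii)) and of analytification, a commutative square of complexes of free abelian groups whose horizontal arrows become quasi-isomorphisms after $\otimes\mathbb{Z}/n$ and whose right-hand complexes have finitely generated homology; Proposition~\ref{prop-comparison2} then gives that $f_*$ satisfies (M1). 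Taking $Y=X$, $f=g$ and using Proposition~\ref{prop-comparison3} in place of Proposition~\ref{prop-comparison2} gives (iv).

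I expect the main obstacle to be precisely the middle step. Theorem~\ref{thm-SV8.3} is formulated over $\mathbb{C}$, and integral Suslin homology genuinely changes under $\Bbbk\hookrightarrow\mathbb{C}$, so one must work with $\mathbb{Z}/n$-coefficients throughout and verify carefully that the Suslin--Voevodsky construction --- in particular the rigidity input and the Dold--Thom-type identification of the symmetric-product side (Corollary~\ref{cor-kan-fibration}) --- is genuinely functorial over $\Bbbk$ and is compatible with the map $\phi$ (and with the squares needed for (iii) and (iv)). Once this comparison is in place, everything else is a formal consequence of Propositions~\ref{prop-comparison1}--\ref{prop-comparison3} and the standard dictionary between Betti and $\ell$-adic cohomology.
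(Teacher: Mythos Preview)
Your approach is essentially the paper's: compare $K(C_\bullet(X))$ with a complex computing $H_*^{\mathrm{sing}}(X(\mathbb{C}),\mathbb{Z})$ via the symmetric-product/Dold--Thom identification, check that the comparison becomes a quasi-isomorphism mod $n$, and feed this into Propositions~\ref{prop-comparison1}--\ref{prop-comparison3}. One technical point deserves attention: you take $L_\bullet=K\big(\mathrm{Sin}_\bullet(\coprod_{j}\mathrm{S}^j(X(\mathbb{C})))^+\big)$ and call it a complex of \emph{free} abelian groups without justification; its terms are visibly torsion-free (pointwise cancellation in the free monoid of $0$-cycles), hence flat, but freeness is what Propositions~\ref{prop-comparison1}--\ref{prop-comparison3} and Corollary~\ref{cor-dual2} literally require. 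The paper sidesteps this by taking instead $L_\bullet=K\big(\mathbb{Z}(\mathrm{Sin}_\bullet(X(\mathbb{C})))\big)$, the ordinary singular chain complex (manifestly free), and constructing the comparison map $\Phi$ as a \emph{lift} of your $\phi$ through the Dold--Thom quasi-isomorphism $K\big(\mathbb{Z}(\mathrm{Sin}_\bullet(X(\mathbb{C})))\big)\to K\big(\mathrm{Sin}_\bullet(\coprod_j\mathrm{S}^j(X(\mathbb{C})))^+\big)$, which exists because $K_\bullet$ is a bounded-below complex of projectives.

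For $\Bbbk\subsetneq\mathbb{C}$ the paper does not argue that the proof of Theorem~\ref{thm-SV8.3} extends to $\Bbbk$; rather it first treats $\Bbbk=\mathbb{C}$ and then interposes the base-change map $\Psi:K(C_\bullet(X))\to K(C_\bullet(X\times_\Bbbk\mathbb{C}))$, showing $\Psi\otimes\mathbb{Z}/n$ is a quasi-isomorphism by precisely your ``equivalently'' argument (Suslin--Voevodsky comparison with \'etale cohomology on both ends, plus invariance of \'etale cohomology with finite coefficients under the extension). Your $\phi$ is then the composite $\Phi\circ\Psi$, so the two packagings are equivalent.
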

Proof:
When $\Bbbk=\mathbb{C}$, there is a natural embedding of cosimplicial spaces $\Delta_\mathrm{top}^\bullet\hookrightarrow \Delta^\bullet(\mathbb{C})$, which induces the bottom arrow in the following diagram
\begin{eqnarray}\label{graph1}
\xymatrix{
 K(C_\bullet(X)) \ar@{=}[d] \ar@{.>}[r]^{\Phi} & K(\mathbb{Z}(\mathrm{Sin}_{\bullet}(X(\mathbb{C}))) \ar[d] \\
K(\Hom(\Delta_\bullet, \coprod_{j=0}^{\infty}\mathrm{S}^j(X))^+) \ar[r] & K(\mathrm{Sin}_\bullet(\coprod_{i=0}^\infty \mathrm{S}^i (X(\mathbb{C})))^+).
}
\end{eqnarray}
The left identification is induced by (\ref{eq-sv96.6.8}). The right arrow is a free resolution by Dold-Thom.  The complexes $K(C_\bullet(X))$ and $K(\mathbb{Z}(\mathrm{Sin}_{\bullet}(X(\mathbb{C})))$ are complexes of free abelian groups, therefore there exists a lifting making the diagram commutative. We choose such a lifting and denote it by $\Phi$.
Apply proposition \ref{prop-comparison1} to $\Phi$. By theorem \ref{thm-SV8.3} $\Phi\otimes \mathbb{Z}/n$ are isomorphisms for all $n\in \mathbb{Z}$, hence we obtain the mildness of $H_i^S(X,\mathbb{Z})$ for $\Bbbk=\mathbb{C}$. 

For a subfield $\Bbbk$  of $\mathbb{C}$, 
there is a natural homomorphism $\Psi: K(C_\bullet(X))\rightarrow K(C_\bullet(X\times_{\Bbbk}\mathbb{C}))$. If  $\Bbbk$ is  algebraically closed, by \cite[theorem 7.8]{SV96}, there is a commutative diagram
\[
\xymatrix{
	H^i_S(X,\mathbb{Z}/n) \ar[d]^{\cong} \ar[r] & H^i_S(X\times_{\Bbbk}\mathbb{C},\mathbb{Z}/n) \ar[d]^{\cong}\\
	H^i_{\et}(X,\mathbb{Z}/n)  \ar[r] & H^i_S(X\times_{\Bbbk}\mathbb{C},\mathbb{Z}/n) 
}
\]
where both vertical arrows are isomorphisms. Since the bottom arrow is also an isomorphism by the smooth base change theorem, so is the top arrow. Then by corollary \ref{cor-dual2}, $\Psi\otimes \mathbb{Z}/n$ are isomorphisms for all $n\in \mathbb{Z}$. Now applying proposition \ref{prop-comparison1} to  the composition
\[
K(C_\bullet(X))\xrightarrow{\Psi} K(C_\bullet(X\times_{\Bbbk}\mathbb{C}))\xrightarrow{\Phi}K(\mathbb{Z}(\mathrm{Sin}_{\bullet}(X(\mathbb{C})))
\]
we obtain the mildness of $H_i^S(X,\mathbb{Z})$. Part (ii) follows from the universal coefficient theorem (\ref{eq-comparison1-2}) and the comparison theorem on étale cohomology and singular cohomology \cite[XVI, 4.1]{SGA4}. Parts (iii) and (iv) follow from proposition \ref{prop-comparison2} and \ref{prop-comparison3}.
\pqed

\section{Application of a rigidity theorem}
In theorem \ref{thm-mild-1} we assume that the base field can be imbedded into $\mathbb{C}$. In this section we are going to remove this restriction, using the Lefschetz principle. For this we need to know how the Suslin homology changes under an algebraically closed extension of the base field. A theorem of this type for Gillet's Chow groups appeared in \cite{Lec86}, and Jannsen \cite{Jan15} put it into a general framework. We begin by recalling a rigidity theorem of Jannsen.

\subsection{A rigidity theorem of Jannsen}
Let $\Bbbk$ be a field. Denote by $\mathbf{S}_{\Bbbk}$ the category such that
\begin{multline*}
\mathrm{ob}(\mathbf{S}_{\Bbbk})=\{X: X\ \mbox{is a}\ \Bbbk\mbox{-scheme and there exists a field extension}\ K\ \mbox{of}\ \Bbbk\\
 \mbox{such that}\ X\ \mbox{is isomorphic to a}\ K\mbox{-scheme of finite type}
\},
\end{multline*}
and for $X,Y\in \mathrm{ob}(\mathcal{S}_{\Bbbk})$,
\[
\Hom_{\mathbf{S}_{\Bbbk}}(X,Y)=\mbox{the set of morphisms from}\ X\ \mbox{to}\ Y\ \mbox{as}\ \Bbbk\mbox{-schemes}.
\]
 Note that fiber products in $\mathbf{S}_{\Bbbk}$ do not always exist, but a morphism $f:X\rightarrow Y$ in $\mathbf{S}_{\Bbbk}$ of finite type is \emph{quarrable}, i.e. the fiber product $X\times_Y Z$ exists in $\mathbf{S}_{\Bbbk}$ for any morphism $Z\rightarrow Y$ in $\mathbf{S}_{\Bbbk}$, and in fact it coincides with their fiber product in the category of all schemes.

 We recall the definition of rigid functors on $\mathbf{S}_{\Bbbk}$ introduced in \cite[definition 1.1]{Jan15}, with some slight modifications.
\begin{definition}\label{def-rigidfunctor}
A contravariant functor $V$ on $\mathbf{S}_{\Bbbk}$ with values in the category of abelian groups is called \emph{rigid} if it satisfies the following axioms:
\begin{enumerate}
	\item[(a)] For any finite flat morphism $\pi:X\rightarrow Y$ in $\mathbf{S}_{\Bbbk}$ there is a transfer morphism $\pi_*:V(X)\rightarrow V(Y)$ such that for another finite flat morphism $\rho:Y\rightarrow Z$, $(\rho\circ \pi)_*=\rho_*\circ \pi_*$. 
	\item[(b)] For a cartesian diagram of schemes in $\mathbf{S}_{\Bbbk}$
	\[
	\xymatrix{
	X' \ar[r]^{f'} \ar[d]_{\pi'} & X \ar[d]^{\pi} \\
	Y' \ar[r]^{f} & Y
	}
	\]
	with $\pi$ finite and flat, one has $f^*\pi_*=\pi'_{*}f'^{*}:V(X)\rightarrow V(Y')$.
	\item[(c)] If $X=X_1\coprod X_2$, the immersions $\pi_i:X_i\hookrightarrow X$ for $i=1,2$ induce an isomorphism
	\[
	(\pi_1^*,\pi_2^*):V(X)\xrightarrow{\sim} V(X_1)\oplus V(X_2)
	\]
	with inverse $\pi_{1*}+\pi_{2*}$.
	\item[(d)] If $X_m=X\times_{\mathbb{Z}}\Spec(\mathbb{Z}[T]/(T^m))$, then for the morphism $\pi:X_m\rightarrow X$ one has $\pi^*\pi_*=$ multiplication by $m$.
	\item[(e)] The projection $p:\mathbb{A}_X^1\rightarrow X$ induces an isomorphism $p^*:V(X)\xrightarrow{\sim} V(\mathbb{A}^1_X)$.
	\item[(f)] For a filtered projective system of \emph{regular} schemes $(X_i)_{i\in I}$ in $\mathbf{S}_{\Bbbk}$ with affine transition morphisms $X_i\rightarrow X_j$, the canonical map
	\begin{equation}\label{eq-axiom-f}
	\varinjlim_{i\in I} V(X_i)\rightarrow V(\varprojlim_{i\in I} X_i)
	\end{equation}
	is an isomorphism, provided that $\varprojlim_{i\in I} X_i$ exists in $\mathbf{S}_{\Bbbk}$ and is a regular scheme.
\end{enumerate}
\end{definition}

\begin{theorem}\label{thm-Jannsen}
Suppose $\Bbbk$ is an algebraically closed field. Let $K$ be an algebraically closed field containing $\Bbbk$, and $n$ an  integer. Let $V$ be a rigid functor on $\mathbf{S}_{\Bbbk}$. 
\begin{enumerate}
  \item[(i)] The canonical maps
\[
V(X)[n]\rightarrow V(X_K)[n], \quad V(X)/n\rightarrow V(X_K)/n
\]
are isomorphisms.
  \item[(ii)] Let $X$ be a scheme separated of finite type over $\Bbbk$. Then the canonical map $V(X)\rightarrow V(X_K)$ sends torsion free elements to torsion free elements.
\end{enumerate}
\end{theorem}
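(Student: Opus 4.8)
The plan is to run the classical rigidity argument (Suslin, Voevodsky \cite{SV96}; Jannsen \cite{Jan15}) inside the given axiomatic framework. Writing $K$ as the filtered union of its algebraically closed subextensions of finite transcendence degree over $\Bbbk$, and each of those as a finite tower of algebraically closed extensions of transcendence degree one, axiom (f) --- spectra of fields being regular, and $(-)[n]$, $(-)/n$ commuting with filtered colimits --- reduces (i) to the case $K=\overline{\Bbbk(C)}$ for a smooth projective curve $C$ over $\Bbbk$ (induction on transcendence degree, the inductive step applying the curve case over an algebraically closed base $K_{0}$ with $X_{K_{0}}$ in the role of $X$). Fix a point $c_{0}\in C(\Bbbk)$, which exists since $\Bbbk$ is algebraically closed; let $O$ be the henselization of $\mathcal{O}_{C,c_{0}}$, a henselian discrete valuation ring with residue field $\Bbbk$; and let $\sigma\colon X\hookrightarrow X\times_{\Bbbk}\Spec O$ be the section through the closed point, $\pi\colon X\times_{\Bbbk}\Spec O\to X$ the projection, so that $\pi\sigma=\mathrm{id}$ and $\pi^{*}$ is split injective with retraction the closed-fibre restriction $\sigma^{*}$.

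The heart of the proof is the rigidity lemma: $\sigma^{*}\colon V(X\times_{\Bbbk}\Spec O)/n\to V(X)/n$ is an isomorphism, and likewise on $n$-torsion. This is Suslin's argument carried out with the axioms: for a class killed by $\sigma^{*}/n$, whose closed fibre is thus trivial, the difference between its two fibres over $\Spec O$ is written, using the transfer and base-change axioms (a),(b), as the transfer of a degree-$n$ zero-cycle on $\mathbb{A}^{1}_{O}$ finite over $\Spec O$; the thickening axiom (d) makes the relevant infinitesimal neighbourhoods act by multiplication by their lengths, and homotopy invariance (e) together with additivity (c) then forces the class to vanish modulo $n$. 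Granting this, $\pi^{*}$ is an isomorphism modulo $n$ and on $n$-torsion, and the remaining reduction from $V(X)\cong_{n}V(X\times_{\Bbbk}\Spec O)$ to $V(X)\cong_{n}V(X_{K})$ --- passing to the generic point of $\Spec O$ and then along the algebraic extension $K/\Bbbk(C)$, at each stage a henselian local ring with residue field $\Bbbk$ --- is the standard limit-and-continuity manipulation of \cite{Jan15}; this gives (i) for $K=\overline{\Bbbk(C)}$, hence in general.

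For (ii): applying (i) for all $n$ forces the kernel $A$ and the cokernel of $V(X)\to V(X_{K})$ to be torsion free and divisible, hence $\mathbb{Q}$-vector spaces; in particular $V(X)_{\mathrm{tor}}\xrightarrow{\sim}V(X_{K})_{\mathrm{tor}}$. Given $\xi\in V(X)$ torsion free with $\xi_{K}$ torsion, subtract the unique $V(X)_{\mathrm{tor}}$-preimage of $\xi_{K}$ --- which leaves a torsion free element --- to reduce to $\xi_{K}=0$, i.e. $0\neq\xi\in A$. Here the finite-type hypothesis is used: by continuity $\xi$ already maps to $0$ in $V(X\times_{\Bbbk}U)$ for some integral $\Bbbk$-scheme $U$ of finite type (a model of a finitely generated subextension of $K/\Bbbk$ through which $\xi$ dies), the scheme $U$ has a $\Bbbk$-point $u_{0}$ since $\Bbbk$ is algebraically closed, and the section of $X\times_{\Bbbk}U\to X$ at $u_{0}$ exhibits $V(X)$ as a retract of $V(X\times_{\Bbbk}U)$; hence $\xi=0$, a contradiction. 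So $A=0$ and (ii) follows.

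The step I expect to be the real obstacle is the rigidity lemma of the second paragraph: getting the transfer axioms (a),(b), the thickening axiom (d), and homotopy invariance (e) to interlock so as to carry out Suslin's zero-cycle computation in this purely functor-theoretic form --- essentially the content of \cite{Jan15}, reworked for the modified axiom list. The other delicacy, pervasive in the reductions, is ensuring that every scheme produced along the way ($X\times_{\Bbbk}\Spec O$, the étale neighbourhoods whose limit is $\Spec O$, the models $X\times_{\Bbbk}U$) is an object of $\mathbf{S}_{\Bbbk}$ and is regular wherever axiom (f) is applied, since the continuity axiom here is restricted to regular schemes --- which may force an initial reduction to the case that $X$ is regular.
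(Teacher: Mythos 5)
For part (i) the paper gives no proof at all: it simply declares (i) a special case of \cite[theorem 1.2]{Jan15}, which is why that theorem and its proof were read carefully enough to motivate the modified axiom list of Definition~\ref{def-rigidfunctor}. Your second paragraph is a plausible sketch of what is inside Jannsen's citation --- the reduction to transcendence degree one, the henselization at a $\Bbbk$-point, and Suslin's zero-cycle computation carried out with the transfer, base-change, thickening and homotopy axioms --- but, as you yourself flag, the rigidity lemma is exactly the content being black-boxed, and your concern about keeping all intermediate schemes regular so that axiom (f) applies is the genuine delicacy that the paper's remarks (iii) on Definition~\ref{def-rigidfunctor} address. For part (ii) the paper's argument is shorter and does not use (i) at all: take $\alpha$ torsion free with $n\alpha_K=0$; by axiom (f) there is a finite-type $Y/\Bbbk$ with $n\alpha$ already zero in $V(X\times_\Bbbk Y)$; a $\Bbbk$-point of $Y$ gives a section of $X\times_\Bbbk Y\to X$, so $n\alpha=0$, contradiction. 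Your version first invokes (i) to make $\Ker$ and $\Coker$ of $V(X)\to V(X_K)$ into $\Q$-vector spaces, deduces $V(X)_{\mathrm{tor}}\xrightarrow{\sim}V(X_K)_{\mathrm{tor}}$, subtracts the torsion preimage to reduce to an element of the kernel, and only then runs the same continuity-plus-section argument. That is correct, but the preliminary reduction is unnecessary: spreading out the relation $n\alpha=0$ directly, as the paper does, already yields the contradiction. So the core mechanism (axiom (f) plus a $\Bbbk$-rational section) coincides; yours just reaches it by a longer route, at the cost of making (ii) depend on (i).
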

Proof: (i) is a special case of \cite[theorem 1.2]{Jan15}. For (ii), let $X$ be a scheme separated  of finite type over $L$, where $L$ is an extension field of $\Bbbk$, and let $\alpha$ be a torsion free element of $V(X)$. Suppose $\alpha$ maps to an $n$-torsion element in $V(X_K)$. By the axiom (f), there exists a scheme $Y$ separated of finite type over $\Bbbk$, such that $\alpha$ maps to an $n$-torsion element in $V(X\times_{\Bbbk}Y)$. But $Y$ has a $\Bbbk$-point, so $X\times_{\Bbbk}Y\rightarrow X$ has a section, which implies that $\alpha$ is $n$-torsion, a contradiction.\pqed

\begin{remark}
\begin{enumerate}
	\item[(i)] In \cite[definition 1.1]{Jan15} a rigid functor is defined on a category of schemes with flat morphisms. But the statement of \cite[theorem 1.2]{Jan15} and its proof need also at least pullbacks by regular embeddings of regular points, and the axiom (b) is applied in the case where $f$ and $f'$ are this sort of morphisms. So I believe a correct definition of a rigid functor is on a category admitting these morphisms, such as $\mathbf{S}_{\Bbbk}$.
	\item[(ii)] In the axiom (f) we assume that (\ref{eq-axiom-f}) holds only when the projective limit on the right-handside exists, i.e., we do not assume that this limit always exists. This is sufficient for the proof of \cite[theorem 1.2]{Jan15}. In fact, in that proof, this limit is always taken to be a field. 
	\item[(iii)] In the axiom (f) we assume that $X_i$ are regular for all $i\in I$. This is satisfied in  the proof of \cite[theorem 1.2]{Jan15} where axiom (f) is used.
	\item[(iv)] The assumption in \cite[theorem 1.2]{Jan15} that $V(Y)$ are $n$-torsion groups is redundant.
\end{enumerate}
\end{remark}

\subsection{Rigidity and mildness of Suslin homology}

Let $\Bbbk$ be a field. We will define a naive bivariant cohomology theory on $\mathbf{S}_{\Bbbk}$, for $Y\in \mathbf{S}_{\Bbbk}$, $X$ a scheme separated of finite type over $\Bbbk$:
\[
B_{r,i}(Y,X)=H^{-i}\big(\underline{C}_\bullet\big(c_{\mathrm{equi}}(X,r)\big)(Y)\big),
\]
where
\[
\underline{C}_i\big(c_{\mathrm{equi}}(X,r)\big)(Y)=c_{\mathrm{equi}}(X\times_{\Bbbk}Y\times_{\Bbbk}\Delta^{i}_{\Bbbk}/Y\times_{\Bbbk}\Delta^{i}_{\Bbbk},r).
\]

The following lemma follows immediately from the definition.
\begin{lemma}
Let $\Bbbk\hookrightarrow K$ be a field extension, and $\pi:Y\rightarrow \Spec(K)$ a $\Bbbk$-morphism of finite type. Then
 	\[
	B_{r,i}(Y,X)\cong H^{-i}\big(\underline{C}_\bullet
	\big(c_{\mathrm{equi}}(X_K,r)(Y)\big)\big),
	\]
  where
  \[
\underline{C}_i\big(c_{\mathrm{equi}}(X_K,r)\big)(Y)=c_{\mathrm{equi}}(X_K\times_{K}Y\times_{K}\Delta^{i}_{K}/Y\times_{K}\Delta^{i}_{K},r).
\]
	 In particular, for any scheme $X$ of finite type over $\Bbbk$,
\begin{equation}\label{eq-biv-suslin}
B_{0,i}(\Spec(K),X)\cong H_{i}^S(X_K,\mathbb{Z})
\end{equation}
where the right-handside is the Suslin homology of the $K$-scheme $X_K$.\pqed
\end{lemma}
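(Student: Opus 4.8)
The plan is to unwind the definition of $B_{r,i}(Y,X)$ in the special case $Y \to \Spec(K)$, using the basic compatibility of the presheaf $c_{\mathrm{equi}}(-,r)$ with base change along the field extension $\Bbbk \hookrightarrow K$. The key point is that relative zero-cycles (more generally, relative equidimensional cycles of dimension $r$) on a $\Bbbk$-scheme, when restricted to $K$-schemes, only see the $K$-structure: if $Y$ is a $K$-scheme of finite type, then for any $\Bbbk$-scheme $X$ of finite type one has a canonical identification
\[
c_{\mathrm{equi}}(X\times_{\Bbbk}Y/Y, r) \cong c_{\mathrm{equi}}(X_K\times_{K}Y/Y, r),
\]
because $X\times_{\Bbbk} Y = X\times_{\Bbbk}\Spec(K)\times_{\Spec(K)} Y = X_K \times_K Y$ as schemes over $Y$ (here I use $\Spec(K)\to\Spec(\Bbbk)$ to factor the structure map of $Y$). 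This is the content underlying \cite[section 3]{SV00}, and it is compatible with the simplicial structure maps, since $Y\times_{\Bbbk}\Delta^i_{\Bbbk} = Y\times_K \Delta^i_K$ over $Y$ as well (using $\Delta^i_K = \Delta^i_{\Bbbk}\times_{\Bbbk} K$).

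First I would spell out, for each $i$, the identification of the $\Bbbk$-relative cycle group with the $K$-relative cycle group:
\[
\underline{C}_i\big(c_{\mathrm{equi}}(X,r)\big)(Y) = c_{\mathrm{equi}}(X\times_{\Bbbk}Y\times_{\Bbbk}\Delta^i_{\Bbbk}/Y\times_{\Bbbk}\Delta^i_{\Bbbk}, r) \cong c_{\mathrm{equi}}(X_K\times_K Y\times_K \Delta^i_K/Y\times_K \Delta^i_K, r),
\]
where the right-hand side is exactly $\underline{C}_i\big(c_{\mathrm{equi}}(X_K,r)\big)(Y)$ computed over the base $K$. Then I would check these isomorphisms commute with the face and degeneracy maps, hence assemble into an isomorphism of complexes, which upon taking $H^{-i}$ gives the first displayed assertion of the lemma. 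Finally, for the special case, I take $Y = \Spec(K)$ itself with $\pi = \id$, and $r = 0$: then $\underline{C}_i\big(c_{\mathrm{equi}}(X_K,0)\big)(\Spec K) = c_{\mathrm{equi}}(X_K\times_K \Delta^i_K/\Delta^i_K, 0)$. By proposition \ref{prop-presheaves-cycles}(ii), for the regular scheme $\Delta^i_K$ this agrees with $z_0^c(X_K)(\Delta^i_K) = C_i(X_K)$, so the complex $\underline{C}_\bullet\big(c_{\mathrm{equi}}(X_K,0)\big)(\Spec K)$ is (up to reindexing) the Suslin complex $C_\bullet(X_K)$, and $H^{-i}$ of it is $H_i^S(X_K,\mathbb{Z})$ by definition \ref{def-Suslinhomology}.

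The only genuinely substantive point is the base-change identification of $c_{\mathrm{equi}}$ itself, i.e. that the presheaf of relative cycles is insensitive to whether we regard the structure morphism as living over $\Bbbk$ or over $K$; this is a formal consequence of the fact that the relevant fiber products of schemes coincide, together with the compatibility of the cycle-theoretic constructions of \cite{SV00} with composition of structure morphisms. I expect this to be essentially bookkeeping rather than a real obstacle, which is why the lemma is stated as following immediately from the definition; the main care needed is simply to ensure that all the base changes (of $X$, of $\Delta^i$, and of $Y$) are performed consistently over the correct base and that the resulting identifications are simplicial.
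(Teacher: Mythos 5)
Your unwinding is correct and is exactly what the paper has in mind when it asserts the lemma "follows immediately from the definition": the base-change identifications $X\times_{\Bbbk}Y\times_{\Bbbk}\Delta^i_{\Bbbk}\cong X_K\times_K Y\times_K\Delta^i_K$ over $Y\times_{\Bbbk}\Delta^i_{\Bbbk}\cong Y\times_K\Delta^i_K$ are compatible with the cosimplicial structure, and the group of relative cycles depends only on the morphism to the base. Your appeal to Proposition \ref{prop-presheaves-cycles}(ii) to identify $c_{\mathrm{equi}}(X_K,0)(\Delta^i_K)$ with $z_0^c(X_K)(\Delta^i_K)=C_i(X_K)$ in the special case is also the intended step, since the paper's Definition \ref{def-Suslinhomology} is phrased via $C_\bullet$, not via $c_{\mathrm{equi}}$.
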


\begin{proposition}\label{prop-rigid}
Let $\Bbbk$ be an algebraically closed field of characteristic 0.
The groups $B_{r,i}(Y,X)$ are contravariant  functorial in $Y$,  covariant functorial in $X$, and contravariant functorial in $X$ with respect to proper flat equidimensional morphisms. Moreover, for any $X$ of finite type over $\Bbbk$, $B_{0,i}(\ \cdot\ ,X)$ is a rigid functor on $\mathbf{S}_{\Bbbk}$.
\end{proposition}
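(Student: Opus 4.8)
The plan is to verify the six axioms (a)--(f) of Definition \ref{def-rigidfunctor} for the functor $V(\cdot) = B_{0,i}(\cdot, X)$ on $\mathbf{S}_{\Bbbk}$, since these are exactly the conditions defining a rigid functor. The functoriality statements in the first sentence of the proposition are more or less formal: contravariance in $Y$ comes from precomposing cycles with $Y'\times_{\Bbbk}\Delta^j\to Y\times_{\Bbbk}\Delta^j$ (pullback of relative cycles along the base change, which is defined because $c_{\mathrm{equi}}(X,r)$ is a presheaf on all of $\mathbf{Sch}/\Bbbk$); covariance in $X$ comes from pushforward of cycles along $X\to X'$ (proper, or here we only need it for morphisms of finite type since relative $0$-cycles push forward); contravariance in $X$ along proper flat equidimensional $X'\to X$ is the flat pullback of relative cycles. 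I would dispatch all of these quickly by citing the functoriality properties of $c_{\mathrm{equi}}$ established in \cite[section 3]{SV00}, and noting that forming $\underline{C}_\bullet$ and then cohomology is functorial.

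For the rigidity axioms, the key input is that $B_{0,i}$ is built from the presheaf $c_{\mathrm{equi}}(X,0)$, which by Proposition \ref{prop-presheaves-cycles}(ii) agrees with $z_0^c(X)$ on regular schemes, and $z_0^c(X)$ is a qfh-sheaf on $\mathbf{Nor}_{\mathrm{ft}}/\Bbbk$. Axiom (c) (disjoint unions) is immediate from additivity of cycle groups. Axiom (e) (homotopy invariance, $p^\ast: V(X)\xrightarrow{\sim} V(\mathbb{A}^1_X)$) is the homotopy invariance of Suslin homology of the bivariant theory, which follows from the standard simplicial-homotopy argument applied to the cycle complex $\underline{C}_\bullet(c_{\mathrm{equi}}(X,r))$; this is in \cite{SV00}. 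Axiom (f) (continuity: $\varinjlim V(X_i)\xrightarrow{\sim} V(\varprojlim X_i)$ for filtered systems of regular schemes with affine transition maps) follows because relative cycles on $X\times_{\Bbbk}Y\times_{\Bbbk}\Delta^j$ finite and equidimensional over $Y\times_{\Bbbk}\Delta^j$ are defined over a finitely generated subring, so $c_{\mathrm{equi}}$ commutes with such cofiltered limits, and cohomology commutes with filtered colimits.

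The main obstacle will be the transfer/trace axioms (a), (b), (d). For a finite flat morphism $\pi: Y'\to Y$ in $\mathbf{S}_{\Bbbk}$ I need a transfer $\pi_\ast: V(Y')\to V(Y)$, i.e. on the level of the complexes $\underline{C}_\bullet(c_{\mathrm{equi}}(X,0))$ evaluated at $Y'$ versus $Y$. The construction is: given a relative $0$-cycle $Z$ on $X\times_{\Bbbk} Y'\times_{\Bbbk}\Delta^j$ finite over $Y'\times_{\Bbbk}\Delta^j$, push it forward along the finite flat map $X\times_{\Bbbk}Y'\times_{\Bbbk}\Delta^j \to X\times_{\Bbbk}Y\times_{\Bbbk}\Delta^j$ to get a relative cycle over $Y\times_{\Bbbk}\Delta^j$; one must check this preserves the ``equidimensional/finite over the base'' condition and is compatible with the faces, hence descends to a map of complexes. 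Functoriality of transfers $(\rho\pi)_\ast = \rho_\ast\pi_\ast$ (axiom (a)) and the base-change compatibility $f^\ast\pi_\ast = \pi'_\ast f'^\ast$ (axiom (b)) then reduce to the corresponding properties of proper pushforward of relative cycles, which are established in \cite[section 3]{SV00} (proper base change for the presheaf $c_{\mathrm{equi}}$). Axiom (d) is the computation $\pi^\ast\pi_\ast = \times m$ for $\pi: X_m\to X$ with $X_m = X\times_{\mathbb{Z}}\Spec(\mathbb{Z}[T]/(T^m))$: here $\pi$ is finite flat of degree $m$, and $\pi^\ast\pi_\ast$ applied to a cycle returns $m$ times the cycle because the scheme-theoretic fiber has length $m$ everywhere — this is a local length computation on the cycle groups. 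I would present (a)--(d) together as ``the standard formalism of proper pushforward and flat pullback for the presheaves $c_{\mathrm{equi}}(X,r)$ of \cite{SV00} induces transfers on $B_{r,i}$ satisfying the projection and base-change formulas,'' and isolate axiom (d) as the one genuine computation, done by checking it on generators of the cycle group over an affine regular base.
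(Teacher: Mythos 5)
Your proposal is correct and covers the same six axioms (a)--(f) of Definition \ref{def-rigidfunctor}, but it differs from the paper at two junctures. For axioms (a)--(d) the paper does not re-derive the transfer structure from the ground up: it simply invokes Kelly's result \cite[Prop.~2.5.8]{Kel13} that $c_{\mathrm{equi}}(X,0)(-)$ is a \emph{presheaf with traces} in the sense of \cite[Def.~3.3.1]{Kel13}, which packages precisely the transfer, composition, base-change, and degree-$m$ axioms in one stroke. Your hand-built version via proper pushforward of relative cycles from \cite[\S 3]{SV00} is a legitimate substitute, though one would then have to take a bit of care with axiom (d), since $X_m=X\times_{\Z}\Spec(\Z[T]/(T^m))$ is non-reduced and relative-cycle pullback along non-reduced bases is exactly the technical point that the presheaf-with-traces formalism was designed to handle; you rightly flag this as the one genuine computation, but it deserves a citation rather than a ``length computation'' gesture. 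For axiom (e), the correct reference is \cite[Lemma 4.1]{FV00}, not \cite{SV00}; a minor slip.

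The more substantive divergence is in axiom (f). You argue directly that relative cycles over $Y=\varprojlim Y_i$ are defined over a finite stage by spreading out. This is correct in spirit but sketchy: the category of relative cycles is more delicate than cycles with closed support (one must know that the relative-cycle condition and the cycle-theoretic pullback maps descend along the filtered system), and you don't give a reference for a continuity statement about $c_{\mathrm{equi}}$. The paper avoids this by a different route: it uses Propositions \ref{prop-presheaves-cycles}(ii) and \ref{prop-iso-symprod} to identify the effective part of $\underline{C}_n(c_{\mathrm{equi}}(X,0))(Y_i)$ with $\mathrm{Hom}(Y_i\times\Delta^n,\coprod_{j\ge 0}\mathrm{S}^j(X))$, and then reduces, by noetherianity of $Y$, to morphisms into the finite-type algebraic space $\coprod_{j\le m}\mathrm{S}^j(X)$, where continuity under filtered limits with affine transition maps is standard. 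That reformulation turns a delicate cycle-theoretic continuity claim into a textbook statement about schemes, which is why the paper takes that detour. Your route would also work if you invoked the appropriate continuity result for $z_0^c$ on regular bases, but as written the gap between ``supports are defined over a finite stage'' and ``the relative-cycle presheaf commutes with the limit'' is left for the reader.
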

Proof: By \cite[proposition 2.5.8]{Kel13}, $c_{\mathrm{equi}}(X,0)(-)$ is a \emph{presheaf with traces} (\cite[definition 3.3.1]{Kel13}). Thus $B_{0,i}$ satisfies the  properties (a)-(d) of definition \ref{def-rigidfunctor}. Properties (e) follows from \cite[lemma 4.1]{FV00}. To show the property (f), we need only to show  that for a filtered projective system of regular schemes $(Y_i)_{i\in I}$ in $\mathbf{S}_{\Bbbk}$ with affine transition morphisms $Y_i\rightarrow Y_j$ such that $Y=\varprojlim_{i\in I}Y_i$ is a regular scheme in $\mathbf{S}_{\Bbbk}$, we have isomorphisms for $n\geq 0$:
\begin{equation}\label{eq-prop-rigid-1}
\varinjlim_{i\in I}\underline{C}_n\big(c_{\mathrm{equi}}(X,0)\big)(Y_i)\xrightarrow{\sim}
\underline{C}_n\big(c_{\mathrm{equi}}(X,0)\big)(Y).
\end{equation}
By proposition \ref{prop-presheaves-cycles} (ii) and proposition \ref{prop-iso-symprod}, we have
\[
\underline{C}_n^{\mathrm{eff}}\big(c_{\mathrm{equi}}(X,0)\big)(Y_i)
\cong \mathrm{Hom}(Y_i\times_{\Bbbk}\Delta^n_{\Bbbk},\coprod_{j=0}^{\infty}\mathrm{S}^j(X)),
\]
and
\[
\underline{C}_n^{\mathrm{eff}}\big(c_{\mathrm{equi}}(X,0)\big)(Y)
\cong \mathrm{Hom}(Y\times_{\Bbbk}\Delta^n_{\Bbbk},\coprod_{j=0}^{\infty}\mathrm{S}^j(X)).
\]
Since $Y$ is noetherian, an element of $\mathrm{Hom}(Y\times_{\Bbbk}\Delta^n_{\Bbbk},\coprod_{j=0}^{\infty}\mathrm{S}^j(X))$ in fact  lies in $\mathrm{Hom}(Y\times_{\Bbbk}\Delta^n_{\Bbbk},\coprod_{j=0}^{m}\mathrm{S}^j(X))$ for some $m>0$. But $\coprod_{j=0}^{m}\mathrm{S}^j(X)$ is an algebraic space of finite type over $\Bbbk$, so 
\[
\varinjlim_{i\in I}\mathrm{Hom}(Y_i\times_{\Bbbk}\Delta^n_{\Bbbk},\coprod_{j=0}^{m}\mathrm{S}^j(X))\cong \mathrm{Hom}(Y\times_{\Bbbk}\Delta^n_{\Bbbk},\coprod_{j=0}^{m}\mathrm{S}^j(X)),
\]
and (\ref{eq-prop-rigid-1}) follows.
 \pqed

So  we can apply theorem \ref{thm-Jannsen} to the functor $B_{0,i}(-,X)$, and by (\ref{eq-biv-suslin}) we obtain:
\begin{corollary}
Let $K$ and $L$ be algebraically closed fields of characteristic 0, and $\iota:K\rightarrow L$ an imbedding. Let $X$ be scheme separated and  of finite type over $K$. Then the natural map $\varphi: H_i^S(X,\mathbb{Z})\rightarrow H_i^S(X_L,\mathbb{Z})$ induces isomorphisms
\[
H_i^S(X,\mathbb{Z})[n]\rightarrow H_i^S(X_L,\mathbb{Z})[n],\ 
H_i^S(X,\mathbb{Z})/n\rightarrow H_i^S(X_L,\mathbb{Z})/n,
\]
and for any torsion free element $\alpha\in H_i^S(X,\mathbb{Z})$, $\varphi(\alpha)$ is torsion free.\pqed
\end{corollary}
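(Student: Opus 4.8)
The plan is to recognize these Suslin homology groups as special values of the bivariant theory $B_{0,i}(-,X)$ and then feed the situation into Jannsen's rigidity theorem (Theorem~\ref{thm-Jannsen}). Concretely, I would take the base field to be $K$ itself: then, for the fixed $K$-scheme $X$ separated of finite type, Proposition~\ref{prop-rigid} tells us that the contravariant functor $Y\mapsto B_{0,i}(Y,X)$ on $\mathbf{S}_K$ is \emph{rigid} in the sense of Definition~\ref{def-rigidfunctor}. (This is where the substance lies: axioms (a)--(d) come from $c_{\mathrm{equi}}(X,0)$ being a presheaf with traces, (e) is homotopy invariance, and (f) is the commutation with filtered limits, which uses that any given morphism $Y\times_K\Delta^n_K\to\coprod_{j\ge 0}\mathrm{S}^j(X)$ factors through the finite-type algebraic space $\coprod_{j\le m}\mathrm{S}^j(X)$, via Propositions~\ref{prop-presheaves-cycles} and~\ref{prop-iso-symprod}.)

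Next I would apply Theorem~\ref{thm-Jannsen} to the rigid functor $V=B_{0,i}(-,X)$ on $\mathbf{S}_K$, with the algebraically closed extension $L\supseteq K$ furnished by $\iota$, evaluating everything at the object $\Spec(K)\in\mathbf{S}_K$. Here one simply observes that $\Spec(K)_L=\Spec(K)\times_K\Spec(L)=\Spec(L)$, and that $\Spec(K)$ is (trivially) separated of finite type over $K$, so that both part (i) and part (ii) of the theorem are available.

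It then remains to translate the abstract conclusion back into Suslin homology. The identification~(\ref{eq-biv-suslin}), applied to the identity extension $K=K$, gives $B_{0,i}(\Spec(K),X)\cong H_i^S(X,\mathbb{Z})$, and applied to $K\hookrightarrow L$ it gives $B_{0,i}(\Spec(L),X)\cong H_i^S(X_L,\mathbb{Z})$; one checks that under these identifications the canonical map $B_{0,i}(\Spec(K),X)\to B_{0,i}(\Spec(L),X)$ coming from the morphism $\Spec(L)\to\Spec(K)$ in $\mathbf{S}_K$ is exactly the base-change homomorphism $\varphi$ --- a routine unwinding of how $B_{0,i}$ is made functorial in its first argument and of how $\varphi$ is defined. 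The two claimed isomorphisms on $[n]$ and on $(-)/n$, together with the assertion that $\varphi$ sends torsion-free elements to torsion-free elements, are then precisely the outputs of Theorem~\ref{thm-Jannsen}(i) and~(ii).

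From this vantage point the corollary is essentially formal: the genuine obstacles --- reworking Jannsen's axiomatics so that it applies to a category like $\mathbf{S}_K$ on which fiber products are only partially defined, and verifying that $B_{0,i}(-,X)$ actually satisfies those axioms (notably axiom (f)) --- have already been dealt with in the preceding discussion and in Proposition~\ref{prop-rigid}. The only points requiring any care here are small pieces of bookkeeping: that $\Spec(K)$ qualifies for part (ii) of the rigidity theorem, and that the formal functoriality of $B_{0,i}$ in its first variable is compatible with the geometric base-change map on Suslin homology.
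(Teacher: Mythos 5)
Your proposal is correct and follows exactly the paper's route: apply Theorem \ref{thm-Jannsen} to the rigid functor $B_{0,i}(-,X)$ on $\mathbf{S}_K$ (rigidity coming from Proposition \ref{prop-rigid}), then use the identification (\ref{eq-biv-suslin}) with $\Spec(K)$ and $\Spec(L)$ to translate the conclusion into Suslin homology. The paper states this in a single sentence preceding the corollary, and your unwinding of the bookkeeping — including the observation that $\Spec(K)$ trivially qualifies for part (ii) of the rigidity theorem — matches it precisely.
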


Now we are ready to show the main theorems of this paper.
\begin{theorem}\label{thm-mild-2}
Let $\Bbbk$ be an algebraically closed field of characteristic 0, $X$ a separated scheme of finite type over $\Bbbk$. Then $H_i^S(X,\mathbb{Z})$ is mild, and
\begin{enumerate}
	\item[(i)] $s(H_{i-1}^S(X,\mathbb{Z}))+r(H_{i}^S(X,\mathbb{Z}))=r_{\mathbb{Q}_l}(H_{\et}^i(X,\mathbb{Q}_{\ell}))$, where $r_{\mathbb{Q}_{\ell}}$ means the rank of a $\mathbb{Q}_{\ell}$-vector space.
	\item[(ii)] There are canonical isomorphisms
	\[
		H_i^S(X,\mathbb{Z})_{\mathrm{tor}}\cong \bigoplus_{\ell}H^{i+1}_{\et}(X,\mathbb{Z}_{\ell})_{\mathrm{tor}}.
	\]
\end{enumerate}
\end{theorem}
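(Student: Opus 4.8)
The plan is to deduce Theorem \ref{thm-mild-2} from Theorem \ref{thm-mild-1} by the Lefschetz principle: I will realize $\Bbbk$ as a filtered colimit of algebraically closed subfields that admit an embedding into $\mathbb{C}$, and then apply the colimit criterion Proposition \ref{prop-limit} together with the rigidity corollary following Proposition \ref{prop-rigid}. First I would spread $X$ out: there are a subfield $\Bbbk_0\subseteq\Bbbk$ finitely generated over $\mathbb{Q}$, a separated scheme $X_0'$ of finite type over $\Bbbk_0$, and an isomorphism $X\cong X_0'\times_{\Bbbk_0}\Bbbk$; writing $\overline{\Bbbk_0}$ for the relative algebraic closure of $\Bbbk_0$ in $\Bbbk$ and $X_0:=X_0'\times_{\Bbbk_0}\overline{\Bbbk_0}$, we have $X\cong X_0\times_{\overline{\Bbbk_0}}\Bbbk$. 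Let $\mathcal{K}$ be the set of algebraically closed subfields $K$ with $\overline{\Bbbk_0}\subseteq K\subseteq\Bbbk$ and $\mathrm{trdeg}_{\mathbb{Q}}(K)<\infty$, partially ordered by inclusion. Since $\overline{\Bbbk_0}\in\mathcal{K}$ and the relative algebraic closure in $\Bbbk$ of $\mathbb{Q}(B)$ for any finite subset $B\subseteq\Bbbk$ belongs to $\mathcal{K}$, the poset $\mathcal{K}$ is nonempty and filtered, and $\Bbbk=\varinjlim_{K\in\mathcal{K}}K$. Each $K\in\mathcal{K}$ is algebraic over a purely transcendental extension of $\mathbb{Q}$ of finite degree, hence admits a field embedding into $\mathbb{C}$ whose image is an algebraically closed subfield, so Theorem \ref{thm-mild-1} applies to $X_K:=X_0\times_{\overline{\Bbbk_0}}K$: each $H_i^S(X_K,\mathbb{Z})$ is mild and satisfies the analogues of (i) and (ii) relative to $H^i_{\et}(X_K,\mathbb{Q}_{\ell})$ and $H^{i+1}_{\et}(X_K,\mathbb{Z}_{\ell})$.

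Next I would identify $H_i^S(X,\mathbb{Z})$ with $\varinjlim_{K\in\mathcal{K}}H_i^S(X_K,\mathbb{Z})$. The group $C_n(X_K)$ is free on the integral closed subschemes of $\Delta^n_K\times_K X_K$ finite and surjective over $\Delta^n_K$; each such subscheme is of finite presentation and the conditions of finiteness and surjectivity are constructible, so the standard limit formalism gives $C_n(X)\cong\varinjlim_{K\in\mathcal{K}}C_n(X_K)$ levelwise, and $\pi_i$ commutes with filtered colimits, whence $H_i^S(X,\mathbb{Z})\cong\varinjlim_{K\in\mathcal{K}}H_i^S(X_K,\mathbb{Z})$. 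For an inclusion $K\subseteq K'$ in $\mathcal{K}$, the corollary following Proposition \ref{prop-rigid} shows that $H_i^S(X_K,\mathbb{Z})\to H_i^S(X_{K'},\mathbb{Z})$ induces isomorphisms on $n$-torsion and on reductions modulo $n$ for all $n$. Thus the hypotheses of Proposition \ref{prop-limit} hold, so $H_i^S(X,\mathbb{Z})$ is mild, and for every $K\in\mathcal{K}$ there are canonical isomorphisms $H_i^S(X_K,\mathbb{Z})_{\mathrm{tor}}\xrightarrow{\sim}H_i^S(X,\mathbb{Z})_{\mathrm{tor}}$, $\big(H_i^S(X_K,\mathbb{Z})\big)_{\mathrm{td}}\xrightarrow{\sim}\big(H_i^S(X,\mathbb{Z})\big)_{\mathrm{td}}$ and $Z\big(H_i^S(X_K,\mathbb{Z})\big)\xrightarrow{\sim}Z\big(H_i^S(X,\mathbb{Z})\big)$; in particular the invariants $s$ and $r$ are unchanged by base change from $K$ to $\Bbbk$.

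To conclude, I would combine these invariances with the invariance of $\ell$-adic étale cohomology under an extension of algebraically closed base fields of characteristic $0$ (a standard consequence of base change applied to the finite coefficient groups $H^*_{\et}(-,\mathbb{Z}/\ell^n)$ and then passed to the limit): $r_{\mathbb{Q}_{\ell}}\big(H^i_{\et}(X_K,\mathbb{Q}_{\ell})\big)=r_{\mathbb{Q}_{\ell}}\big(H^i_{\et}(X,\mathbb{Q}_{\ell})\big)$ and $\bigoplus_{\ell}H^{i+1}_{\et}(X_K,\mathbb{Z}_{\ell})_{\mathrm{tor}}\cong\bigoplus_{\ell}H^{i+1}_{\et}(X,\mathbb{Z}_{\ell})_{\mathrm{tor}}$. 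Feeding these into the analogues of (i) and (ii) for $X_K$ provided by Theorem \ref{thm-mild-1} transports both formulas verbatim to $X$, the resulting isomorphisms being independent of $K$ by compatibility with the transition maps. I do not anticipate a genuine obstacle here: the one delicate point — that the base-change maps are isomorphisms after $[n]$ and $/n$ — is exactly what the rigidity corollary after Proposition \ref{prop-rigid} (ultimately Jannsen's theorem \ref{thm-Jannsen} applied to the bivariant theory $B_{0,i}$) supplies, and the remaining steps are the routine limit computation for $C_\bullet$ and bookkeeping with the comparisons among Suslin, étale and singular cohomology.
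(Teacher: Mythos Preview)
Your proposal is correct and follows essentially the same route as the paper: write $X$ as a filtered colimit of base changes from algebraically closed subfields of $\Bbbk$ of finite transcendence degree over $\mathbb{Q}$ (each embeddable in $\mathbb{C}$), invoke Theorem~\ref{thm-mild-1} on each, use the rigidity corollary after Proposition~\ref{prop-rigid} to verify the hypotheses of Proposition~\ref{prop-limit}, and read off mildness together with (i) and (ii). The only cosmetic difference is that the paper indexes over a filtered category $J$ of all models $(Y\to\Spec K,\,Y\times_K\Bbbk\xrightarrow{\sim}X)$, whereas you fix one spread-out model $X_0/\overline{\Bbbk_0}$ and index over the poset of algebraically closed intermediate fields; your version is a bit more concrete and you also spell out the \'etale base-change step for (i) and (ii), which the paper leaves implicit.
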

Proof: Fix $X$, a  separated scheme  of finite type over $\Bbbk$.
Let $J$ be the category such that the set of objects consists of all the pairs $(Y\rightarrow \Spec(K), f:Y\times_K \Bbbk\rightarrow X)$, where $K$ is an algebraically closed subfield of $\Bbbk$ with $\mathrm{trdeg(K/\mathbb{Q})}<+\infty$, $Y\rightarrow \Spec(K)$ a separated morphism of finite type, and $f$ an isomorphism of $\Bbbk$-schemes, and 
 	\begin{eqnarray*}
 	&&\Hom_J\big((Y_1\rightarrow \Spec(K_1), f_1:Y_1\times_{K_1} \Bbbk\rightarrow X),(Y_2\rightarrow \Spec(K_2), f_2:Y_2\times_{K_2} \Bbbk\rightarrow X)\big)\\
 	&=&\begin{cases}
 	\emptyset, & \mbox{if}\ K_1\not\subset K_2,\\
 	\{g: Y_1\times_{K_1}K_2\xrightarrow{\sim} Y_2| f_2\circ(g\times_{K_2}\Bbbk)=f_1\}, & 
 	\mbox{if}\ K_1\subset K_2.
 	\end{cases}
 	\end{eqnarray*}
 By the Lefschetz principle, the category $J$ is filtered. 

 A morphism $g$ as above induces a homomorphism
 \[
 H_i^S(Y_1,\mathbb{Z})\rightarrow H_i^S(Y_2,\mathbb{Z}),
 \]
 where the left-handside is the Suslin homology of the $K_1$-scheme $Y_1$, and the right-handside is that of the $K_2$-scheme $Y_2$. By the Lefschetz principle again we have a natural isomorphism
 \begin{equation}
 \varinjlim_{J}H_i^S(Y_j,\mathbb{Z})\xrightarrow{\sim}
 H_i^S(X,\mathbb{Z}).
 \end{equation}
 By theorem \ref{thm-mild-1}, applying proposition \ref{prop-limit}, we are done.\pqed
 
\begin{theorem}\label{thm-mild-3}
Let $\Bbbk$ be an algebraically closed field of characteristic 0, $X,Y$  separated schemes of finite type over $\Bbbk$. 
\begin{enumerate}
  \item[(i)] For a morphism of $\Bbbk$-schemes $f:X\rightarrow Y$, the induced homomorphism
  $f_*:H_i^S(X,\mathbb{Z})\rightarrow H_i^S(Y,\mathbb{Z})$ of mild abelian groups satisfies the condition (M1).
    \item[(ii)] For a morphism of $\Bbbk$-schemes $f:X\rightarrow X$, the induced endomorphism
  $f_*:H_i^S(X,\mathbb{Z})\rightarrow H_i^S(X,\mathbb{Z})$ of mild abelian groups satisfies the condition (M2).
\end{enumerate}
\end{theorem}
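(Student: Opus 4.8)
The plan is to run the same Lefschetz-principle argument as in the proof of Theorem \ref{thm-mild-2}, but now keeping track of the morphism. Fix $f:X\to Y$ (resp.\ $f:X\to X$). Let $J$ be the category whose objects are data $(K,\ \mathcal{X}\to \Spec K,\ \mathcal{Y}\to \Spec K,\ \phi:\mathcal{X}\to\mathcal{Y},\ \alpha:\mathcal{X}_{\Bbbk}\xrightarrow{\sim}X,\ \beta:\mathcal{Y}_{\Bbbk}\xrightarrow{\sim}Y)$, where $K\subset\Bbbk$ is an algebraically closed subfield with $\mathrm{trdeg}(K/\mathbb{Q})<\infty$, the schemes $\mathcal{X},\mathcal{Y}$ are separated of finite type over $K$, $\phi$ is a $K$-morphism, and $f\circ\alpha=\beta\circ(\phi\times_K\Bbbk)$; morphisms are defined exactly as in the proof of Theorem \ref{thm-mild-2} (pairs of isomorphisms-after-base-change of the models), now additionally required to be compatible with $\phi$, $\alpha$, $\beta$. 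In case (ii) one takes $\mathcal{Y}=\mathcal{X}$, $\beta=\alpha$. By the usual spreading-out, any finite packet of such data descends to a finitely generated, hence — after passing to its algebraic closure inside $\Bbbk$ — to an algebraically closed subfield of finite transcendence degree over $\mathbb{Q}$, and any two such descents agree over a common larger such field; thus $J$ is filtered, and the forgetful functors sending an object to $(K,\mathcal{X},\alpha)$, resp.\ $(K,\mathcal{Y},\beta)$, are cofinal in the index categories used in the proof of Theorem \ref{thm-mild-2}.

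Consequently, setting $G_j=H_i^S(\mathcal{X}_j,\mathbb{Z})$ and $H_j=H_i^S(\mathcal{Y}_j,\mathbb{Z})$ (Suslin homology of the $K_j$-schemes), we obtain two filtered systems over $J$ with $\varinjlim_J G_j\cong H_i^S(X,\mathbb{Z})$ and $\varinjlim_J H_j\cong H_i^S(Y,\mathbb{Z})$, together with the system of homomorphisms $f_j:=(\phi_j)_*:G_j\to H_j$ whose colimit is $f_*:H_i^S(X,\mathbb{Z})\to H_i^S(Y,\mathbb{Z})$. Each transition morphism in $(G_j)$ and $(H_j)$ is a base-change map followed by an isomorphism induced by an isomorphism of models; by the rigidity corollary following Proposition \ref{prop-rigid} (which rests on Theorem \ref{thm-Jannsen} applied to the bivariant functor $B_{0,i}$), such maps induce isomorphisms on $n$-torsion and on mod-$n$ quotients and send torsion-free elements to torsion-free elements, so hypotheses (a) and (b) of Proposition \ref{prop-limit-hom} hold. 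Since each $K_j$ embeds into $\mathbb{C}$, Theorem \ref{thm-mild-1}(iii) shows that each $f_j$ satisfies (M1), which is hypothesis (c). Proposition \ref{prop-limit-hom} then gives that $f_*$ satisfies (M1), proving (i). Part (ii) is obtained in exactly the same way, using Theorem \ref{thm-mild-1}(iv) in place of (iii) and Proposition \ref{prop-limit-endo} in place of Proposition \ref{prop-limit-hom}.

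The only genuine work beyond invoking these earlier results is the bookkeeping with the index category $J$ — checking it is filtered and that the two forgetful functors to the categories of Theorem \ref{thm-mild-2} are cofinal — together with the verification that the transition maps satisfy conditions (a), (b) of Proposition \ref{prop-limit-hom}. The latter is where the rigidity input (Jannsen's theorem via the bivariant theory $B_{0,\bullet}$) is really used, just as in the proof of Theorem \ref{thm-mild-2}; everything else is a routine descent argument. I expect the cofinality of the forgetful functors to be the most delicate point to state precisely, though it follows directly from standard spreading-out.
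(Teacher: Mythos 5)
Your proof is correct and is essentially the argument the paper intends: the paper's proof of Theorem \ref{thm-mild-3} is only a one-line pointer to Theorem \ref{thm-mild-1}(iii)--(iv) and Propositions \ref{prop-limit-hom}--\ref{prop-limit-endo}, and you have supplied precisely the filtered-colimit bookkeeping (the index category $J$ of models of the morphism over algebraically closed subfields of finite transcendence degree, cofinality, and the verification of hypotheses (a)--(c) via the rigidity corollary coming from Jannsen's theorem) that this pointer leaves implicit.
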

Proof: (i) follows from theorem \ref{thm-mild-1} (iii) and proposition \ref{prop-limit-hom}, and (ii) follows from theorem \ref{thm-mild-1} (iv) and proposition \ref{prop-limit-endo}.\pqed

\begin{remark}
As shown by Geisser \cite{Gei17}, for smooth schemes $X$, $r(H_i^S(X,\mathbb{Z})=0$ for $i>0$.  In general for a singular scheme $X$, $r(H_i^S(X,\mathbb{Z})$ may not be zero for $i>0$. To see this, consider a singular proper curve $C$ over $\mathbb{C}$. Let $\pi:\tilde{C}$ be the normalization of $C$. Let $Z=\{z_1,\cdots,z_k\}$ be the set of singular points of $C$, and $\pi^{-1}(Z)=\{y_1,\cdots,y_m\}$. Then one has the abstract blowup exact sequence (\cite[\S 9]{FV00})
\begin{eqnarray*}
H_1^S(\pi^{-1}(Z),\mathbb{Z})\rightarrow H_1^S(\tilde{C},\mathbb{Z})\oplus H_1^S(Z,\mathbb{Z})\rightarrow
H_1^S(C,\mathbb{Z})\rightarrow\\
\rightarrow H_0^S(\pi^{-1}(Z),\mathbb{Z})\rightarrow H_0^S(\tilde{C},\mathbb{Z})\oplus H_0^S(Z,\mathbb{Z})\rightarrow
H_0^S(C,\mathbb{Z})\rightarrow 0.
\end{eqnarray*}
It is easily seen that $H_1^S(\pi^{-1}(Z),\mathbb{Z})=H_1^S(Z,\mathbb{Z})=0$, $r(H_1^S(\tilde{C},\mathbb{Z}))=0$, and $H_0^S(\pi^{-1}(Z),\mathbb{Z})=\mathbb{Z}^{m}$. Thus $r(H_1^S(C,\mathbb{Z}))=0$ if and only if the homomorphism
\begin{eqnarray*}
H_0^S(\pi^{-1}(Z),\mathbb{Z})\rightarrow H_0^S(\tilde{C},\mathbb{Z})\oplus H_0^S(Z,\mathbb{Z})
\end{eqnarray*}
is injective. But one can find a smooth proper curve $\tilde{C}$ and distinguished points $y_1,\cdots,y_m$ on $\tilde{C}$ such that $y_1+\cdots+y_l-y_{l+1}-\cdots-y_m$ is zero in $H_0^S(\tilde{C},\mathbb{Z})=CH^1(\tilde{C})$ for some $0\leq l\leq m$. Gluing $y_1,\cdots,y_m$ we obtain a curve $C$ with a normal crossing singular point $z$, such that the above homomorphism is not injective.

\end{remark}

\section{Further problems}
In this section we make two conjectures on the generalizations of theorems \ref{thm-mild-2} and \ref{thm-mild-3}. 
\subsection{Correspondence homomorphisms of Suslin homology}
Let $\Bbbk$ be an algebraically closed field of characteristic 0, and $X,Y\in \mathbf{Sch}/\Bbbk$. Let $X\times_{\Bbbk} Y\rightarrow X\xrightarrow{p} \Spec(\Bbbk)$ be the obvious projections. By \cite[3.6.3, 3.7.5]{SV00}, there is a \emph{correspondence homomorphism} of presheaves
\[
p_*(c_{\mathrm{equi}}(X\times_{\Bbbk} Y/X,0))\otimes c_{\mathrm{equi}}(X/\Spec(\Bbbk),0)\rightarrow c_{\mathrm{equi}}(X\times_{\Bbbk} Y/\Spec(\Bbbk),0)
\]
and a pushforward homomorphism of presheaves
\[
c_{\mathrm{equi}}(X\times_{\Bbbk} Y/\Spec(\Bbbk),0)\rightarrow c_{\mathrm{equi}}(Y/\Spec(\Bbbk),0).
\]
Since $p_*(c_{\mathrm{equi}}(X\times_{\Bbbk} Y/X,0))(\Delta^i)=c_{\mathrm{equi}}(X\times_{\Bbbk} Y\times_{\Bbbk}\Delta^i/X\times_{\Bbbk}\Delta^i,0))$, every face map $\Delta^{i-1}\hookrightarrow \Delta^i$ induces a commutative diagram
\begin{tiny}
\begin{equation*}
\xymatrix{
  c_{\mathrm{equi}}(X\times Y\times\Delta^i/X\otimes\Delta^i,0)
  \otimes c_{\mathrm{equi}}(X\times \Delta^i/\Delta^i,0) \ar[r] \ar[d] &
  c_{\mathrm{equi}}(X\times Y\times \Delta^i/\Delta^i,0) \ar[r] \ar[d] & 
  c_{\mathrm{equi}}(Y\times \Delta^i/\Delta^i,0)\ar[d] \\
  c_{\mathrm{equi}}(X\times Y\times\Delta^{i-1}/X\otimes\Delta^{i-1},0)
  \otimes c_{\mathrm{equi}}(X\times \Delta^{i-1}/\Delta^{i-1},0) \ar[r]  &
  c_{\mathrm{equi}}(X\times Y\times \Delta^{i-1}/\Delta^{i-1},0) \ar[r]  & 
  c_{\mathrm{equi}}(Y\times \Delta^{i-1}/\Delta^{i-1},0)
}
\end{equation*}
\end{tiny}
which induce a correspondence homomorphism
\begin{equation}\label{eq-corresp-suslin}
c_{\mathrm{equi}}(X\times Y/X,0)\otimes H_i^S(X,\mathbb{Z})\rightarrow H_i^S(Y,\mathbb{Z}).
\end{equation}
For $\alpha\in c_{\mathrm{equi}}(X\times Y/X,0)$, we denote by $\mathrm{tr}_{\alpha}$ the homomorphism $H_i^S(X,\mathbb{Z})\rightarrow H_i^S(Y,\mathbb{Z})$ induced by $\alpha$.
The following conjecture\footnote{This conjecture has been confirmed by Bruno Kahn; see the appendix.} is to generalize theorem \ref{thm-mild-3} to the homomorphisms induced by any correspondence, not necessarily the ones induced by morphisms.
\begin{conjecture}\phantomsection\label{conj-cor-hom}
\begin{enumerate}
  \item[(i)] For any $\alpha\in c_{\mathrm{equi}}(X\times Y/X,0)$, $\mathrm{tr}_{\alpha}$ satisfies (M1).
  \item[(ii)] For any $\alpha\in c_{\mathrm{equi}}(X\times X/X,0)$, $\mathrm{tr}_{\alpha}$ satisfies (M2).
\end{enumerate}

\end{conjecture}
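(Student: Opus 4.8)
The plan is to carry out the argument of Sections 3--4 with the correspondence $\alpha$ in place of (the graph of) a morphism of schemes, so that Theorems \ref{thm-mild-1}(iii),(iv) and \ref{thm-mild-3} are recovered as the special case $\alpha=\Gamma_f$. As in those proofs there are two steps: a reduction, via the Lefschetz principle and rigidity, to algebraically closed subfields of $\mathbb{C}$; and a transcendental step over $\mathbb{C}$ using the Suslin--Voevodsky/Dold--Thom comparison together with Propositions \ref{prop-comparison2} and \ref{prop-comparison3}.

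First I would reduce to the case $\Bbbk\subset\mathbb{C}$. The schemes $X,Y$ and the relative cycle $\alpha$ are all defined over an algebraically closed subfield $K_0\subset\Bbbk$ of finite transcendence degree over $\mathbb{Q}$, and any such field embeds into $\mathbb{C}$. Enlarging the filtered category $J$ of the proof of Theorem \ref{thm-mild-2} so that its objects also carry a model of $\alpha$, one obtains $\varinjlim_J H_i^S(X_K,\mathbb{Z})\xrightarrow{\sim}H_i^S(X,\mathbb{Z})$, likewise for $Y$, and a compatible system of maps $\mathrm{tr}_{\alpha_K}\colon H_i^S(X_K,\mathbb{Z})\to H_i^S(Y_K,\mathbb{Z})$ with colimit $\mathrm{tr}_\alpha$; compatibility with enlarging $K$ holds because the correspondence pairing of \cite{SV00} is assembled from flat pullback, intersection and proper pushforward of cycles, all of which commute with the flat base change $\mathrm{Spec}\,K'\to\mathrm{Spec}\,K$. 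By the corollary following Proposition \ref{prop-rigid} (an application of Theorem \ref{thm-Jannsen} to the rigid functor $B_{0,i}(-,X)$), every transition map in both systems induces isomorphisms on $[n]$ and $/n$ and sends torsion-free elements to torsion-free elements. Granting the conjecture over algebraically closed subfields of $\mathbb{C}$, Proposition \ref{prop-limit-hom} then yields (i) and Proposition \ref{prop-limit-endo} yields (ii).

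Over $\Bbbk\subset\mathbb{C}$ the goal is to fit $\mathrm{tr}_\alpha$ into a commutative square of chain complexes of free abelian groups
\[
\xymatrix{
K(C_\bullet(X)) \ar[r]^-{\Phi_X} \ar[d]_{c_\bullet(\alpha)} & K\big(\mathbb{Z}(\mathrm{Sin}_\bullet(X(\mathbb{C})))\big) \ar[d]^{c_\bullet^{\mathrm{top}}(\alpha)} \\
K(C_\bullet(Y)) \ar[r]^-{\Phi_Y} & K\big(\mathbb{Z}(\mathrm{Sin}_\bullet(Y(\mathbb{C})))\big),
}
\]
where $\Phi_X,\Phi_Y$ are the comparison maps $\Phi$ of the proof of Theorem \ref{thm-mild-1} (composed with the base-change map $\Psi$ of that proof when $\Bbbk\subsetneq\mathbb{C}$), hence quasi-isomorphisms after $\otimes\,\mathbb{Z}/n$ for every $n$ by Theorem \ref{thm-SV8.3} and the étale comparison; the right-hand complexes compute the finitely generated groups $H_*^{\mathrm{sing}}(X(\mathbb{C}))$, $H_*^{\mathrm{sing}}(Y(\mathbb{C}))$; the left vertical $c_\bullet(\alpha)$ induces $\mathrm{tr}_\alpha$ on homology, being just (\ref{eq-corresp-suslin}) at the level of complexes; and $c_\bullet^{\mathrm{top}}(\alpha)$ is the transfer map on singular chains attached to $\alpha$. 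Given such a square, Proposition \ref{prop-comparison2} shows at once that $\mathrm{tr}_\alpha=H_i(c_\bullet(\alpha))$ satisfies (M1), and in the case $X=Y$ Proposition \ref{prop-comparison3} gives (M2), which completes the proof.

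The remaining, and hardest, point is the construction of $c_\bullet^{\mathrm{top}}(\alpha)$ and the commutativity of the square (at worst up to chain homotopy, which suffices after passing to a functorial model of $\Phi$). Since $\alpha$ has support proper and quasi-finite, hence finite, over $X$, over $\mathbb{C}$ it defines a ``multivalued map'' $X(\mathbb{C})\to\coprod_j\mathrm{S}^j(Y(\mathbb{C}))$; writing $\alpha=\alpha^+-\alpha^-$ with $\alpha^\pm$ effective and invoking Proposition \ref{prop-iso-symprod} on the normal bases $\Delta^i$ that occur in the Suslin complex, it induces, after group completion, a morphism of simplicial abelian monoids $\mathrm{Sin}\big(\coprod_j\mathrm{S}^j(X(\mathbb{C}))\big)\to\mathrm{Sin}\big(\coprod_j\mathrm{S}^j(Y(\mathbb{C}))\big)$, whence a transfer $c_\bullet^{\mathrm{top}}(\alpha)$ via the Dold--Thom identification of Corollary \ref{cor-kan-fibration}. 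The real content is that the whole Suslin--Voevodsky comparison (the diagram in the proof of Theorem \ref{thm-SV8.3}) is natural not merely for morphisms of schemes but for finite correspondences: each presheaf occurring there ($z_0^c$, $\mathbb{Z}(-)_{\mathrm{qfh}}$, $c_{\mathrm{equi}}(-,0)$, the symmetric-product presheaves) carries a transfer (``presheaf with traces'') structure, the topological constructions $\mathrm{SP}$, $\mathrm{AG}$, $\mathrm{Sin}(\coprod\mathrm{S}^j(-))^+$ have matching transfers, and the rigidity inputs \cite[theorem 4.4]{SV96} and Lemma \ref{lem-rigidity-top} respect them, so that the diagram chase of Theorem \ref{thm-SV8.3} should go through verbatim with $\alpha$ replacing a graph. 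Making this transfer-functoriality precise --- in particular choosing the resolutions $\Phi_X,\Phi_Y$ compatibly with correspondences --- is the step I expect to be the main obstacle; once it is in place the rest of the argument is formal.
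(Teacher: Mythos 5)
Your proposal is not the route the paper takes, and it contains a genuine unresolved gap which you yourself flag. The paper proves Conjecture \ref{conj-cor-hom} in the appendix by B.~Kahn using an entirely different, categorical mechanism: working in Voevodsky's category $\DM^\et(\C)$ of \'etale motives (where transfers are part of the structure), take the Betti realization $B^*:\DM^\et\to D(\Ab)$, form its right adjoint $B_*$ (Neeman--Brown representability), and let $C(M)$ be a cone of the unit $\eta_M:M\to B_*B^*M$. The lemma in the appendix shows $\eta_{M\otimes\Z/n}$ is an isomorphism for $M\in\DM^\et_\gm$, so $C(M)$ is uniquely divisible, while $D(\Ab)(\Z[i],B^*M)$ is finitely generated; hence $\DM^\et(\Z[i+1],C(M))\to\DM^\et(\Z[i],M)$ is a ``presentation'' in the sense of the appendix, and a morphism $\phi:M\to M'$ of motives yields a morphism of presentations by the weak functoriality of cones (axiom TR3). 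Since $\DM^\et(\Z[i],M(X))\simeq H_i^S(X)$ and finite correspondences $\alpha$ give genuine morphisms $M(X)\to M(Y)$ in $\DM^\et_\gm$, this proves (M1) and (M2) for $\mathrm{tr}_\alpha$ over $\C$ at once; the passage to a general algebraically closed field of characteristic zero uses Voevodsky's rigidity \cite[Prop.~3.3.3]{voetri} rather than Jannsen's theorem.

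By contrast, your plan is to re-run the chain-level Suslin--Voevodsky/Dold--Thom comparison from Sections 3--4 with $\alpha$ in place of the graph of a morphism, and then invoke Propositions \ref{prop-comparison2} and \ref{prop-comparison3}. The reduction to $\Bbbk\subset\C$ is fine in outline. But over $\C$ you need a genuinely commutative (or at least coherently homotopy-commutative) square of chain complexes in which the left vertical is $c_\bullet(\alpha)$, the right vertical is a transfer map $c^{\mathrm{top}}_\bullet(\alpha)$ on singular chains, and the horizontals are the chosen free liftings $\Phi_X,\Phi_Y$ from diagram (\ref{graph1}). You correctly identify that constructing $c^{\mathrm{top}}_\bullet(\alpha)$ and, more seriously, choosing the non-canonical liftings $\Phi_X,\Phi_Y$ compatibly with an arbitrary finite correspondence is ``the main obstacle'' --- and that obstacle is not overcome in your write-up. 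What makes this hard is precisely what the appendix's categorical framework dissolves: in $\DM^\et$ the correspondence is a morphism, the realization is a $\otimes$-triangulated functor, and the needed compatibility of ``presentations'' falls out of TR3, whereas at the chain level one would have to carry transfer structures and homotopy coherences through every step of the proof of Theorem~\ref{thm-SV8.3}, which is substantial extra work not carried out here. So your proposal sketches a plausible but incomplete alternative strategy; it is not a proof, and it is not the paper's proof.
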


\subsection{In positive characteristic}
Let $\Bbbk$ be an algebraically closed field of positive characteristic $p$. 
\begin{conjecture}\label{conj-pmild}
Let $X$ be a scheme separated of finite type over $\Bbbk$. Then
\[
H_i^S(X,\mathbb{Z}[\frac{1}{p}])\cong V\oplus \bigoplus_{\ell\neq p}(\mathbb{Q}_p/\mathbb{Z}_p)^{s_{i,\ell}}\oplus \mathbb{Z}[\frac{1}{p}]^{\oplus r_i} \oplus T
\]
where $V$ is a uniquely divisible group, and $T=\bigoplus_{\ell\neq p}H^{i+1}_{\et}(X,\mathbb{Z}_{\ell})_{\mathrm{tor}}$, and 
\[
s_{i-1,\ell}+r_i=b_{i,\ell}(X)=\mathrm{rank}(H^{i}_{\et}(X,\mathbb{Z}_{\ell})).
\]
\end{conjecture}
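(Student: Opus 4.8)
The plan is to follow the two-step pattern of Theorems~\ref{thm-mild-1} and \ref{thm-mild-2}, replacing the transcendental input (symmetric products of $X(\mathbb{C})$ and the Dold--Thom theorem) by an $\ell$-adic substitute. First I would set up the $\mathbb{Z}[\tfrac{1}{p}]$-linear analogue of the theory of mild groups: call a $\mathbb{Z}[\tfrac{1}{p}]$-module \emph{$p$-mild} if it is isomorphic to $V\oplus\bigoplus_{\ell\neq p}(\mathbb{Q}_\ell/\mathbb{Z}_\ell)^{\oplus s_\ell}\oplus\mathbb{Z}[\tfrac{1}{p}]^{\oplus r}\oplus T$ with $V$ uniquely divisible, $s_\ell,r\geq 0$, and $T$ finite of order prime to $p$. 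Then Lemma~\ref{lem-mild-1}, Proposition~\ref{prop-mild-1}, Propositions~\ref{prop-comparison1}--\ref{prop-comparison3} and Propositions~\ref{prop-limit}--\ref{prop-limit-endo} all carry over once $\mathbb{Z}/n$ is restricted to integers $n$ prime to $p$, equivalently after tensoring all complexes with $\mathbb{Z}[\tfrac{1}{p}]$ and using the universal coefficient sequences over $\mathbb{Z}[\tfrac{1}{p}]$. Next I would note that Theorem~\ref{thm-Jannsen}, which is already stated for an arbitrary algebraically closed field, applies to $B_{0,i}(-,X)\otimes\mathbb{Z}[\tfrac{1}{p}]$: the proof of Proposition~\ref{prop-rigid} adapts to characteristic $p$ almost verbatim, the only point using $\mathrm{char}\,\Bbbk=0$ being the symmetric-product identification \eqref{eq-sv96.6.8-1}, which already carries a $[\tfrac{1}{p}]$ and is available in arbitrary characteristic by Proposition~\ref{prop-iso-symprod}.

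Granting this, the \emph{Lefschetz step} is formal: for $X$ over an arbitrary algebraically closed $\Bbbk$ of characteristic $p$, one writes $H_i^S(X,\mathbb{Z}[\tfrac{1}{p}])$ as the filtered colimit of $H_i^S(Y_j,\mathbb{Z}[\tfrac{1}{p}])$ over pairs $(Y_j\to\Spec K_j,\ Y_j\times_{K_j}\Bbbk\xrightarrow{\sim} X)$ with $K_j\subseteq\Bbbk$ algebraically closed of finite transcendence degree over $\mathbb{F}_p$, exactly as in the proof of Theorem~\ref{thm-mild-2}; the rigidity of $B_{0,i}(-,Y_j)\otimes\mathbb{Z}[\tfrac{1}{p}]$ and Theorem~\ref{thm-Jannsen} make the transition maps isomorphisms on $\ell^\nu$-torsion and on reduction mod $\ell^\nu$ and preserve torsion-freeness, so the $\mathbb{Z}[\tfrac{1}{p}]$-analogue of Proposition~\ref{prop-limit} reduces matters to the case $\mathrm{trdeg}(\Bbbk/\mathbb{F}_p)<\infty$. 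For the \emph{comparison step} over such a base one needs the analogue of Theorem~\ref{thm-mild-1}: here the Suslin--Voevodsky comparison $H_S^\bullet(X,\mathbb{Z}/\ell^\nu)\cong H_{\et}^\bullet(X,\mathbb{Z}/\ell^\nu)$ for $\ell\neq p$ \cite{SV96} plays the role of the Dold--Thom comparison, and one would construct a chain complex $L_\bullet$ of free $\mathbb{Z}[\tfrac{1}{p}]$-modules together with a map $K(C_\bullet(X))\otimes\mathbb{Z}[\tfrac{1}{p}]\to L_\bullet$ that becomes a quasi-isomorphism modulo $\ell^\nu$ for every $\ell\neq p$ and whose homology is $p$-mild, assembled from the $\ell$-adic étale realizations of the motive $C_\bullet(X)\otimes\mathbb{Z}[\tfrac{1}{p}]$ in Kelly's $\mathbb{Z}[\tfrac{1}{p}]$-linear $\DM$ \cite{Kel13} (or in the étale motives of Ayoub and Cisinski--Déglise). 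Applying the $\mathbb{Z}[\tfrac{1}{p}]$-analogue of Proposition~\ref{prop-comparison1} would then give $p$-mildness of $H_i^S(X,\mathbb{Z}[\tfrac{1}{p}])$, the identities $s_{i-1,\ell}+r_i=b_{i,\ell}(X)$ and $T\cong\bigoplus_{\ell\neq p}H^{i+1}_{\et}(X,\mathbb{Z}_\ell)_{\mathrm{tor}}$ dropping out of \eqref{eq-comparison1-1} and \eqref{eq-comparison1-2} read one $\ell$ at a time, as in Theorem~\ref{thm-mild-1}(i),(ii).

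The hard part will be the comparison step, specifically producing a complex $L_\bullet$ whose homology is \emph{provably} $p$-mild. The naive candidate --- a free resolution of the product over $\ell\neq p$ of the $\ell$-adic homology complexes --- is unsafe, since a product of $\mathbb{Z}_\ell$'s need not be mild (Remark~\ref{rmk-mild-1}); one must instead realize $L_\bullet$ as a bona fide motivic object whose homology is controlled $\ell$ by $\ell$ and is finitely generated over $\mathbb{Z}[\tfrac{1}{p}]$ modulo its divisible part. In other words, the genuinely new input required is that $H_i^S(X,\mathbb{Z}[\tfrac{1}{p}])/H_i^S(X,\mathbb{Z}[\tfrac{1}{p}])_{\mathrm{div}}$ be finitely generated over $\mathbb{Z}[\tfrac{1}{p}]$ --- the characteristic-$p$ replacement for the finite generation of the singular homology of $X(\mathbb{C})$. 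For $X$ over $\overline{\mathbb{F}_p}$ this should follow from the finiteness of $\ell$-adic cohomology, but for singular or non-proper $X$, and over algebraically closed base fields of positive transcendence degree over $\mathbb{F}_p$ (where $V$ is in general nonzero, e.g. for elliptic curves over $\overline{\mathbb{F}_p(t)}$), it is the crux; a dévissage to the smooth projective case of Geisser \cite{Gei17} is obstructed in general by Remark~\ref{rmk-mild-1}, although the particular shape of the abstract-blowup exact sequences --- in which one only takes cokernels, or kernels of maps into groups with $s=0$, cf. Proposition~\ref{prop-mild-1} --- might be exploitable in favorable situations. The motivic framework of the appendix seems the natural setting in which to attempt this.
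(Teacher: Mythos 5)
This is stated as a Conjecture in the paper and is left open; there is no proof in the main body, and Kahn's appendix (Remark~\ref{r1}) explicitly explains why the motivic method falls short of it. Your ``proposal'' therefore cannot be compared against a proof, only evaluated as a diagnosis --- and as such it is accurate.

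Your preliminary steps are plausible: the $\mathbb{Z}[\tfrac{1}{p}]$-linear version of the mildness formalism, the applicability of Theorem~\ref{thm-Jannsen} to $B_{0,i}(-,X)\otimes\mathbb{Z}[\tfrac{1}{p}]$ (Proposition~\ref{prop-rigid} rests on presheaves with traces and on Proposition~\ref{prop-iso-symprod}, both available with $p$ inverted), and the Lefschetz reduction to $\mathrm{trdeg}(\Bbbk/\mathbb{F}_p)<\infty$. You then correctly name the obstruction: there is no candidate complex $L_\bullet$ with \emph{provably} $p$-mild homology to play the role of $K(\mathbb{Z}(\mathrm{Sin}_\bullet(X(\mathbb{C}))))$, because the $\ell$-adic realizations give control one prime at a time. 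This is precisely Kahn's point in Remark~\ref{r1}: for each fixed $\ell\ne p$ the Hom groups in $\DM^\et_\gm$ are $\ell$-mild, but $\ell$-mildness for every $\ell$ does not imply mildness --- $\bigoplus_{\ell}\mathbb{Z}/\ell$ is the counterexample, which is your Remark~\ref{rmk-mild-1} phenomenon in $\ell$-adic guise --- and the independence of $\ell$ for the coranks $s_{i,\ell}$ is essentially equivalent to the standard independence-of-$\ell$ conjectures for $\ell$-adic cohomology. Your identification of the missing ingredient (finite generation of $H_i^S(X,\mathbb{Z}[\tfrac{1}{p}])$ modulo its divisible part, uniformly over $\ell$) is thus exactly right. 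In short, you have not proved the conjecture, and you correctly say so; what you have written is a faithful account of why it remains open, in agreement with the appendix.
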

This conjecture implies that $H_S^i(X,\mathbb{Z}/n)\cong H_{\et}^i(X,\mathbb{Z}/n)$, which is shown in \cite{SV96}.
Furthermore we conjecture that $s_{i,\ell}$ is independent of $\ell$. It is interesting to ask whether theorem \ref{thm-mild-3} and even conjecture \ref{conj-cor-hom} hold in positive characteristic. In general there is no simultaneous choice of $W$ in definition \ref{def-cond-M2} such that all the endomorphisms $f$ induced by self-morphisms of $X$ lift to $\tilde{f}$, by Serre's counterexample \cite[exposé IX]{SGA4}.

\begin{comment}
Assuming this conjecture,  the $\ell$-independence conjecture for dimensions is equivalent to that $s_{i,\ell}=s_i$ for a common $s_i$, that is, for any $n\in \mathbb{Z}$ such that $(n,p)=1$, we have
\[
\sharp\big(H_i^S(X,\mathbb{Z}[\frac{1}{p}])[n]\big)=n^{s_i}.
\]
\end{comment}

\vspace{0.5cm}
\emph{Acknowledgement}:  I am grateful to Peng Sun and Jinxing Xu for helpful discussions and their encouragement to complete this paper.  I thank Yong Hu,  Mao Sheng, Jilong Tong and Fei Xu, for I  benefited a lot from the reading seminars on algebraic cycles they organized. Thanks also Spencer Bloch, Zhen Huan, Junchao Shentu, Jiangwei Xue, Sen Yang,  Jiu-Kang Yu, and Lei Zhang for related discussions. Finally I thank Bruno Kahn for telling me his proof and  for writing the appendix.

This work is supported by CPSF 2015T80007 and NSFC 34000-31610265, 34000-41030364, 34000-41030338.\\

\appendix

\section{Appendix by B. Kahn}
%% \label{}

%\title[Appendix]{Appendix %to\\
%On Suslin homology with integral coefficients in characteristic zero\\
%by Xiaowen Hu
%}

In this appendix, we use Voevodsky's triangulated categories of motives over $k$ and (when $k=\C$) the Betti realization functor to simplify the proofs of Theorems \ref{thm-mild-1}, \ref{thm-mild-2} and \ref{thm-mild-3} and generalize them, for example answering Conjecture \ref{conj-cor-hom} positively. We comment on Conjecture \ref{conj-pmild} in Remark \ref{r1}.

%In this appendix, we use Voevodsky's triangulated categories of motives over $k$ and (when $k=\C$) the Betti realization functor to simplify the proofs of Theorems 3.17, 4.7 and 4.8 and generalize them, for example answering Conjecture 5.1 positively.

We first note that an abelian group $G$ is mild if and only if there exists a $\Q$-vector space $V$ and a homomorphism $f:V\to G$ such that $\Ker f$ and $\Coker f$ are finitely generated. %($\Ker f$ is automatically free). 
Let us call such an $f$ a \emph{presentation}. Then presentations naturally form a category $\sC$ with a forgetful functor $F:\sC\to \Ab$ (abelian groups), $f\mapsto G$, which sends presentations to mild groups and morphisms to those verifying Conditions (M1) and (M2). 

The following features of $\sC$ are not used in the sequel, but  show that it is a good object philosophically:

\begin{itemize}
\item $\sC$ is an abelian category.
\item $F$ is exact and conservative. (This gives a kind of uniqueness to a presentation of a mild group $G$.)
\item The homology functor $\sC\to \Ab\times \Ab$ is conservative: if $\phi:f\to g$ is a morphism in $\sC$ and $H_0(\phi),H_1(\phi)$ are isomorphisms, then $\phi$ is an isomorphism. 
\end{itemize}

The strategy is now to associate to Suslin homology groups $H_i^S(X)$, when $k=\C$, a presentation which is (weakly) functorial for morphisms $X\to Y$ and more generally for correspondences.

For this, we start from Voevodsky's ``big'' category of \'etale motives $\DM^\et(\C)\allowbreak=:\DM^\et$. There is a ($\otimes$-triangulated)  ``Betti'' realization functor
\[B^*:\DM^\et\to D(\Ab)\]
where the right hand side is the derived category of abelian groups. It can be obtained in two steps. First, Ayoub constructs such a realization functor in  \cite[\S 2]{ayoub}, but starting from the category $\DA^\et$ of ``motives without transfers''; then, the natural $\otimes$-functor $\DA^\et\to \DM^\et$ is an equivalence of $\otimes$-triangulated categories by Cisinski-D\'eglise (\cite[Ann. B]{realetale}, \cite[Cor. 5.5.5 and 5.5.7]{CD}).

The category $\DM^\et$ is compactly generated and  $B^*$ commutes with infinite direct sums, hence has a right adjoint $B_*$ by Neeman's Brown representability theorem \cite[Th. 4.1]{neeman}. For any $M\in \DM^\et$, we have the unit map of this adjunction:
\[\eta_M:M\to B_*B^*(M).\]

Write $\DM^\et_\gm$ for the thick triangulated subcategory of $\DM^\et$ generated by motives of the form $M(X)(r)$ with $X$ smooth and $r\in \Z$: this is a rigid $\otimes$-triangulated subcategory.

\begin{lemma} Let $M\in \DM^\et_\gm$. Then $\eta_{M\otimes \Z/n}$ is an isomorphism for any $n>0$.
\end{lemma}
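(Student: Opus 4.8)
The plan is to deduce the lemma from the rigidity theorem for étale motives with torsion coefficients, in the form: the Betti realization induces an equivalence $\bar B^{*}\colon \DM^{\et}(\C;\Z/n)\xrightarrow{\sim} D(\Z/n)$.

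First I would reformulate the claim. Since $\DM^{\et}$ is compactly generated, say by the $M(X)(r)$ with $X$ smooth and $r\in\Z$, and $\eta$ comes from the adjunction $(B^{*},B_{*})$, the map $\eta_{M\otimes\Z/n}$ is an isomorphism if and only if for every such generator $P$ the map
\[
B^{*}\colon \Hom_{\DM^{\et}}\!\bigl(P,\,M\otimes^{\mathbf{L}}\Z/n\bigr)\longrightarrow \Hom_{D(\Ab)}\!\bigl(B^{*}P,\,B^{*}M\otimes^{\mathbf{L}}\Z/n\bigr)
\]
is an isomorphism, where I have used that $B^{*}(M\otimes^{\mathbf{L}}\Z/n)\cong B^{*}M\otimes^{\mathbf{L}}\Z/n$ (valid because $B^{*}$ is triangulated and $-\otimes^{\mathbf{L}}\Z/n$ is a cone of multiplication by $n$) together with the adjunction isomorphism $\Hom(P,B_{*}B^{*}N)\cong\Hom(B^{*}P,B^{*}N)$.

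Next I would move to the $\Z/n$-linear setting. Write $\rho$ and $\sigma$ for extension of scalars along $\Z\to\Z/n$ on $\DM^{\et}$ and on $D(\Ab)$ respectively, with forgetful right adjoints $\rho_{*},\sigma_{*}$, so that $-\otimes^{\mathbf{L}}\Z/n$ equals $\rho_{*}\rho$, resp. $\sigma_{*}\sigma$, on the two sides; then $B^{*}$ induces $\bar B^{*}\colon \DM^{\et}(\C;\Z/n)\to D(\Z/n)$ with $\sigma B^{*}\cong \bar B^{*}\rho$. Since $M\otimes^{\mathbf{L}}\Z/n$ and $B^{*}M\otimes^{\mathbf{L}}\Z/n$ carry canonical $\Z/n$-module structures, the adjunctions $\rho\dashv\rho_{*}$ and $\sigma\dashv\sigma_{*}$ identify the displayed map with
\[
\bar B^{*}\colon \Hom_{\DM^{\et}(\C;\Z/n)}\!\bigl(\rho P,\rho M\bigr)\longrightarrow \Hom_{D(\Z/n)}\!\bigl(\bar B^{*}\rho P,\bar B^{*}\rho M\bigr),
\]
which is an isomorphism because $\bar B^{*}$ is fully faithful. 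The full faithfulness — indeed, the equivalence — of $\bar B^{*}$ is the rigidity theorem of Suslin and Voevodsky \cite{SV96}, in the packaging recalled above: Ayoub's étale realization with torsion coefficients \cite{ayoub} followed by the Cisinski–D\'eglise equivalence $\DA^{\et}\xrightarrow{\sim}\DM^{\et}$. This proves the lemma; note that geometricity of $M$ never intervened, only that $P$ ranges over compact objects.

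The hard part will be the bookkeeping in the middle step: one must check that the $\Z/n$-module structure on the left ($M\otimes^{\mathbf{L}}\Z/n=\rho_{*}\rho M$) matches, under $B^{*}$ and the identification $B^{*}(M\otimes^{\mathbf{L}}\Z/n)=\sigma_{*}\bar B^{*}\rho M$, the one on the right, and that the $\Hom$-bijections above genuinely intertwine the maps induced by $B^{*}$ and by $\bar B^{*}$; this is formal but easy to botch. A secondary point: if the available form of rigidity only yields full faithfulness of $\bar B^{*}$ on motives of smooth $\C$-schemes, one adds a short d\'evissage — the objects for which the conclusion holds form a thick subcategory, and over $\C$ one has $\Z/n(1)\cong\mu_{n}\cong\Z/n$ in $\DM^{\et}(\C;\Z/n)$ by Kummer theory, which reduces everything to motives of smooth schemes with no Tate twist.
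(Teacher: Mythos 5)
Your proposal is correct in spirit and reaches the same conclusion, but by invoking a more packaged statement than the paper does; the comparison is worth spelling out because your citation of that statement is imprecise. The paper's own proof runs a dévissage: by symmetric-monoidal rigidity of $\DM^\et_\gm$ it reduces to $M=\Z$, then reduces the test object $N$ to generators $M(X)(-r)[-i]$ with $X$ smooth, handles the Tate twist with the cancellation theorem, and finally invokes the comparison isomorphism $H^i_{M,\et}(X,\Z/n(r))\xrightarrow{\sim} H^i_B(X,\Z/n)$ (\cite[Th.~10.2]{MVW} plus Artin's comparison \cite{artin}). You instead want to quote the full-strength rigidity equivalence $\bar B^{*}:\DM^{\et}(\C;\Z/n)\xrightarrow{\sim}D(\Z/n)$ and deduce the lemma from full faithfulness. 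That does work, and as you observe it even removes the geometricity hypothesis on $M$ (the compactness of the test objects is what matters). What it buys is generality and conceptual economy; what it costs is that you need the equivalence to be realized \emph{by the Betti realization}, not merely to exist.

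On that point your citation is off: the Cisinski--D\'eglise result you cite (\cite[Cor.~5.5.5, 5.5.7]{CD}) is the equivalence $\DA^\et\simeq\DM^\et$ (adding/forgetting transfers), not the rigidity theorem with torsion coefficients. The statement you actually need is the Cisinski--D\'eglise rigidity equivalence $\DM^\et(\Bbbk;\Z/n)\simeq D(\Bbbk_\et,\Z/n)$ for $1/n\in\Bbbk$ (also in \cite{CD}, but a different theorem), combined with the identification, over $\C$, of the mod-$n$ Betti realization with the mod-$n$ \'etale realization. That identification is exactly Artin's comparison theorem propagated from generators to the whole category by a d\'evissage — in other words, the ``secondary point'' and ``bookkeeping'' you flag as easy-to-botch are, if written out, precisely the paper's argument (reduction to $M(X)(-r)[-i]$, cancellation for twists, comparison on cohomology groups). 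So the two proofs are the same at bottom; yours presents it top-down assuming the packaged equivalence, the paper's bottom-up establishing what it needs on generators. If you want to keep your version, correct the citation to the rigidity theorem of \cite{CD} and add a sentence explaining why $\bar B^*$, rather than the \'etale realization, implements that equivalence over $\C$.
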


\begin{proof} By Yoneda, we have to prove that for any $N\in \DM^\et_\gm$, the morphism
\begin{multline*}
\DM^\et_\gm(N,M\otimes \Z/n)\to \DM^\et_\gm(N,B_*B^*(M\otimes \Z/n))\\
\simeq D(\Ab)(B^*(N),B^*(M\otimes \Z/n))
\simeq D(\Ab)(B^*(N),B^*(M)\otimes \Z/n )
\end{multline*}
is an isomorphism. By [symmetric monoidal] rigidity ($\DM^\et_\gm(N,M\otimes \Z/n)=\DM^\et_\gm(N\otimes M^*, \Z/n)$, etc.), we may assume $M=\Z$. The statement is then stable under taking cones and direct summands. Thus we may reduce to generators $N=M(X)(-r)[-i]$, where $X$ is smooth and $i\in \Z$, $r\ge 0$. In this case, by the cancellation theorem of \cite{voecan} (see \cite[Prop. A.3]{HK} for its \'etale version), the statement becomes that the map
\[H^i_{M,\et}(X,\Z/n(r))\to H^i_B(X,\Z/n(r))=H^i_B(X,\Z/n)\]
is an isomorphism, where the left (resp. right) hand side is \'etale motivic (resp. Betti) cohomology. This follows from \cite[Th. 10.2]{MVW} and the comparison theorem between Betti and \'etale cohomology with finite coefficients \cite{artin}.
\end{proof}

We now choose a cone $C(M)$ of $\eta_M$, for all $M\in \DM^\et_\gm$. This cone is not unique or functorial, but it is weakly so by axiom TR3 of triangulated categories: given $M'$ and a choice $C(M')$, for any morphism $\phi:M\to M'$ one can find a morphism $C(\phi):C(M)\to C(M')$ which makes the corresponding diagram of triangles commutative. This will be sufficient for us.

By the lemma, multiplication by $n$ is an isomorphism on $C(M)$ for all $n\ne 0$ (use the ``triangulated nine lemma'' of \cite[Prop. 1.11.1]{BDD}). On the other hand, $D(\Ab)(\Z[i],B^*(M))$ is finitely generated for all $i\in \Z$ (reduce to $M=M(X)(r)$ as in the proof of the lemma); hence the map
\[\DM^\et(\Z[i+1],C(M))\to \DM^\et(\Z[i],M)\]
is a presentation of the right hand side for all $i\in \Z$, and a morphism $\phi:M\to M'$ yields a morphism of presentations. 

If $M=M(X)$ for $X$ separated of finite type, then $\DM^\et(\Z[i],M)\simeq H_i^S(X)$; this concludes the proof in the case $k=\C$, generalizing the statements to all objects and morphisms of $\DM^\et_\gm$. This then extends to all Hom groups of $\DM_\gm^\et$ by rigidity. The assertions on rank and corank are also easily recovered. 
In the general case, the Suslin rigidity argument of \S 4 can be more conceptually replaced by \cite[Prop. 3.3.3]{voetri}.

\begin{rk}\label{r1} If $k$ has positive characteristic $p$, we may use the same method with the $l$-adic realization functors of \cite{realetale}, for weaker results. Namely, for a prime number $l$, say that an abelian group $G$ is $l$-mild if there exists a $\Z_l$-linear homomorphism $f:V\to G\otimes \Z_l$, with $V$ a $\Q_l$-vector space, where $\Ker f$, $\Coker f$ are finitely generated $\Z_l$-modules. %A morphism $(V\overset{f}{\to} G\otimes \Z_l)\to (V'\overset{f'}{\to} G'\otimes \Z_l)$ is a pair $(\psi:V\to V',\phi:G\to G')$ such that the obvious diagram commutes. This defines a category of \emph{$l$-presentations}, with a forgetful functor $(V\overset{f}{\to} G\otimes \Z_l)\mapsto G$. A morphism $\phi:G\to G'$ of $l$-mild groups is \emph{$l$-mild} if it arises from a morphism of $l$-presentations (if $G=G'$, we may take the same $l$-presentation). 
Then  for all $l\ne p$, $\DM^\et(N,M)$ is $l$-mild for any $N,M\in \DM^\et_\gm$%, and a morphism $\phi:M\to M'$ yields an $l$-mild morphism
. (To apply this to Suslin homology, use \cite{Ke}.)

Unfortunately, this notion is hard to globalize: the group $G=\bigoplus_l \Z/l$ is $l$-mild for all $l$, but is not mild. Similarly, independence of $l$ for the corank of $H_i^S(X)\{l\}$ is essentially equivalent to the conjectures on independence of $l$ for the ranks of $l$-adic (co)homology groups.

On the other hand, \cite[Th. A.1.3]{ABB} shows that the finitely generated quotient of the \'etale motivic cohomology of smooth $k$-varieties given by the above approach is finite in some range; this relies on \cite{WeilII} (an idea which goes back to S. Bloch). Thus, arithmetic yields extra information on the integers $r(G)$ and $s(G)$, where $G$ is a Hom group in $\DM_\gm^\et$.
\end{rk}

\enlargethispage*{20pt}

\textsc{School of Mathematics, Sun Yat-sen University, Guangzhou 510275, P.R. China}

 \emph{E-mail address:}  huxw06@gmail.com\\

\textsc{IMJ-PRG Case 247 4 place Jussieu 75252 Paris Cedex 05 France}

\emph{E-mail address:} bruno.kahn@imj-prg.fr


\begin{thebibliography}{000}
%\bibitem[Blo79]{Blo79}Bloch, Spencer. Torsion algebraic cycles and a theorem of Roitman. Compositio Math. 39 (1979), no. 1, 107–127.

\bibitem[DT58]{DT58}
Dold, Albrecht; Thom, René.
Quasifaserungen und unendliche symmetrische Produkte. 
Ann. of Math.  67 (1958), no. 2, 239–281.


\bibitem[FV00]{FV00}
Friedlander, Eric M.; Voevodsky, Vladimir. Bivariant cycle cohomology. Cycles, transfers, and motivic homology theories, 138–187, Ann. of Math. Stud., 143, Princeton Univ. Press, Princeton, NJ, 2000.
 
%\bibitem[Fuc70]{Fuc70}Fuchs, L\'{a}szl\'{o}. Infinite abelian groups. Vol. I. Pure and Applied Mathematics, Vol. 36 Academic Press, New York-London 1970.

\bibitem[Gei17]{Gei17}
Geisser, Thomas H. On the structure of étale motivic cohomology. J. Pure Appl. Algebra 221 (2017), no. 7, 1614–1628.  
%\bibitem[GL01]{GL01}Geisser, Thomas; Levine, Marc. The Bloch-Kato conjecture and a theorem of Suslin-Voevodsky. J. Reine Angew. Math. 530 (2001), 55–103. 


%\bibitem[GR84]{GR84}Grauert, Hans; Remmert, Reinhold Coherent analytic sheaves. Grundlehren der Mathematischen Wissenschaften, 265. Springer-Verlag, Berlin, 1984.

%\bibitem[Hat02]{Hat02}Hatcher, Allen.Algebraic topology.  Cambridge University Press, Cambridge, 2002.

\bibitem[Jan15]{Jan15}
Jannsen, Uwe. Rigidity theorems for $\mathcal{K}$- and $\mathcal{H}$-cohomology and other functors. arXiv:1503.08742 (2015).

\bibitem[Kel13]{Kel13}
Kelly, Shane. Triangulated categories of motives in positive characteristic. arXiv:1305.5349 (2013).

\bibitem[Lec86]{Lec86}
Lecomte, Florence. Rigidit\'{e} des groupes de Chow, Duke Math. J. 53 (1986), no. 2, 405-426.

%\bibitem[Lev99]{Lev99}Levine, Marc. K-theory and motivic cohomology of schemes. preprint 166 (1999): 167. http://www.math.uiuc.edu/K-theory/.
%\bibitem[Lev01]{Lev01}Levine, Marc.Techniques of localization in the theory of algebraic cycles. J. Algebraic Geom. 10 (2001), no. 2, 299–363. 

\bibitem[May67]{May67}
May, J. Peter. Simplicial objects in algebraic topology. Van Nostrand Mathematical Studies, No. 11, 1967.

\bibitem[Ols16]{Ols16}
Olsson, Martin. Algebraic spaces and stacks. American Mathematical Society Colloquium Publications, 62. American Mathematical Society, Providence, RI, 2016.

%\bibitem[PW00]{PW00} Pedrini, Claudio; Weibel, Charles.The higher $K$-theory of complex varieties.Special issues dedicated to Daniel Quillen on the occasion of his sixtieth birthday, Part V. $K$-Theory 21 (2000), no. 4, 367–385. 

\bibitem[Rob96]{Rob96}Robinson, Derek J. S.
A course in the theory of groups. 
Second edition. Graduate Texts in Mathematics, 80. Springer-Verlag, New York, 1996.  

%\bibitem[Roi80]{Roi80}Rojtman, A. A. The torsion of the group of $0$-cycles modulo rational equivalence. Ann. of Math. (2) 111 (1980), no. 3, 553–569. 

%\bibitem[SGA1]{SGA1}Revêtements étales et groupe fondamental.  Séminaire de Géométrie Algébrique du Bois Marie 1960–1961. Dirigé par Alexandre Grothendieck. Augmenté de deux exposés de M. Raynaud. Lecture Notes in Mathematics, Vol. 224. Springer-Verlag, Berlin-New York, 1971.

\bibitem[SGA4]{SGA4}
Théorie des topos et cohomologie étale des schémas. Séminaire de Géométrie Algébrique du Bois-Marie 1963–1964. Dirigé par M. Artin, A. Grothendieck, et J. L. Verdier. Avec la collaboration de N. Bourbaki, P. Deligne et B. Saint-Donat.

%\bibitem[SS03]{SS03}Spieß, Michael; Szamuely, Tamás. On the Albanese map for smooth quasi-projective varieties.  Math. Ann. 325 (2003), no. 1, 1–17.	

\bibitem[SV96]{SV96}
Suslin, Andrei; Voevodsky, Vladimir.
Singular homology of abstract algebraic varieties. 
Invent. Math. 123 (1996), no. 1, 61-94. 

\bibitem[SV00]{SV00}
Suslin, Andrei; Voevodsky, Vladimir.
Relative cycles and Chow sheaves. Cycles, transfers, and motivic homology theories, 10–86, 
Ann. of Math. Stud., 143, Princeton Univ. Press, Princeton,  2000.

\bibitem[Voe96]{Voe96}
Voevodsky, Vladimir.
Homology of schemes. 
Selecta Math. (N.S.) 2 (1996), no. 1, 111–153.

%\bibitem[Voe00a]{Voe00a}Voevodsky, Vladimir. Cohomological theory of presheaves with transfers. Cycles, transfers, and motivic homology theories, 87–137,  Ann. of Math. Stud., 143, Princeton Univ. Press, Princeton,  2000.

%\bibitem[Voe00b]{Voe00b}Voevodsky, Vladimir. Triangulated categories of motives over a field. Cycles, transfers, and motivic homology theories, 188–238,  Ann. of Math. Stud., 143, Princeton Univ. Press, Princeton,  2000. 

%\bibitem[Wei94]{Wei94} Weibel, Charles A. An introduction to homological algebra. Cambridge Studies in Advanced Mathematics, 38. Cambridge University Press, Cambridge, 1994.
\end{thebibliography}

\begin{thebibliography}{WeilII}
\bibitem[ABB]{ABB} F. Andreatta, L. Barbieri-Viale,  A. Bertapelle {\it Motivic periods and Grothendieck arithmetic invariants} (with an appendix by B. Kahn), Adv. in Math. {\bf 359} (2020), \url{https://doi.org/10.1016/j.aim.2019.106880}.
\bibitem[Ar]{artin} M. Artin {\it Comparaison avec la cohomologie classique: cas d'un sch\'ema lisse}, {\it in} Th\'eorie des topos et cohomologie \'etale des sch\'emas (SGA 4), Vol. III, Lect. Notes in Math. {\bf 305}, Springer, 1973, 64--78.
\bibitem[Ay1]{ayoub} J. Ayoub {\it Note sur les op\'erations de Grothendieck et la r\'ealisation de Betti}, J. Inst. Math. Jussieu {\bf 9} (2010), 225--263.
\bibitem[Ay2]{realetale} J. Ayoub {\it La r\'ealisation \'etale et les op\'erations de Grothendieck}, Ann. Sci. \'Ec. Norm. Sup. {\bf  47} (2014), 1--145. 
\bibitem[BDD]{BDD} A. Beilinson, J. Bernstein, P. Deligne {\it Faisceaux pervers}, Ast\'erisque {\bf 100}, 1982.
\bibitem[CD]{CD} D-C. Cisinski, F. D\'eglise {\it \'Etale motives}, Compositio Math. {\bf 152} (2016) 556--666.
\bibitem[WeilII]{WeilII}  P. Deligne {\it La conjecture de Weil. II}, Publ. Math. Inst. Hautes Etudes Sci. {\bf 52} (1980) 137--252.
\bibitem[HK]{HK} A. Huber, B. Kahn {\it The slice filtration and mixed Tate motives}, Compos. Math. {\bf 142} (2006), 907--936.
\bibitem[Ke]{Ke} S. Kelly  Voevodsky motives and ldh-descent. Ast\'erisque {\bf 391} (2017).
\bibitem[MVW]{MVW} C. Mazza, V. Voevodsky, C. Weibel Lecture notes on motivic cohomology, Clay Math. Monographs {\bf 2}, AMS, 2006.
\bibitem[N]{neeman} A. Neeman {\it The Grothendieck duality theorem via Bousfield's techniques
and Brown representability}, J. AMS {\bf 9} (1996), 205--236.
\bibitem[V1]{voetri} V. Voevodsky {\it Triangulated categories of motives over a field}, {\it in} Cycles, transfers, and motivic homology theories (Voevodsky, Suslin, Friedlander), Ann. of Math. Studies {\bf 143}, 188--238. 
\bibitem[V2]{voecan} V. Voevodsky {\it Cancellation theorem}, Doc .Math. Extra volume: Andrei A. Suslin sixtieth
birthday, 2010,  671--685.
\end{thebibliography}
\end{document}